\def\tauetacond#1{$(\cO_{#1})$}   
\newtheorem{thm}{Theorem}[section]
\newtheorem{prp}[thm]{Proposition}
\newtheorem{lmm}[thm]{Lemma}
\newtheorem{crl}[thm]{Corollary}
\theoremstyle{definition}
\theoremstyle{remark}
\newtheorem{rmk}[thm]{Remark}
\numberwithin{equation}{section}
\def\lra{\longrightarrow}
\def\Lra{\Longrightarrow}
\def\BE#1{\begin{equation}\label{#1}}
\def\EE{\end{equation}}
\def\lr#1{\langle#1\rangle}
\def\blr#1{\big\langle#1\big\rangle}
\def\ti#1{\tilde{#1}}
\def\wt#1{\widetilde{#1}}
\def\ov#1{\overline{#1}}
\def\eref#1{(\ref{#1})}
\def\tn#1{\textnormal{#1}}
\def\sf#1{\textsf{#1}}
\def\sm#1{\begin{small}#1\end{small}}
\def\mr#1{\mathring{#1}}
\def\wch#1{\widecheck{#1}}
\def\De{\Delta}
\def\Ga{\Gamma}
\def\La{\Lambda}
\def\Om{\Omega}
\def\Si{\Sigma}
\def\Th{\Theta}
\def\al{\alpha}
\def\be{\beta}
\def\de{\delta}
\def\ga{\gamma}
\def\ka{\kappa}
\def\la{\lambda}
\def\om{\omega}
\def\th{\theta}
\def\ve{\varepsilon}
\def\ups{\upsilon}
\def\vp{\varpi}
\def\cB{\mathcal B}
\def\C{\mathbb C}
\def\cC{\mathcal C}
\def\bI{\mathbb I}
\def\fI{\mathfrak i}
\def\cJ{\mathcal J}
\def\cK{\mathcal K}
\def\cM{\mathcal M}
\def\fM{\mathfrak M}
\def\cN{\mathcal N}
\def\cO{\mathcal O}
\def\Q{\mathbb Q}
\def\P{\mathbb P}
\def\cP{\mathcal P}
\def\R{\mathbb{R}}
\def\fR{\mathfrak R}
\def\cU{\mathcal U}
\def\Y{\mathbf Y}
\def\Z{\mathbb{Z}}
\def\a{\mathbf a}
\def\fd{\mathfrak d}
\def\bh{\mathbf h}
\def\fs{\mathfrak s}
\def\y{\mathbf y}
\def\tnd{\textnormal{d}}
\def\eff{\tn{eff}}
\def\ev{\tn{ev}}
\def\id{\textnormal{id}}
\def\Im{\tn{Im}}
\def\ind{\textnormal{ind}}
\def\PD{\tn{PD}}
\def\PSL{\tn{PSL}}
\def\top{\textnormal{top}}
\def\vir{\tn{vir}}
\def\0{\mathbf 0}
\def\1{\mathbf 1}
\def\dbar{\bar\partial}
\def\prt{\partial}
\def\eset{\emptyset}
\def\i{\infty}
\def\bu{\bullet}
\begin{document}

\title{Enumeration of Real Curves in $\C\P^{2n-1}$ and\\
a WDVV Relation for Real Gromov-Witten Invariants}
\author{Penka Georgieva and 
Aleksey Zinger\thanks{Partially supported by  NSF grant DMS 0846978}}
\date{\today}
\maketitle

\begin{abstract}
\noindent
We establish a homology relation for the Deligne-Mumford moduli spaces of real curves
which lifts to a WDVV-type relation for
a class of real Gromov-Witten invariants of real symplectic manifolds;
we also obtain a vanishing theorem for these invariants.
For many real symplectic manifolds, these results reduce all genus~0 real invariants 
with conjugate pairs of constraints to genus~0 invariants with a single conjugate 
pair of constraints.
In particular, we give a complete recursion for counts of real rational curves 
in odd-dimensional projective spaces with conjugate pairs of constraints and 
specify all cases when they are nonzero and thus provide non-trivial lower bounds
in high-dimensional real algebraic geometry.
We also show that the real invariants of the three-dimensional projective space
with conjugate point constraints are congruent to their complex analogues modulo~4.
\end{abstract}

\tableofcontents

\section{Introduction}
\label{intro_sec}

\noindent
The classical problem of enumerating (complex) rational curves in a complex projective space~$\P^n$ 
is solved in \cite{KM,RT} using the WDVV relation of Gromov-Witten theory.
Over the past decade, significant progress has been made in real enumerative geometry 
and real Gromov-Witten theory.
Invariant signed counts of real rational curves with point constraints
in real surfaces and in many real threefolds are defined in~\cite{Wel1}
and~\cite{Wel2}, respectively.
An approach to interpreting these counts in the style of Gromov-Witten theory,
i.e.~as counts of parametrizations of such curves, is presented in \cite{Cho,Sol}. 
Signed counts of real curves with conjugate pairs of arbitrary (not necessarily point)
constraints in arbitrary dimensions are defined in~\cite{Ge2} and extended
to more general settings in~\cite{Teh}.
Two different WDVV-type relations for the real Gromov-Witten invariants 
of real surfaces as defined in \cite{Cho,Sol}, along with the ideas behind them, 
are stated in~\cite{Sol2};
they yield complete recursions for counts of real rational curves in~$\P^2$ 
as defined in~\cite{Wel1}.
Other recursions for counts of real curves in some real surfaces have 
since been established by completely different methods in \cite{IKS09,ABL,IKS13a,IKS13b}.\\

\noindent
In this paper, we establish a homology relation between geometric classes
on the Deligne-Mumford moduli space $\R\ov\cM_{0,3}$ of real genus~0 curves
with 3~conjugate pairs of marked points    
and use it to obtain a WDVV-type relation for the real Gromov-Witten invariants 
of \cite{Ge2,Teh}; see Proposition~\ref{DMrelR_prp} and Theorem~\ref{main2_thm}.
This relation yields a complete recursion for counts of real rational curves with 
conjugate pairs of arbitrary constraints in~$\P^{2n-1}$;
see Theorem~\ref{main_thm} and Corollary~\ref{main0_crl}.
It is sufficiently simple to characterize the cases when these invariants are nonzero
and thus the existence of real rational curves passing through specified types of constraints
is guaranteed; see Corollary~\ref{oddnums_crl}.
We also show that the real genus~0 Gromov-Witten invariants of~$\P^3$
with conjugate pairs of point constraints are congruent to their complex analogues
modulo~4, as expected for the real curve counts of~\cite{Wel1,Wel2},
and that this congruence does not persist in higher dimensions or with other types
of constraints; see Corollary~\ref{main_crl} and the paragraph right after~it.\\

\noindent
Each odd-dimensional projective space~$\P^{2n-1}$ has two standard anti-holomorphic involutions
(automorphisms of order~2):
\begin{alignat}{2}
\label{taudfn_e}
\tau_{2n}\!:\P^{2n-1}&\lra\P^{2n-1}, &\quad
[z_1,\ldots,z_{2n}]&\lra [\bar{z}_2,\bar{z}_1,\ldots,\bar{z}_{2n},\bar{z}_{2n-1}],\\
\label{etadfn_e}
\eta_{2n}\!:\P^{2n-1}&\lra\P^{2n-1}, &\quad
[z_1,\ldots,z_{2n}]&\lra [-\bar{z}_2,\bar{z}_1,\ldots,-\bar{z}_{2n},\bar{z}_{2n-1}].
\end{alignat}
The fixed locus of the first involution is~$\R\P^{2n-1}$, while 
the fixed locus of the second involution is empty.
Let 
$$\tau\!=\!\tau_2,\,\eta\!=\!\eta_2: \P^1\lra\P^1\,.$$
For $\phi\!=\!\tau_{2n},\eta_{2n}$ and $c\!=\!\tau,\eta$, 
a map $u\!:\P^1\!\lra\!\P^{2n-1}$ is \sf{$(\phi,c)$-real} if
$u\!\circ\!c\!=\!\phi\!\circ\!u$.
For $k\!\in\!\Z^{\ge0}$, a \sf{$k$-marked $(\phi,c)$-real map} is a tuple
$$\big(u,(z_1^+,z_1^-),\ldots,(z_k^+,z_k^-)\big),$$
where $z_1^+,z_1^-,\ldots,z_k^+,z_k^-\!\in\!\P^1$ are distinct points
with $z_i^+\!=\!c(z_i^-)$  and $u$ is a $(\phi,c)$-real map.
Such a tuple is \sf{$c$-equivalent} to another $k$-marked $(\phi,c)$-real map
$$\big(u',(z_1'^+,z_1'^-),\ldots,(z_k'^+,z_k'^-)\big)$$
if there exists a biholomorphic map $h\!:\P^1\!\lra\!\P^1$ such that 
$$h\!\circ\!c=c\!\circ\!h, \qquad u'\!=\!u\!\circ\!h, \quad\hbox{and}\quad
z_i^{\pm}\!=\!h(z_i'^{\pm})~~\forall~i\!=\!1,\ldots,k.$$

\vspace{.2in}

\noindent
If in addition $d\!\in\!\Z^+$, denote~by 
$$\fM_{0,k}(\P^{2n-1},d)^{\phi,c}\subset\ov\fM_{0,k}(\P^{2n-1},d)^{\phi,c} $$ 
the moduli space of $c$-equivalence classes of
$k$-marked degree~$d$  holomorphic $(\phi,c)$-real maps
and its natural compactification consisting of stable real maps from nodal domains.
As in \cite[Section~3]{Teh}, let
\BE{gluedsp_e}\ov\fM_{0,k}(\P^{2n-1},d)^{\phi} \equiv
\ov\fM_{0,k}(\P^{2n-1},d)^{\phi,\tau}\cup\ov\fM_{0,k}(\P^{2n-1},d)^{\phi,\eta}\EE
be the space obtained by identifying the two moduli spaces on the right-hand side
 along their common boundary.
The glued space has no codimension~1 boundary.\\

\noindent
By \cite[Lemma~1.9]{Teh} and its~proof,
\begin{gather*}
\ov\fM_{0,k}(\P^{2n-1},d)^{\tau_{2n},\eta}=\eset~~\forall\,d\!\not\in\!2\Z\,,\\
\ov\fM_{0,k}(\P^{2n-1},d)^{\eta_{2n},\eta}=\eset~~\forall\,d\!\in\!2\Z\,, \quad
\ov\fM_{0,k}(\P^{2n-1},d)^{\eta_{2n},\tau}=\eset~~\forall\,d\in\Z\,.
\end{gather*}
By \cite[Theorem~6.5]{Ge2}, $\ov\fM_{0,k}(\P^{2n-1},d)^{\tau_{2n},\tau}$
is orientable for every $d\!\in\!\Z$.
By \cite[Section~5.2]{Teh}, the~spaces
$$\ov\fM_{0,k}(\P^{2n-1},d)^{\tau_{2n},\eta}~~\hbox{with}~d\!\in\!2\Z
\quad\hbox{and}\quad
\ov\fM_{0,k}(\P^{2n-1},d)^{\eta_{2n},\eta}~~\hbox{with}~d\!\not\in\!2\Z$$
are orientable as well.
If $\phi\!=\!\tau_{2n}$ and $d\!\in\!2\Z$,
the orientations on the two moduli spaces on the right-hand side of~\eref{gluedsp_e}
can be chosen so that they extend across the common boundary; see \cite[Proposition~5.5]{Teh}.
In the remaining three cases, at most one of the spaces on the right-hand side of~\eref{gluedsp_e}
is not empty. 
Thus, the glued moduli space~\eref{gluedsp_e} is orientable and carries a fundamental class.\\

\noindent
The glued compactified moduli spaces come with natural evaluation maps
$$\ev_i\!: \ov\fM_{0,k}(\P^{2n-1},d)^{\phi}\lra\P^{2n-1},
\qquad 
\big[u,(z_1^+,z_1^-),\ldots,(z_k^+,z_k^-)\big]\lra u(z_i^+).$$
For $c_1,\ldots,c_k\!\in\!\Z^+$, we define
\BE{realNdfn_e} 
\lr{c_1,\ldots,c_k}_d^{\phi}=\int_{\ov\fM_{0,k}(\P^{2n-1},d)^{\phi}}
\ev_1^*H^{c_1}\,\ldots\,\ev_k^*H^{c_k}\in\Z\,,\EE
where $H\!\in\!H^2(\P^{2n-1})$ is the hyperplane class.
For dimensional reasons,
\BE{codimcond_e} \lr{c_1,\ldots,c_k}_d^{\phi}\neq0
\qquad\Lra\qquad c_1+\ldots+c_k=n(d\!+\!1) -2+k\,. \EE
Similarly to \cite[Lemma~10.1]{RT}, the numbers~\eref{realNdfn_e} are enumerative counts
of real curves in~$\P^{2n-1}$, i.e.~of curves preserved by~$\phi$,
but now with some sign.
They are invariant under the permutations of the insertions and satisfy 
the usual divisor relation,
\BE{RdivRel_e}\lr{c_1,\ldots,c_k,1}_d^{\phi}=d\,\lr{c_1,\ldots,c_k}_d^{\phi}\,.\EE
The latter holds because the fiber of the forgetful morphism
$$\ov\fM_{0,k+1}(\P^{2n-1},d)^{\phi}\lra\ov\fM_{0,k}(\P^{2n-1},d)^{\phi}$$
is oriented by $z_{k+1}^+$
 and every degree~$d$ curve in~$\P^{2n-1}$ meets 
a generic hyperplane in $d$~points.\\

\noindent
By \cite[Theorem~1.10]{Teh} and \cite[Remark~1.11]{Teh}, the numbers~\eref{realNdfn_e} with 
$\phi\!=\!\tau_{2n},\eta_{2n}$ vanish if either $d$ or any $c_i$ is even;
see also Corollary~\ref{inveqCI_crl}\ref{CIvan_it}.
By \cite[Remark~1.11]{Teh} and Corollary~\ref{inveqCI_crl}\ref{CIred_it},
\BE{taueta_e}\lr{c_1,\ldots,c_k}_d^{\tau_{2n}}=\pm \lr{c_1,\ldots,c_k}_d^{\eta_{2n}}\,;\EE
the sign depends on the orientations of 
$\ov\fM_{0,k}(\P^{2n-1},d)^{\tau_{2n}}$ and $\ov\fM_{0,k}(\P^{2n-1},d)^{\eta_{2n}}$.
Systems of such orientations, compatible with the recursion of Theorem~\ref{main_thm}
for~$\P^{2n-1}$ and the WDVV-type relation of Theorem~\ref{main2_thm} for more general 
real symplectic manifolds, are described in Section~\ref{mainthm_sec}.
They ensure a fixed sign in~\eref{taueta_e} and can be specified by 
choosing the sign of the $d\!=\!1$ numbers in~\eref{realNdfn_e}. 

\begin{rmk}
The orientations for the $\tau_{4n}$ and~$\eta_{4n}$
moduli spaces are determined by a spin structure on~$\R\P^{4n-1}$
and a real square root of the canonical line bundle~$\cK_{\P^{4n-1}}$
of~$\P^{4n-1}$, respectively.
On the other hand, $\R\P^{4n+1}$ does not admit a spin structure, while 
$\cK_{\P^{4n+1}}$ does not admit a real square root.
A relatively spin structure on~$\R\P^{4n+1}$
does not provide a system of orientations compatible with 
the recursion of Theorem~\ref{main_thm},
because such a system is not compatible with smoothing a conjugate pair of nodes,
as needed for the statement of Lemma~\ref{Rgluing0_lmm};
see Remark~\ref{relspin_rmk} for more details.
\end{rmk}

\noindent
For any $d,c_1,\ldots,c_k\!\in\!\Z^+$, let
$$\blr{c_1,\ldots,c_k}_d^{\P^{2n-1}}=\int_{\ov\fM_{0,k}(\P^{2n-1},d)}
\ev_1^*H^{c_1}\,\ldots\,\ev_k^*H^{c_k}\in\Z^{\ge0}\,,$$
where $\ov\fM_{0,k}(\P^{2n-1},d)$ is the usual moduli space of stable (complex) 
$k$-marked genus~0 degree~$d$ holomorphic maps to~$\P^{2n-1}$,
denote the (complex) genus~0 Gromov-Witten invariants of~$\P^{2n-1}$;
they are computed in \cite[Theorem~10.4]{RT}.
Finally, if $c_1,\ldots,c_k\!\in\!\Z$ and $I\!\subset\!\{1,\ldots,k\}$,
let $c_I$ denote a tuple with the entries $c_i$ with $i\!\in\!I$,
in some order.

\begin{thm}\label{main_thm}
Let $\phi\!=\!\tau_{2n},\eta_{2n}$ and $d,k,n,c,c_1,\ldots,c_k\!\in\!\Z^+$.
If $k\!\ge\!2$ and $c_1,\ldots,c_k\!\not\in\!2\Z$,
\begin{equation*}\begin{split}
\blr{c_1,c_2\!+\!2c,c_3,\ldots,c_k}_d^{\phi}
-\blr{c_1\!+\!2c,c_2,c_3,\ldots,c_k}_d^{\phi}
=\sum_{\begin{subarray}{c}2d_1+d_2=d\\ d_1,d_2\ge1 \end{subarray}}
\sum_{I\sqcup J=\{3,\ldots,k\}}
\sum_{\begin{subarray}{c}2i+j=2n-1\\ i,j\ge1 \end{subarray}}\!\!\!\!\!\!
2^{|I|}\Bigg(\qquad&\\
\blr{2c,c_1,c_I,2i}_{d_1}^{\P^{2n-1}}\!\blr{c_2,c_J,j}_{d_2}^{\phi}
-\blr{2c,c_2,c_I,2i}_{d_1}^{\P^{2n-1}}\!\blr{c_1,c_J,j}_{d_2}^{\phi}&\Bigg).
\end{split}\end{equation*}
\end{thm}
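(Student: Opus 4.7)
The plan is to derive Theorem~\ref{main_thm} as a specialization of the general WDVV-type relation stated in Theorem~\ref{main2_thm}, applied to the real symplectic manifold $(\P^{2n-1},\phi)$ with $\phi\in\{\tau_{2n},\eta_{2n}\}$. This parallels the way the classical WDVV relation on $\ov\cM_{0,4}$ yields the Kontsevich-Manin recursion for complex Gromov-Witten invariants of projective space, with $\R\ov\cM_{0,3}$ (three conjugate pairs of marked points) playing the role of $\ov\cM_{0,4}$ and the homology relation of Proposition~\ref{DMrelR_prp} replacing the classical four-point relation.

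I would apply Theorem~\ref{main2_thm} with three distinguished conjugate pairs on the source labelled by the ``probe'' classes $H^{c_1}$, $H^{c_2}$, $H^{2c}$, and the remaining $k-2$ pairs carrying $H^{c_3},\ldots,H^{c_k}$. The left-hand side of Theorem~\ref{main2_thm} is then a difference of two integrals over $\ov\fM_{0,k}(\P^{2n-1},d)^\phi$ in which $H^{2c}$ is cup-multiplied with $H^{c_2}$ in one and with $H^{c_1}$ in the other. Using the ring relation $H^a\cdot H^b=H^{a+b}$ in $H^*(\P^{2n-1})$, this difference collapses to
\begin{equation*}
\blr{c_1,c_2+2c,c_3,\ldots,c_k}_d^\phi - \blr{c_1+2c,c_2,c_3,\ldots,c_k}_d^\phi,
\end{equation*}
the left-hand side of Theorem~\ref{main_thm}.

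The right-hand side comes from the boundary contributions in Theorem~\ref{main2_thm}. These correspond to nodal real genus-$0$ maps whose normalizations split into a conjugate pair of $\P^1$-components of total degree $2d_1$ glued to a real component of degree $d_2$ (with $2d_1+d_2=d$); any boundary contributions in which both components are real vanish by the parity vanishing Theorem~\ref{evenvan_thm}, since $c_1,\ldots,c_k$ are all odd. Splitting the insertions across the two sides and summing over the diagonal $\sum_i H^i\otimes H^{2n-1-i}$ in $H^*(\P^{2n-1})^{\otimes 2}$ at the pair of nodes yields terms of the form $\blr{2c,c_\ast,c_I,2i}_{d_1}^{\P^{2n-1}}\blr{c_{\ast\ast},c_J,j}_{d_2}^\phi$ with $\{\ast,\ast\ast\}=\{1,2\}$; the restriction to \emph{even} exponent $2i$ at the node on the complex side and \emph{odd} exponent $j$ at the node on the real side is again forced by Theorem~\ref{evenvan_thm}. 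The combinatorial factor $2^{|I|}$ arises because each of the $|I|$ marked points on the conjugate-pair side may be placed on either of the two $\P^1$'s, the position on the conjugate component being determined by the involution.

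The main technical obstacle will be bookkeeping: correctly matching the sign conventions on the various real moduli spaces (as specified in Section~\ref{mainthm_sec}) through each boundary contribution, identifying precisely which degenerations survive the parity vanishing, and confirming that the factor $2^{|I|}$ emerges with the right sign after accounting for the $\Z/2$-action exchanging the two components of each conjugate pair. Once these are pinned down, Theorem~\ref{main_thm} should follow by inspection from the statement of Theorem~\ref{main2_thm}.
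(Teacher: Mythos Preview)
Your approach is correct and is precisely the paper's: Theorem~\ref{main_thm} is explicitly stated there to be the specialization of Theorem~\ref{main2_thm} to $(\P^{2n-1},\phi)$ with $\mu=H^{2c}$, $\mu_i=H^{c_i}$, and the dual bases $\ga_i=H^i$, $\ga^i=H^{2n-1-i}$, and your identification of both sides is accurate. One minor correction: there are no ``both components real'' boundary terms to rule out---the right-hand side of Theorem~\ref{main2_thm} already involves only the three-component (one real component, one conjugate pair) degenerations, and the restriction to odd~$j$ at the real node is built into its statement via the condition $\ga^i\in H^{2*}(X)^{\phi}_-$, so no further appeal to Theorem~\ref{evenvan_thm} is needed once Theorem~\ref{main2_thm} is taken as input.
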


\begin{crl}\label{main0_crl}
Let $\phi\!=\!\tau_{2n},\eta_{2n}$ and $d,k,n,c_1,\ldots,c_k\!\in\!\Z^+$.
If $d\!\in\!2\Z$ or $c_i\!\in\!2\Z$ for some~$i$,
$$\blr{c_1,c_2,\ldots,c_k}_d^{\phi}=0.$$
If $k\!\ge\!2$ and $c_1,\ldots,c_k\!\not\in\!2\Z$,
\begin{equation*}\begin{split}
\blr{c_1,c_2,c_3,\ldots,c_k}_d^{\phi}
=d\blr{c_1\!+\!c_2\!-\!1,c_3,\ldots,c_k}_d^{\phi}
+\sum_{\begin{subarray}{c}2d_1+d_2=d\\ d_1,d_2\ge1 \end{subarray}}
\sum_{I\sqcup J=\{3,\ldots,k\}}
\sum_{\begin{subarray}{c}2i+j=2n-1\\ i,j\ge1 \end{subarray}}\!\!\!\!\!\!
2^{|I|}\Bigg(\qquad&\\
d_2\blr{c_1\!-\!1,c_2,c_I,2i}_{d_1}^{\P^{2n-1}}\!\blr{c_J,j}_{d_2}^{\phi}
-d_1\blr{c_1\!-\!1,c_I,2i}_{d_1}^{\P^{2n-1}}\!\blr{c_2,c_J,j}_{d_2}^{\phi}&\Bigg).
\end{split}\end{equation*}
\end{crl}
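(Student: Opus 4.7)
The plan is to derive both parts from results already available together with a simple algebraic rearrangement, without any new geometric input. The vanishing assertion requires no extra work: the paragraph preceding~\eref{taueta_e} already records that \cite[Theorem~1.8]{Teh} (restated later as Theorem~\ref{evenvan_thm}) together with Corollary~\ref{inveqCI_crl} implies $\blr{c_1,\ldots,c_k}_d^{\phi}\!=\!0$ whenever $d$ or some $c_i$ is even.

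For the recursion, I specialize the identity of Theorem~\ref{main_thm} and invoke the divisor axiom, which is classical for the complex invariants $\blr{\cdots}_d^{\P^{2n-1}}$ and is stated in the paragraph following~\eref{codimcond_e} for the real invariants $\blr{\cdots}_d^{\phi}$. Assuming first that $c_1\!\ge\!3$, I apply Theorem~\ref{main_thm} with the tuple $(c_1,c_2,c_3,\ldots,c_k)$ replaced by $(1,c_2,c_3,\ldots,c_k)$ and with $2c\!:=\!c_1\!-\!1\!\in\!2\Z^+$; the parity hypothesis of the theorem is preserved because $1,c_2,\ldots,c_k$ are all odd. The left-hand side of Theorem~\ref{main_thm} becomes
$$\blr{1,c_1\!+\!c_2\!-\!1,c_3,\ldots,c_k}_d^{\phi}-\blr{c_1,c_2,c_3,\ldots,c_k}_d^{\phi},$$
and the divisor relation rewrites the first term as $d\blr{c_1\!+\!c_2\!-\!1,c_3,\ldots,c_k}_d^{\phi}$. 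On the right-hand side, the same relation converts $\blr{c_1\!-\!1,1,c_I,2i}_{d_1}^{\P^{2n-1}}$ into $d_1\blr{c_1\!-\!1,c_I,2i}_{d_1}^{\P^{2n-1}}$ and $\blr{1,c_J,j}_{d_2}^{\phi}$ into $d_2\blr{c_J,j}_{d_2}^{\phi}$. Solving the resulting equation for $\blr{c_1,c_2,c_3,\ldots,c_k}_d^{\phi}$ yields precisely the claimed formula.

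The borderline case $c_1\!=\!1$ falls outside the range of Theorem~\ref{main_thm} since $c_1\!-\!1\!=\!0$ is not positive, but the corollary's equation still holds by direct inspection. Its left-hand side equals $d\blr{c_2,\ldots,c_k}_d^{\phi}\!=\!d\blr{c_1\!+\!c_2\!-\!1,c_3,\ldots,c_k}_d^{\phi}$ by the divisor relation, matching the first summand on the right, while every remaining summand contains a factor $\blr{0,\ldots}_{d_1}^{\P^{2n-1}}$ with $d_1\!\ge\!1$, which vanishes by the fundamental-class (string) axiom of classical Gromov-Witten theory. Since the whole argument reduces to a substitution into Theorem~\ref{main_thm} followed by the divisor axiom, there is no serious technical obstacle to expect; the only points deserving attention are the verification of the parity hypothesis after the substitution and the handling of the $c_1\!=\!1$ edge case just described.
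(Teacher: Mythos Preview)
Your proof is correct and is precisely the argument the paper has in mind: the text states that Theorem~\ref{main_thm} ``immediately implies the recursion of Corollary~\ref{main0_crl}'', and your specialization $(c_1,c_2,\ldots,c_k)\to(1,c_2,\ldots,c_k)$ with $2c=c_1\!-\!1$ followed by the divisor relation is exactly how that implication is realized. Your separate treatment of the $c_1\!=\!1$ edge case via the fundamental-class axiom is the right way to close the gap left by the hypothesis $c\!\in\!\Z^+$ in Theorem~\ref{main_thm}.
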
 

\begin{crl}\label{oddnums_crl}
Let $\phi\!=\!\tau_{2n},\eta_{2n}$ and $d,k,n,c_1,\ldots,c_k\!\in\!\Z^+$ with 
$$c_1\!+\!\ldots\!+\!c_k= n(d\!+\!1)-2+k \qquad\hbox{and}\qquad
c_1,\ldots,c_k\le2n\!-\!1.$$
\begin{enumerate}[label=(\arabic*),leftmargin=*]
\item If $d,c_1,\ldots,c_k$ are odd, then
so is $\lr{c_1,c_2,\ldots,c_k}_d^{\phi}$.
\item\label{vancrl_it} 
The signed number $\lr{c_1,c_2,\ldots,c_k}_d^{\phi}$ of degree~$d$ real curves 
in~$\P^{2n-1}$ passing through general complex linear subspaces of 
codimensions~$c_1,\ldots,c_k$ is zero if and only if either $d\!\in\!2\Z$
or $c_i\!\in\!2\Z$ for some~$i$.\\
\end{enumerate}
\end{crl}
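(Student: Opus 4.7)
The plan is to reduce part~(2) to part~(1) and then prove~(1) by strong induction using the recursion of Corollary~\ref{main0_crl}. The vanishing direction of~(2) (that $\blr{c_1,\ldots,c_k}_d^\phi=0$ when $d\in 2\Z$ or some $c_i\in 2\Z$) is precisely the first clause of Corollary~\ref{main0_crl}; the nonvanishing direction (when all are odd) follows from~(1), since an odd integer is never zero. Thus the core task is to establish~(1).

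For~(1), I would run strong induction on $(d,k)$ in lexicographic order, feeding in the recursion of Corollary~\ref{main0_crl}. The essential mod-$2$ simplification is that the factor $2^{|I|}$ annihilates every summand with $|I|\ge 1$, so
\[
\blr{c_1,c_2,c_3,\ldots,c_k}_d^{\phi} \equiv \blr{c_1+c_2-1,c_3,\ldots,c_k}_d^{\phi} + \Sigma_0 \pmod 2,
\]
where $\Sigma_0$ is the portion of the sum with $I=\eset$. Every real invariant appearing in $\Sigma_0$ has degree $d_2<d$ (since $2d_1+d_2=d$ with $d_1\ge 1$ and $d$ odd forces $d_2$ to be odd and strictly less than $d$), retains odd codimensions (with $j=2n-1-2i$ odd) and odd degree, and so is odd by the outer induction hypothesis. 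Moreover, one may freely reorder the arguments of a real invariant by symmetry, so when selecting which two play the role of $(c_1,c_2)$ we may choose any pair.

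For the base case $d=1$, the sum $\Sigma_0$ is empty (as $2d_1+d_2=1$ has no solution with $d_1,d_2\ge 1$) and the recursion collapses to $\blr{c_1,\ldots,c_k}_1^\phi = \blr{c_1+c_2-1,c_3,\ldots,c_k}_1^\phi$. The dimensional identity $\sum c_i=2n-2+k$ together with $c_i\ge 1$ guarantees $c_1+c_2-1\le 2n-1$ at every iteration, so after $k-1$ steps we arrive at $\blr{2n-1}_1^\phi=\pm 1$, which is odd. For the inductive step $d\ge 3$, the first summand $\blr{c_1+c_2-1,c_3,\ldots,c_k}_d^{\phi}$ is odd by the inner induction on~$k$ whenever $c_1+c_2\le 2n$ (after reordering arguments to achieve this when possible), and vanishes when $c_1+c_2>2n$ because $H^{c_1+c_2-1}=0$ in $H^{*}(\P^{2n-1})$; the oddness of the right-hand side must then come from an appropriate parity of $\Sigma_0$.

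The main obstacle is the parity analysis of $\Sigma_0\pmod 2$, because $\Sigma_0$ also involves the complex Gromov-Witten invariants $\blr{c_1-1,c_2,2i}_{d_1}^{\P^{2n-1}}$ and $\blr{c_1-1,2i}_{d_1}^{\P^{2n-1}}$, whose parities are not directly governed by the induction hypothesis. These must be tracked by running a parallel mod-$2$ analysis via the standard complex WDVV relation of~\cite{KM,RT}. The most delicate situation is when every $c_i\ge n+1$, for then no reordering yields $c_1+c_2\le 2n$, the first summand vanishes identically, and the oddness of $\blr{c_1,\ldots,c_k}_d^\phi$ is carried entirely by $\Sigma_0$; in this regime both the sign structure of the difference and the parities of the complex GW pieces must be carefully accounted for.
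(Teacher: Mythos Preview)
Your setup is correct and matches the paper's: the reduction of~(2) to~(1), the mod-$2$ simplification via $2^{|I|}$, and the double induction on $(d,k)$. The gap is in your treatment of~$\Sigma_0$. You flag the complex invariants $\blr{c_1\!-\!1,c_2,2i}_{d_1}^{\P^{2n-1}}$ and $\blr{c_1\!-\!1,2i}_{d_1}^{\P^{2n-1}}$ as the ``main obstacle'' and propose tracking their parity via a parallel mod-$2$ analysis of the complex WDVV relation. That detour is unnecessary, and it is not clear it would terminate: the missing observation is that almost all of these complex invariants \emph{vanish outright for dimensional reasons}. Since $c_1,c_2\le 2n\!-\!1$ and $2i\le 2n\!-\!2$, one has $(c_1\!-\!1)+2i\le 4n\!-\!4 < 2nd_1+2n\!-\!2$ for every $d_1\ge 1$, so the two-point complex invariant is identically zero; likewise $(c_1\!-\!1)+c_2+2i\le 6n\!-\!5 < 2nd_1+2n\!-\!1$ for $d_1\ge 2$, so the three-point complex invariant vanishes for $d_1\ge 2$. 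Thus $\Sigma_0$ collapses to the single $d_1=1$ term
\[
\Sigma_0\ \equiv\ \sum_{\substack{2i+j=2n-1\\ i,j\ge 1}}
\blr{c_1\!-\!1,c_2,2i}_1^{\P^{2n-1}}\,\blr{c_3,\ldots,c_k,j}_{d-2}^{\phi}\pmod 2.
\]

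With this in hand your two cases finish cleanly. If $c_1+c_2\le 2n$, the remaining three-point degree-$1$ invariant still vanishes (the required codimension $2i=4n-c_1-c_2\ge 2n$ exceeds the allowed range), so $\Sigma_0\equiv 0$ and the first summand carries the oddness by induction on~$k$. If $c_1+c_2>2n$ (which forces $d\ge 3$, as you already noted for $d=1$), the first summand vanishes and exactly one value $2i=4n-c_1-c_2$ survives; the corresponding invariant $\blr{c_1\!-\!1,c_2,2i}_1^{\P^{2n-1}}$ counts lines in $\P^{2n-1}$ meeting three general linear subspaces of complementary total codimension, which is~$1$ by elementary projective geometry (take the span of the first two and intersect with the third). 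Hence $\Sigma_0\equiv\blr{c_3,\ldots,c_k,\,c_1\!+\!c_2\!-\!2n\!-\!1}_{d-2}^{\phi}$, odd by induction on~$d$. No reordering of the $c_i$ is needed, and no auxiliary complex WDVV parity argument enters.
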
 

\noindent
The formula of Theorem~\ref{main_thm}, which is a special case of Theorem~\ref{main2_thm}, 
can be seen as a real version of \cite[Theorem~1]{LP}.
Along with~\eref{RdivRel_e}, it immediately implies the recursion of Corollary~\ref{main0_crl}.
The vanishing statement in this corrollary is the $\ell\!=\!0$ case of  
Corollary~\ref{inveqCI_crl}\ref{CIvan_it}.
Corollary~\ref{main0_crl}  reduces all numbers $\lr{c_1,c_2,\ldots,c_k}_d^{\phi}$, 
with $\phi\!=\!\tau_{2n},\eta_{2n}$, to the single number
$\blr{2n\!-\!1}_1^{\phi}$,
i.e.~the number of $\phi$-real lines through a non-real point in~$\P^{2n-1}$.
The absolute value of this number is of course~1,
and we can choose a system of orientations so that $\blr{2n\!-\!1}_1^{\phi}\!=\!1$.
Taking $d\!=\!1$ in Corollary~\ref{main0_crl}, we~obtain
$$\blr{c_1,\ldots,c_k}_1^{\phi}=\blr{2n\!-\!1}^{\phi}_1=1$$
whenever $c_1,\ldots,c_k\!\in\!\Z^+$ are odd and $c_1\!+\!\ldots\!+\!c_k\!=\!2n\!-\!2\!+\!k$;
this conclusion agrees with \cite[Example~6.3]{Teh}.
Some other numbers obtained from Corollary~\ref{main0_crl} are shown in 
Tables~\ref{P3nums_tbl} and~\ref{Pnnums_tbl}; 
the degree~3 and~5 numbers in the former agree with~\cite{Teh}.\\

\noindent
Corollary~\ref{oddnums_crl} is deduced from Corollary~\ref{main0_crl} 
in Section~\ref{quant_sec}.
It can be equivalently viewed as a statement about the parity of 
the usual counts of genus~0 curves in~$\P^{2n-1}$ with certain types of constraints
(they must come in pairs of the same codimension).
The standard WDVV recursion for counts of complex curves is not closed under 
the relevant restriction on the constraints.
We do not see how to recover Corollary~\ref{oddnums_crl} directly from~it. \\

\noindent
In the case $\P^{2n-1}\!=\!\P^3$, the only interesting non-real constraints for 
the real genus~0 counts are points.
Let
\BE{signcorr_e}N_d^{\R}
=(-1)^{\frac{d-1}{2}}\blr{\underset{d}{\underbrace{3,\ldots,3}}}_d^{\tau_{2n}}
=(-1)^{\frac{d-1}{2}}\blr{\underset{d}{\underbrace{3,\ldots,3}}}_d^{\eta_{2n}}\EE
be the number of degree~$d$ real rational curves through $d$ non-real points in~$\P^3$ 
counted with sign;
by Corollary~\ref{oddnums_crl}\ref{vancrl_it}, $N_d^{\R}$ is well-defined even 
if $d\!\in\!2\Z$.
Denote~by
$$N_d^{\C}=\blr{\underset{2d}{\underbrace{3,\ldots,3}}}_d^{\P^{2n-1}}
\qquad\hbox{and}\qquad
\wt{N}_d^{\C}=\blr{2,2,\underset{2d-1}{\underbrace{3,\ldots,3}}}_d^{\P^{2n-1}}$$
the number of degree~$d$ (complex) rational curves through $2d$ points in~$\P^3$
and the number of degree~$d$ rational curves 
through 2 lines and $2d\!-\!1$ points in~$\P^3$,
respectively.
The next corollary is also obtained in Section~\ref{quant_sec}.

\begin{crl}\label{main_crl}
If $d\!\in\!\Z^+$ and $d\!\ge\!2$, then
$$N_d^{\R}= \sum_{\begin{subarray}{c}2d_1+d_2=d\\ d_1,d_2\ge1 \end{subarray}}
\!\!\!(-4)^{d_1-1}d_2\binom{d\!-\!2}{d_2\!-\!1}\wt{N}_{d_1}^{\C}N_{d_2}^{\R}\,,
\qquad
N_d^{\R}\cong_4  N_d^{\C}\cong_4  
\begin{cases}
1,&\hbox{if}~d\!\in\!\Z^+\!-\!2\Z;\\
0,&\hbox{if}~d\!\in\!2\Z^+;
\end{cases}$$
where $\cong_4$ denotes the congruence modulo~4.
\end{crl}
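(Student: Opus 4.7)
The plan is to specialize Corollary~\ref{main0_crl} to $\P^3$ with point constraints, obtain the recursion, and then extract the mod-4 behavior from the factor $(-4)^{d_1-1}$.

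\textbf{Recursion.} I specialize Corollary~\ref{main0_crl} to $n=2$, $k=d$, and $c_1=\ldots=c_d=3$. The divisor-type first term $d\langle 5,3^{d-2}\rangle_d^\phi$ vanishes because $H^5=0$ in $H^\ast(\P^3)$. In the remaining triple sum, $2i+j=3$ with $i,j\ge 1$ forces $(i,j)=(1,1)$, and dimension counting on $\ov\fM_{0,\ast}(\P^3,d_1)$ pins down $|I|=2d_1-2$ in the $A$-term and $|I|=2d_1-1$ in the $B$-term. In both cases the complex factor has two $H^2$ insertions together with $2d_1-1$ copies of $H^3$, hence equals $\wt N_{d_1}^\C$; the divisor equation turns both real factors into $d_2\langle 3^{d_2}\rangle_{d_2}^\phi$. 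The coefficient identity
\begin{equation*}
\binom{d-2}{d_2}d_2-2d_1\binom{d-2}{d_2-1}=-\binom{d-2}{d_2-1}
\end{equation*}
(via $d_2\binom{d-2}{d_2}=(2d_1-1)\binom{d-2}{d_2-1}$) collapses the $A-B$ combination to a single term. Converting each $\langle 3^{d_2}\rangle_{d_2}^\phi$ back through \eqref{signcorr_e} and combining with the overall $(-1)^{(d-1)/2}$ produces the claimed prefactor $(-4)^{d_1-1}$; the sign bookkeeping is unambiguous because $d_2$ must be odd whenever $N_{d_2}^\R\ne 0$.

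\textbf{Congruence for $N_d^\R$.} Modulo~4, the factor $(-4)^{d_1-1}$ annihilates every summand with $d_1\ge 2$, leaving only the $d_1=1$, $d_2=d-2$ contribution. Using $\wt N_1^\C=1$ (the unique line through a non-real point meeting two skew lines), this gives
\begin{equation*}
N_d^\R\equiv (d-2)\binom{d-2}{d-3}\wt N_1^\C\,N_{d-2}^\R\equiv (d-2)^2 N_{d-2}^\R\pmod 4.
\end{equation*}
For odd $d\ge 3$, $(d-2)^2\equiv 1\pmod 8$, so $N_d^\R\equiv N_{d-2}^\R\pmod 4$; the base case $N_1^\R=1$ and induction yield $N_d^\R\equiv 1\pmod 4$. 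For even $d$, $N_d^\R=0$ by Corollary~\ref{oddnums_crl}.

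\textbf{Congruence for $N_d^\C$ and matching.} To conclude $N_d^\R\equiv N_d^\C\pmod 4$, I would apply the classical complex WDVV of \cite{RT} to $\P^3$ in the same point-constraint configuration, extracting a parallel recursion for $N_d^\C$ in terms of $\wt N^\C$ and smaller $N^\C$ whose coefficients also carry factors divisible by $4$ for $d_1\ge 2$. The mod-4 reduction then collapses to $N_d^\C\equiv (d-2)^2 N_{d-2}^\C\equiv N_{d-2}^\C\pmod 4$ for odd $d\ge 3$, while $N_2^\C=0$ (a smooth conic in $\P^3$ is planar, whereas four general points are not) handles the even base case; induction produces the stated values. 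The principal obstacle, in my view, is not the real side (which follows mechanically once Corollary~\ref{main0_crl} is in hand) but rather ensuring that the two specializations align mod~4 with the same sign: the normalization in \eqref{signcorr_e} with the prefactor $(-1)^{(d-1)/2}$ is calibrated precisely so that the signed real count is congruent—rather than anti-congruent—to the positive complex count.
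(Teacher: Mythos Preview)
Your derivation of the recursion and the congruence $N_d^{\R}\cong_4 1$ (odd $d$) are correct and match the paper's argument; the coefficient identity and sign bookkeeping are fine.

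The gap is in the $N_d^{\C}$ part. The complex WDVV recursion for $\P^3$ (the one in \cite[Theorem~10.4]{RT}, written as~\eref{RTrec_e} in the paper) does \emph{not} have coefficients divisible by~$4$ for $d_1\!\ge\!2$. Concretely, at $d\!=\!5$, $d_1\!=\!2$, $d_2\!=\!3$ the coefficient is $9\binom{7}{2}-6\binom{7}{3}=-21$, so your proposed collapse to $N_d^{\C}\equiv(d\!-\!2)^2N_{d-2}^{\C}\pmod 4$ is unfounded. The paper's proof is substantially more involved: it runs a \emph{simultaneous} induction on $N_d^{\C}$ and $\wt N_d^{\C}$. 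The $d_2$-even summands in~\eref{RTrec_e} vanish mod~4 because $N_{d_2}^{\C}\in 2\Z$ (itself part of the induction), and then for odd $d\!\ge\!5$ only the summands with $d_1\!=\!2,4$ survive by the inductive hypothesis on $\wt N_{d_1}^{\C}$; these two contribute $d\!-\!2$ and $d\!-\!3$, giving~$1$. For even~$d$, the argument is harder still: it requires symmetrizing the sums in~\eref{RTrec_e}, pairing $(d_1',d_2')$ with $(d_2',d_1')$, and invoking Kummer's Theorem on the $2$-adic valuation of the binomial coefficients to handle $\wt N_d^{\C}$ for $d\!\ge\!6$ even. None of this structure is visible from your sketch, and without the $\wt N_d^{\C}$ congruences the induction for $N_d^{\C}$ does not close.
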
 

\noindent
The procedures of \cite{Wel2}, \cite{Cho,Sol}, and \cite{Ge2,Teh} for determining 
the sign of each real curve passing through a specified real collection of 
constraints in~$\P^3$ are very different and depend on some global choices.
The latter affect the signs of all curves of a fixed degree in the same way, and 
so the real counts in each degree are determined up to an overall sign
by all three procedures. 
In the case of conjugate pairs of point constraints and odd-degree curves
(the intersection of the three settings),
the three procedures yield the same count, up to a sign in each degree.\\

\noindent
The second statement of Corollary~\ref{main_crl} establishes a special case of Mikhalkin's congruence,
a conjectural relation between real and complex counts of rational curves. 
Its analogues 
for counts of real rational curves with real point constraints in real del Pezzo surfaces 
as defined in~\cite{Wel1} are proved in~\cite{IKS13a,IKS13b}.
By \cite[Proposition~3]{BM} and \cite[Theorem~2]{BM},
the analogue of this statement for real point constraints in~$\P^3$ holds
with the sign modification in~\eref{signcorr_e}.
This suggests that it would be natural to modify the signs of~\cite{Wel2} as in~\eref{signcorr_e}.
By \cite[Theorem~2]{BM}, such a modification would also ensure 
the positivity of counts of rational curves with real point constraints
(but not with conjugate pairs of point constraints, as Table~\ref{P3nums_tbl} shows).
On the other hand, the second statement of Corollary~\ref{main_crl} does not extend
to more general constraints in~$\P^3$ (it fails for $d\!=\!1$ with two conjugate pairs 
of line constraints) or to~$\P^{2n-1}$ with $n\!\ge\!3$ 
(according to Table~\ref{Pnnums_tbl}).\\

\noindent
The numbers~\eref{realNdfn_e} count real curves passing through specified constraints
with signs and thus provide lower bounds for the actual numbers of such curves.
There are indications that these bounds are often sharp.
For example, for $d,m\!\in\!\Z^+$ with $d$ odd and $m\!=\!1$ if $d\!\ge\!5$,
there are configurations of $d\!-\!m$ conjugate pairs of points and
$2m$ conjugate pairs of lines in~$\P^3$
so that there are no real degree~$d$ curves passing through them;
see \cite[Examples~12,17,18]{Kollar13}.
In light of the recursion of Corollary~\ref{main_crl} and~\eref{RTrec_e},
\cite[Proposition~3]{Kollar13}, which relates the numbers $N_d^{\R}$ to counts
of real curves in~$\P^1\!\times\!\P^1$, may be opening a way for a combinatorial
proof that the numbers~$N_d^{\R}$ provide sharp lower bounds for $d\!\not\in\!2\Z$
(if this is indeed the case).\\

\noindent
The basic case (smallest~$k$) of the 
analogue of Theorem~\ref{main_thm} in complex Gromov-Witten
theory is equivalent to the associativity of the quantum product 
on the cohomology of the manifold; see \cite[Theorem~8.1]{RT}.
The basic case of 
Theorem~\ref{main2_thm} is similarly equivalent to a property of the quantum
product of a real symplectic manifold; see Section~\ref{quant_sec}.\\

\noindent
Theorem~\ref{main_thm} is a special case of Theorem~\ref{main2_thm},
which provides a WDVV-type relation for real Gromov-Witten invariants 
of real symplectic manifolds.
In the next two paragraphs, we outline the two proofs of Theorem~\ref{main2_thm}
appearing in this paper.
While the first approach requires some preparation, 
it is more natural from the point of view of real Gromov-Witten theory.
In~\cite{GZ5}, we describe a third proof of Theorem~\ref{main_thm},
which can be extended to some other cases of Theorem~\ref{main2_thm}.\\

\noindent
The WDVV~relation for complex Gromov-Witten theory obtained in \cite{KM,RT}
is a fairly direct consequence of a $\C$-codimension~1 relation on 
the Deligne-Mumford moduli space~$\ov\cM_{0,4}$ of complex genus~0 curves with 4~marked points.
According to this relation, the homology classes represented by two different nodal curves, 
e.g.~$[1,0]$ and~$[1,1]$ in Figure~\ref{m04_fig}, are the same. 
Thus, topologically defined counts of morphisms from these two types of domains 
into an almost Kahler manifold are the same.
As this relation simply states that two points in $\ov\cM_{0,4}$ represent the same
homology class, it is an immediate consequence of the connectedness of~$\ov\cM_{0,4}$.
The WDVV-type relation of Theorem~\ref{main2_thm} is a fairly direct consequence 
of an  $\R$-codimension~2 relation on the three-dimensional 
Deligne-Mumford moduli space~$\R\ov\cM_{0,3}$ 
of real genus~0 curves with 3~conjugate pairs of marked points which we establish in Section~3
through a detailed topological description of~$\R\ov\cM_{0,3}$;
see Proposition~\ref{DMrelR_prp} and its proof.
According to this relation, the (relative) homology classes represented by 
two different, two-nodal degenerations of real curves are the same.
Thus, topologically defined counts of morphisms from these two types of domains 
into a real almost Kahler manifold are the same; see Corollary~\ref{DMrelR_crl}.
This relation, for both curves and maps, is illustrated in Figure~\ref{RM_fig},
where the vertical line represents the irreducible component of the curve
preserved by the involution and the two horizontal lines represent 
the components interchanged by the involution.
In a sense, the situation with our recursion is analogous
to the situation with the $\C$-codimension~2 recursion of \cite[Lemma~1.1]{Getz}
on~$\ov\cM_{1,4}$,
which had to be discovered and established before it could be applied  
to complex genus~1 Gromov-Witten invariants.\\

\begin{figure}
\begin{pspicture} (-1.1,-1.5)(10,2.7)
\psset{unit=.4cm}
\psline[linewidth=.03](2,-1)(33,-1)
\rput(30,-2){$\ov\cM_{0,4}\!\approx\!\P^1$}
\rput(30,4){${\cal U}$}\rput(30.5,2){$\pi$}
\psline[linewidth=.03]{->}(30,3)(30,0)
\pscircle*(5,-1){.2}\pscircle*(15,-1){.2}\pscircle*(25,-1){.2}
\rput(5,-2){$[0,1]$}\rput(15,-2){$[1,0]$}\rput(25,-2){$[1,1]$}
\psline[linewidth=.02](3,6)(7,0)\psline[linewidth=.03](3,0)(7,6)
\pscircle*(5,3){.14}\pscircle*(4.5,2.25){.17}\pscircle*(5.5,3.75){.17}
\pscircle*(5.25,2.625){.17}\pscircle*(4.333,4){.17}
\rput(3.85,2.35){$z_0$}\rput(3.9,3.7){$z_1$}
\rput(6.2,3.6){$z_3$}\rput(6,2.7){$z_2$}
\psline[linewidth=.02](9,5)(9,1)
\psarc[linewidth=.02](8,5){1}{0}{90}\psarc[linewidth=.02](10,1){1}{180}{270}
\psline[linewidth=.02](11,5)(11,1)
\psarc[linewidth=.02](10,5){1}{0}{90}\psarc[linewidth=.02](12,1){1}{180}{270}
\psline[linewidth=.02](20,5)(20,1)
\psarc[linewidth=.02](19,5){1}{0}{90}\psarc[linewidth=.02](21,1){1}{180}{270}
\psline[linewidth=.02](22,5)(22,1)
\psarc[linewidth=.02](21,5){1}{0}{90}\psarc[linewidth=.02](23,1){1}{180}{270}
\psline[linewidth=.02](13,1.5)(18,4)
\psline[linewidth=.02](13,4.29)(18,3.17)
\pscircle*(16.9,3.45){.14}\pscircle*(14.5,2.25){.17}\pscircle*(15.5,3.75){.17}
\pscircle*(15.25,2.625){.17}\pscircle*(14.333,4){.17}
\rput(13.8,2.4){$z_0$}\rput(13.85,3.65){$z_1$}
\rput(16.2,3.9){$z_3$}\rput(15.9,2.4){$z_2$}
\psline[linewidth=.02](24.233,5.05)(24.75,-.275)
\psline[linewidth=.02](25.7,4.65)(24.55,-.525)
\pscircle*(24.7,.15){.14}\pscircle*(24.5,2.25){.17}\pscircle*(25.5,3.75){.17}
\pscircle*(25.25,2.625){.17}\pscircle*(24.333,4){.17}
\rput(23.8,2.4){$z_0$}\rput(23.85,3.65){$z_1$}
\rput(26.2,3.9){$z_3$}\rput(26,2.6){$z_2$}
\end{pspicture}
\caption{The universal curve $\cU\!\lra\!\ov\cM_{0,4}$.}
\label{m04_fig}
\end{figure}

\begin{figure}
\begin{pspicture}(-.5,-.2)(10,2.4)
\psset{unit=.4cm}
\psline[linewidth=.05](3,5)(3,0)
\psline[linewidth=.02](2.5,4.5)(6,4.5)\psline[linewidth=.02](2.5,.5)(6,.5)
\pscircle*(4,4.5){.15}\pscircle*(5,4.5){.15}
\pscircle*(3,3.5){.15}\pscircle*(3,1.5){.1}
\rput(4,5.1){\sm{1}}\rput(5,5.1){\sm{0}}
\rput(2.4,3.5){\sm{2}}\rput(2.4,1.5){\sm{$\bar2$}}
\pscircle*(4,.5){.1}\pscircle*(5,.5){.1}
\rput(4,-.2){\sm{$\bar1$}}\rput(5,-.2){\sm{$\bar0$}}
\psline[linewidth=.05](11,5)(11,0)
\psline[linewidth=.02](10.5,4.5)(14,4.5)\psline[linewidth=.02](10.5,.5)(14,.5)
\pscircle*(12,4.5){.15}\pscircle*(13,4.5){.15}
\pscircle*(11,3.5){.15}\pscircle*(11,1.5){.1}
\rput(12,5.1){\sm{$\bar1$}}\rput(13,5.1){\sm{0}}
\rput(10.4,3.5){\sm{2}}\rput(10.4,1.5){\sm{$\bar2$}}
\pscircle*(12,.5){.1}\pscircle*(13,.5){.1}
\rput(12,-.2){\sm{$1$}}\rput(13,-.2){\sm{$\bar0$}}
\rput(8,2.5){\begin{Large}$+$\end{Large}}
\rput(17,2.5){\begin{Large}$=$\end{Large}}
\psline[linewidth=.05](22,5)(22,0)
\psline[linewidth=.02](21.5,4.5)(25,4.5)\psline[linewidth=.02](21.5,.5)(25,.5)
\pscircle*(23,4.5){.15}\pscircle*(24,4.5){.15}
\pscircle*(22,3.5){.15}\pscircle*(22,1.5){.1}
\rput(23,5.1){\sm{2}}\rput(24,5.1){\sm{0}}
\rput(21.4,3.5){\sm{1}}\rput(21.4,1.5){\sm{$\bar1$}}
\pscircle*(23,.5){.1}\pscircle*(24,.5){.1}
\rput(23,-.2){\sm{$\bar2$}}\rput(24,-.2){\sm{$\bar0$}}
\psline[linewidth=.05](30,5)(30,0)
\psline[linewidth=.02](29.5,4.5)(33,4.5)\psline[linewidth=.02](29.5,.5)(33,.5)
\pscircle*(31,4.5){.15}\pscircle*(32,4.5){.15}
\pscircle*(30,3.5){.15}\pscircle*(30,1.5){.1}
\rput(31,5.1){\sm{$\bar2$}}\rput(32,5.1){\sm{0}}
\rput(29.4,3.5){\sm{1}}\rput(29.4,1.5){\sm{$\bar1$}}
\pscircle*(31,.5){.1}\pscircle*(32,.5){.1}
\rput(31,-.2){\sm{$2$}}\rput(32,-.2){\sm{$\bar0$}}
\rput(27,2.5){\begin{Large}$+$\end{Large}}
\end{pspicture}
\caption{A relation in $H_1(\R\ov\cM_{0,3})$; the dots labeled $i$ and $\bar{i}$ indicate 
the marked points $z_i^+$ and $z_i^-$, respectively.}
\label{RM_fig}
\end{figure}
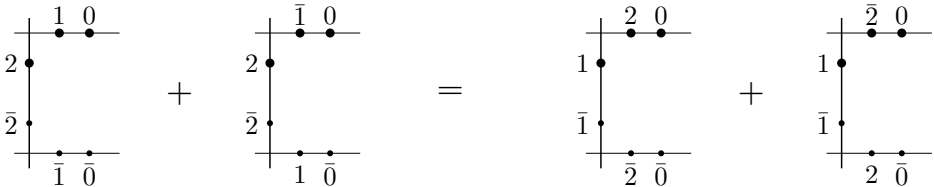

\noindent 
In Section~\ref{mainpf_sec}, we give an alternative proof of Theorem~\ref{main2_thm},
which bypasses Proposition~\ref{DMrelR_prp}.
We pull back the usual relation on~$\ov\cM_{0,4}$
by the forgetful morphism~$f_{012\bar{0}}$ which keeps the marked points
$z_0^+,z_1^+,z_2^+,z_0^-$; see~\eref{forgmap0_e} and~\eref{tiNdfn0_e}.
In the proof of \cite[Theorem~10.4]{RT}, a nodal element of~$\ov\cM_{0,4}$  
is a regular value of  a similar map and
all of its preimages are of the same type and contribute $+1$ each to the relevant count;
the situation with the proof of Corollary~\ref{DMrelR_crl} from
Proposition~\ref{DMrelR_prp} is analogous.
In contrast, a nodal element of~$\ov\cM_{0,4}$  is not a regular value of~$f_{012\bar{0}}$
and its preimages can be of four types, as indicated in Figures~\ref{LHS0_fig} and~\ref{RHS0_fig}; 
they are  morphisms from either a three-component domain or from a two-component domain.
The contribution of each three-component morphism  to the relevant count~\eref{tiNdfn0_e}
is no longer necessarily~$+1$; see Lemma~\ref{sign0_lmm}.
The stratum of two-component morphisms is not even 0-dimensional, but 
we show through a topological analysis that it does not contribute
to the count; see Lemma~\ref{sign2_lmm}.\\

\noindent
In real Gromov-Witten theory, signs of various contributions are generally 
a delicate issue. It shows up explicitly in the above description 
of the second approach, but is hidden in the first approach.
The analysis of signs for both approaches is carried out in Section~\ref{orient_sec},
where different orientations of moduli spaces of constrained real morphisms are compared.
This allows us to establish Propositions~\ref{KunnethSplit_prp} and~\ref{twistGW_prp}, 
which are used in the proofs of Theorem~\ref{main2_thm} in 
Sections~\ref{Rmainpf_sec} and~\ref{mainpf_sec}, 
as well as Theorem~\ref{evenvan_thm}, 
which provides vanishing results for 
real Gromov-Witten invariants of real symplectic manifolds, 
including in positive genera.\\

\noindent
We would like to thank J.~Morgan for his help in precisely identifying
$\ov\cM_{0,3}^{\R}$ in Remark~\ref{DMstr_rmk} and
E.~Ionel, J.~Koll\'ar, M.~Liu, N.~Sheridan, J.~Solomon, M.~Tehrani, 
and G.~Tian for related discussions.
We are also grateful to the referees for comments on previous versions of this paper
which led to significant improvements in the exposition.

\section{Main theorems and corollaries}
\label{mainthm_sec}

\noindent
The formula of Theorem~\ref{main_thm} is fundamentally a relation between 
real genus~0 GW-invariants;
it is a special case of the relation of Theorem~\ref{main2_thm}
for real symplectic manifolds. 
The latter implies that the real invariants of at least some real symplectic manifolds
are essentially independent of the involution~$\phi$; see Corollary~\ref{inveq_crl2}.
Likewise, the vanishing of the numbers $\lr{c_1,\ldots,c_k}_d^{\phi}$ with 
$c_i\!\in\!2\Z$ for some~$i$, established
in \cite[Section~A.5]{Teh} using the Equivariant Localization Theorem
\cite[(3.8)]{AB}, is a special case of the general vanishing phenomenon
for $\phi$-invariant insertions established in Theorem~\ref{evenvan_thm}~below.\\

\noindent
A \textsf{real symplectic manifold} is a triple $(X,\om,\phi)$ consisting of 
a symplectic manifold~$(X,\om)$ and an involution $\phi\!:X\!\lra\!X$
such that $\phi^*\om\!=\!-\om$.
Examples include $\P^{2n-1}$ with the standard Fubini-Study symplectic
form~$\om_{2n}$ and the involutions~\eref{taudfn_e} and~\eref{etadfn_e},
as well as  $(\P^{2n},\om_{2n+1})$ with the involution
$$\tau_{2n+1}\!:\P^{2n}\lra\P^{2n}\,, \qquad
[X_1,\ldots,X_{2n},X_{2n+1}]\lra 
[\bar{X}_2,\bar{X}_1,\ldots,\bar{X}_{2n},\bar{X}_{2n-1},\bar{X}_{2n+1}],$$
which extends~\eref{taudfn_e} to the even-dimensional projective spaces.
If 
\BE{ellbfa_e}\ell\!\ge\!0, \qquad \a\equiv(a_1,\ldots,a_\ell)\in(\Z^+)^{\ell}\,,\EE
and $X_{n;\a}\!\subset\!\P^{n-1}$ is a complete intersection of multi-degree~$\a$
preserved by~$\tau_n$,  $\tau_{n;\a}\!\equiv\!\tau_n|_{X_{n;\a}}$
is an anti-symplectic involution on $X_{n;\a}$ with respect to the symplectic form
$\om_{n;\a}\!=\!\om_n|_{X_{n;\a}}$. 
Similarly, if $X_{2n;\a}\!\subset\!\P^{2n-1}$ is preserved by~$\eta_{2n}$,
$\eta_{2n;\a}\!\equiv\!\eta_{2n}|_{X_{2n;\a}}$
is an anti-symplectic involution on $X_{2n;\a}$ with respect to the symplectic form
$\om_{2n;\a}\!=\!\om_{2n}|_{X_{2n;\a}}$.\\

\noindent
Let $(X,\om,\phi)$ be a real symplectic manifold.
The fixed locus~$X^{\phi}$ of~$\phi$ is a Lagrangian submanifold,
which may be empty.
Let
$$H_2(X)_{\phi}=\big\{\be\!\in\!H_2(X;\Z)\!:\,\phi_*\be=-\be\big\},\qquad
H^*(X)^{\phi}_{\pm}\equiv\big\{\mu\!\in\!H^*(X)\!:\,\phi^*\mu\!=\!\pm\mu\big\}.$$
Similarly to \cite[Section~1]{Ge2}, we define
\BE{fddfn_e}\fd\!: H_2(X)\lra H_2(X)_{\phi}\qquad\hbox{by}\qquad
\fd(\be)=\be-\phi_*\be\,.\EE
A \sf{real bundle pair} $(V,\ti\phi)\!\lra\!(X,\phi)$   
consists of a complex vector bundle $V\!\lra\!X$ and 
a conjugation~$\ti{\phi}$ on~$V$ lifting~$\phi$,
i.e.~an involution restricting to an anti-complex linear
homomorphism on each fiber.
The fixed locus $V^{\ti\phi}\!\lra\!X^{\phi}$ is then a maximal totally real
subbundle of~$V|_{X^{\phi}}$, i.e.
$$V|_{X^{\phi}}=V^{\ti\phi}\oplus\fI V^{\ti\phi}\,,$$
where $\fI$ is the complex structure on~$V$.
Let
$$w_2^{\ti\phi}(V)\in H^2_{\phi}(X;\Z_2)\equiv H^2_{\Z_2}(X;\Z_2)$$
denote the equivariant second Stiefel-Whitney class of $(V,\ti\phi)$;
see \cite[Section~2]{GZ1}.\\

\noindent
Let $\cJ_{\om}^{\phi}$ be the space of 
$\om$-compatible almost complex structures~$J$ on~$X$ such that $\phi^*J\!=\!-J$.
For $J\!\in\!\cJ_{\om}^{\phi}$, $c\!=\!\tau,\eta$, and $\be\!\in\!H_2(X)_{\phi}$,  
denote~by 
\BE{fMbeX_e}\fM_{0,k}(X,\be)^{\phi,c}\subset \ov\fM_{0,k}(X,\be)^{\phi,c}\EE
the moduli space of $c$-equivalence classes of 
$k$-marked $J$-holomorphic $(\phi,c)$-real maps in the homology class~$\be$ and
its natural compactification consisting of stable real maps from nodal domains.\\

\noindent
By \cite[Theorem~6.5]{Ge2}, both spaces in~\eref{fMbeX_e} with $c\!=\!\tau$ 
are orientable in the sense of Kuranishi structures 
(or for a generic~$J$ if $(X,\om)$ is strongly semi-positive)~if 
\begin{enumerate}[label=($\cO_{\tau}$),leftmargin=*]
\item $X^{\phi}$ is orientable and there exists a real bundle pair
$(E,\ti\phi)\!\lra\!(X,\phi)$ such that 
$$w_2(TX^{\phi})=w_1(E^{\ti\phi})^2 \quad\hbox{and}\quad
\frac12\lr{c_1(X),\be'}+\lr{c_1(E),\be'}\in 2\Z~~\forall~\be'\!\in\!H_2(X)_{\phi}\,.$$
\end{enumerate}
The first requirement on~$(E,\ti\phi)$ above implies that 
$TX^{\phi}\!\oplus\!2E^{\ti\phi}$  admits a spin structure.
By \cite[Theorem~1.1]{GZ1}, both spaces in~\eref{fMbeX_e} with $c\!=\!\eta$
are orientable~if
\begin{enumerate}[label=($\cO_{\eta}$),leftmargin=*]
\item $w_2^{\La_{\C}^{\top}\tnd\phi}(\La_{\C}^{\top}TX)=\ka^2$ for some
$\ka\!\in\!H_{\phi}^1(X)$.
\end{enumerate}
This condition implies that $(TX,\tnd\phi)$ admits a spin sub-structure, 
as defined above \cite[Corollary~5.10]{GZ2}.
By \cite[Corollary~2.4]{GZ1}, \tauetacond{\eta} holds if either 
$\La_{\C}^{\top}(TX,\tnd\phi)$ admits a real square root, i.e.~there is an isomorphism
\BE{SqRT_e}\La_{\C}^{\top}(TX,\tnd\phi)\approx (L,\ti\phi)^{\otimes 2}\EE
for a real line bundle pair $(L,\ti\phi)\!\lra\!(X,\phi)$, or 
$\pi_1(X)\!=\!0$ and $w_2(X)\!=\!0$.
A~fixed real square root determines a spin  sub-structure on~$(TX,\tnd\phi)$.\\

\noindent
The moduli space $\ov\fM_{0,k}(X,\be)^{\phi,c}$ with $c\!=\!\tau,\eta$
has no boundary in the sense of Kuranishi structures~if
\BE{bndcond_e}\be\not\in\Im(\fd) \qquad\hbox{or}\qquad X^{\phi}=\eset.\EE
Thus, it carries a virtual fundamental class if~\tauetacond{c}, with $c$ as above, and~\eref{bndcond_e} hold. 
Under the above assumptions, we define
\BE{phinums_e}
\blr{\mu_1,\ldots,\mu_k}_{\be}^{\phi,c}
=\int_{[\ov\fM_{0,k}(X,\be)^{\phi,c}]^{\vir}}
\ev_1^*\mu_1\,\ldots\,\ev_k^*\mu_k\in\Q\EE
for any $\mu_1,\ldots,\mu_k\!\in\!H^*(X)$.
This number depends on the chosen orientation of the moduli space.
If $c\!=\!\tau$, we orient the moduli space as
 in the proofs of \cite[Corollary~1.8]{Ge} and
\cite[Theorem~6.5]{Ge2} from any spin structure  on~$TX^{\phi}\!\oplus\!2E^{\ti\phi}$.
If $c\!=\!\eta$, we orient the moduli space via the pinching construction of 
\cite[Lemma~2.5]{Teh} from any spin sub-structure on~$(TX,\tnd\phi)$;
see \cite[Corollary~5.10]{GZ2}.
In either case, we use the {\it same} spin structure or sub-structure for all~$\be$.\\

\noindent
If \tauetacond{\tau} and \tauetacond{\eta} are satisfied, 
but not necessarily~\eref{bndcond_e}, the glued moduli space
\BE{gluedsp_e2}\ov\fM_{0,k}(X,\be)^{\phi} \equiv
\ov\fM_{0,k}(X,\be)^{\phi,\tau}\cup\ov\fM_{0,k}(X,\be)^{\phi,\eta}\EE
is orientable and has no boundary; see \cite[Theorem~1.7]{Teh}.
We then define
\BE{phinums_e2}
\blr{\mu_1,\ldots,\mu_k}_{\be}^{\phi}
=\int_{[\ov\fM_{0,k}(X,\be)^{\phi}]^{\vir}}
\ev_1^*\mu_1\,\ldots\,\ev_k^*\mu_k\in\Q\EE
for any $\mu_1,\ldots,\mu_k\!\in\!H^*(X)$.
The orientations on $\ov\fM_{0,k}(X,\be)^{\phi,\tau}$ and 
$\ov\fM_{0,k}(X,\be)^{\phi,\eta}$ constructed as in the previous paragraph
induce an orientation on $\ov\fM_{0,k}(X,\be)^{\phi}$ 
after reversing the orientation on $\ov\fM_{0,k}(X,\be)^{\phi,\eta}$ if 
the chosen spin structure on $TX^{\phi}\!\oplus\!2E^{\ti\phi}$
and  spin sub-structure on~$(TX,\tnd\phi)$ induce the same
orientation on~$X^{\phi}$; see \cite[Proposition~3.3]{Teh}.\\

\noindent
Choose bases $\{\ga_i\}_{i\le\ell}$ and $\{\ga^i\}_{i\le\ell}$ for $H^*(X)$ so that 
$$\ga^i\in H^*(X)^{\phi}_+\cup H^*(X)^{\phi}_-
\qquad\hbox{and}\qquad
\PD_{X^2}(\De_X)=\sum_{i=1}^{\ell}\ga_i\times\ga^i\in H^*(X^2),$$
where $\De_X\!\subset\!X^2$ is the diagonal.
If $\mu_1,\ldots,\mu_k\!\in\!H^*(X)$ and $I\!\subset\!\{1,\ldots,k\}$,
let $\mu_I$ denote a tuple with the entries~$\mu_i$ with $i\!\in\!I$,
in some order.
Let
\BE{GWdfn_e}\blr{\mu_1,\ldots,\mu_k}_{\be}^X=\int_{[\ov\fM_{0,k}(X,\be)]^{\vir}}
\ev_1^*\mu_1\,\ldots\,\ev_k^*\mu_k\in\Q\,,\EE
denote the (complex) genus~0 GW-invariants of~$X$.

\begin{thm}\label{main2_thm}
Let $(X,\om,\phi)$ be a compact real symplectic manifold, $k\!\in\!\Z$ with $k\!\ge\!2$,
$$\be\in H_2(X)_{\phi}\!-\!\{0\},\qquad \mu\!\in\!H^{2*}(X)_+^{\phi},
\quad\hbox{and}\quad\mu_1,\ldots,\mu_k\!\in\!H^{2*}(X)^{\phi}_-.$$
\begin{enumerate}[label=(\arabic*),leftmargin=*]

\item\label{WDVVc_it} 
If $c\!=\!\tau,\eta$ and \tauetacond{c} and \eref{bndcond_e} are satisfied, then
\begin{equation*}\begin{split}
\blr{\mu_1,\mu\mu_2,\mu_3,\ldots,\mu_k}_{\be}^{\phi,c}
-\blr{\mu\mu_1,\mu_2,\mu_3,\ldots,\mu_k}_{\be}^{\phi,c}
=\sum_{\begin{subarray}{c}\fd(\be_1)+\be_2=\be\\ \be_1,\be_2\in H_2(X)-\{0\} \end{subarray}}
\!\!\!\sum_{I\sqcup J=\{3,\ldots,k\}}\!\!\!
\sum_{\begin{subarray}{c}1\le i\le\ell\\ \ga^i\in H^{2*}(X)^{\phi}_-\end{subarray}}
\!\!\!\!\!\!2^{|I|}\Bigg(\quad&\\
\blr{\mu,\mu_1,\mu_I,\ga_i}_{\be_1}^X
\!\blr{\mu_2,\mu_J,\ga^i}_{\be_2}^{\phi,c}
\!-\blr{\mu,\mu_2,\mu_I,\ga_i}_{\be_1}^X
\!\blr{\mu_1,\mu_J,\ga^i}_{\be_2}^{\phi,c}&\Bigg).
\end{split}\end{equation*}

\item If \tauetacond{\tau} and \tauetacond{\eta} are satisfied, then
the above identity holds for the $\lr{\ldots}^{\phi}$ invariants.

\end{enumerate}
\end{thm}

\noindent
This theorem, established in Section~\ref{Rmainpf_sec}, 
concerns real genus~0 GW-invariants~\eref{phinums_e} and~\eref{phinums_e2}
with all insertions~$\mu_i$ coming from~$H^{2*}(X)^{\phi}_-$.
By the first part of Theorem~\ref{evenvan_thm} below,  
the invariants~\eref{phinums_e} and~\eref{phinums_e2} with any 
insertion~$\mu_i$ coming from~$H^{2*}(X)^{\phi}_+$ vanish.
The proof of Theorem~\ref{evenvan_thm} in Section~\ref{orient_sec}
extends the vanishing statement of \cite[Theorem~1.10]{Teh} for real genus~0
invariants with even-degree insertions to all settings
when the real GW-invariants are defined and the unmarked real moduli space is orientable.
By \cite[Theorem~1.3]{GZ3}, this is the case  
in any genus under the assumptions in~\ref{vanpos_it} of Theorem~\ref{evenvan_thm}.

\begin{thm}\label{evenvan_thm}
Let $(X,\om,\phi)$ be a compact real symplectic $2n$-manifold,
$\be\!\in\!H_2(X)_{\phi}\!-\!\{0\}$, and 
$\mu_1,\ldots,\mu_k\!\in\!H^*(X)$ with $\mu_i\!\in\!H^*(X)^{\phi}_+$ 
for some~$i$.
\begin{enumerate}[label=(\arabic*),leftmargin=*]
\item\label{van0_it} Suppose $c\!=\!\tau,\eta$, $(\cO_{\tau})$ holds if $c\!=\!\tau$, and 
$(\cO_{\eta})$ holds if $c\!=\!\eta$.
If  \eref{bndcond_e} is  satisfied, then
\hbox{$\lr{\mu_1,\ldots,\mu_k}_{\be}^{\phi,c}\!=\!0$}.
If $(\cO_{\tau})$ and $(\cO_{\eta})$ are satisfied, but not necessarily~\eref{bndcond_e},
$\lr{\mu_1,\ldots,\mu_k}_{\be}^{\phi}\!=\!0$.

\item\label{vanpos_it} If $c$ is an orientation-reversing involution on a compact orientable genus~$g$ 
surface~$\Si_g$, $n$ is odd, $\La_{\C}^{\top}(TX,\tnd\phi)$ admits a real square root,
and $X^{\phi}\!=\!\eset$,
then real genus~$g$ GW-invariants $\lr{\mu_1,\ldots,\mu_k}_{\be}^{\phi,c}$ 
of~$(X,\om,\phi)$ vanish. 
\end{enumerate}
\end{thm}

\begin{rmk}\label{RealGWsI_rmk}
Let $c_g$ be an orientation-reversing involution on~$\Si_g$ so that $\Si_g^{c_g}\!=\!\eset$.
By \cite[Corollary~1.1]{Nat}, $c_g$ is unique up to conjugation by diffeomorphisms of~$\Si_g$.
Similarly to~\eref{gluedsp_e2}, the moduli spaces $\ov\fM_{g,k}(X,\be)^{\phi,c}$
of real $J$-holomorphic maps corresponding to different topological types of 
involutions~$c$ on~$\Si_g$ can be glued together into a moduli space
$\ov\fM_{g,k}(X,\be)^{\phi}$ without boundary. 
If $X^{\phi}\!=\!\eset$, then
\BE{fMpos_e}\ov\fM_{g,k}(X,\be)^{\phi}=\ov\fM_{g,k}(X,\be)^{\phi,c_g}\EE
and the GW-invariants $\lr{\ldots}_{\be}^{\phi,c_g}$ are the same as the combined
real GW-invariants $\lr{\ldots}_{g,\be}^{\phi}$ 
expected to arise from the left-hand side of~\eref{fMpos_e}.
Since the present paper was first completed,
such invariants have been defined with the condition $X^{\phi}\!=\!\eset$ weakened
to the existence of the square root as in~\eref{SqRT_e} such that 
\hbox{$w_2(TX^{\phi})\!=\!w_1(L^{\ti\phi})^2$}; see \cite[Theorems~1.3,1.4]{RealGWsI}.
The proof of Theorem~\ref{evenvan_thm} applies verbatim to the real genus~$g$ GW-invariants
of \cite[Theorems~1.4,1.5]{RealGWsI}.
\end{rmk}

\noindent
For a strongly semi-positive real symplectic manifold $(X,\om,\phi)$,
the real genus~0 GW-invariant through constraints $\mu_1,\ldots,\mu_k$
is of the same parity as the complex GW-invariant of the same degree
through the constraints $\mu_1,\phi^*\mu_1,\ldots,\mu_k,\phi^*\mu_k$.
Thus, Theorem~\ref{evenvan_thm} implies that certain complex genus~0 GW-invariants are even. 
For example, the GW-invariants of~$\P^{2n-1}$ with even numbers of insertions of 
each codimension that include insertions of even codimensions are even. 
This is not the case for even-dimensional projective spaces
(for which the degree $4d\!+\!1$ unmarked real moduli spaces are not orientable;
see \cite[Proposition~5.1]{Sol}).
For example, the number of lines through two points in~$\P^n$ is~1
(these constraints are of even codimension if~$n$ is even).\\

\noindent
For a real symplectic manifold~$(X,\om,\phi)$, let 
$H_{\eff}(X)_{\phi}\!\subset\!H_2(X)_{\phi}$ denote the subset of nonzero classes
that can be  represented by a $J$-holomorphic map from a disjoint union of copies of~$\P^1$
for every $J\!\in\!\cJ_{\om}^{\phi}$.

\begin{crl}\label{inveq_crl}
Let $(X,\om,\phi)$ be a compact real  symplectic manifold such that 
every positive-degree element of $H^{2*}(X)^{\phi}_-$ is divisible by an element 
of~$H^2(X)^{\phi}_-$ in~$H^{2*}(X)$ and \hbox{$\be\!\in\!H_{\eff}(X)_{\phi}$}.
Then there exist linear maps
$$P_{\be';\be}\!: \bigoplus_{k=1}^{\i}H^{2*}(X)^{\otimes k} \lra H^{2*}(X)^{\phi}_-,
\quad \be'\!\in\!H_{\eff}(X)_{\phi},\,
\be\!-\!\be'\!\in\!\big(H_{\eff}(X)_{\phi}\!\cup\!\{0\}\big)\!\cap\!\Im(\fd),$$
determined by the  GW-invariants of~$(X,\om)$ and 
$\phi_*\!:H_*(X)\!\lra\!H_*(X)$ with the following properties.
\begin{enumerate}[label=(\arabic*),leftmargin=*]

\item\label{inveq_it1}  If $c\!=\!\tau,\eta$, \tauetacond{c} is satisfied, 
and either $\be\!\not\in\!\Im(\fd)$ or $X^{\phi}\!=\!\eset$,
then
\BE{inveq_e0}
\blr{\mu_1,\ldots,\mu_k}_{\!\be}^{\!\phi,c}=\!\!\!
\sum_{\begin{subarray}{c}\be'\in H_{\eff}(X)_{\phi}\\
\be-\be'\in(H_{\eff}(X)_{\phi}\cup\{0\})\cap\Im(\fd)\end{subarray}}
\hspace{-.7in}\blr{P_{\be';\be}(\mu_1,\ldots,\mu_k)}_{\!\be'}^{\!\phi,c}
\quad\forall\,\bigotimes_{i=1}^k\!\mu_i\in H^{2*}(X)^{\otimes k},\,k\in\!\Z^+.\EE

\item\label{inveq_it2} If \tauetacond{\tau} and \tauetacond{\eta} are satisfied, 
then~\eref{inveq_e0} holds with 
$\lr{\cdot}^{\phi,c}$ replaced by~$\lr{\cdot}^{\phi}$.

\end{enumerate}
\end{crl}

\noindent
Corollary~\ref{inveq_crl} is deduced from Theorems~\ref{main2_thm} and~\ref{evenvan_thm} 
in Section~\ref{quant_sec}.
It provides the strongest results for real \textsf{Fano} symplectic 
manifolds, i.e.~real symplectic manifolds~$(X,\om,\phi)$ such that 
$\lr{c_1(X),\be}\!>\!0$ for all $\be\!\in\!H_{\eff}(X)_{\phi}$.
For a real Fano symplectic  manifold $(X,\om,\phi)$ and 
$\be\!\in\!H_{\eff}(X)_{\phi}$, let
$$c_{\min}^{\phi}(\be),c_+^{\phi}(\be)\in\Z^+$$
denote the smallest and the second smallest values of the function
$$\big\{\be'\!\in\!H_{\eff}(X)_{\phi},\,\be\!-\!\be'\!\in\!\Im(\fd)\big\}
\lra\Z^+, \qquad \be'\lra\lr{c_1(X),\be'};$$
if the smallest value is achieved by two different classes~$\be'$,
then $c_{\min}^{\phi}(\be)\!\equiv\!c_+^{\phi}(\be)$.
Let $\be_{\bu}^{\phi}\!\in\!H_{\eff}(X)_{\phi}$ be such~that 
$$\blr{c_1(X),\be_{\bu}^{\phi}}=c_{\min}^{\phi}(\be).$$

\begin{crl}\label{inveq_crl2}
Let $(X,\om,\phi)$ and $\be$ be as in Corollary~\ref{inveq_crl}.
If $X$ is Fano and \hbox{$c_+^{\phi}(\be)\!>\!(\dim X)/2\!+\!1$}, then there exists a linear map
$$P_{\be}\!: \bigoplus_{k=1}^{\i}H^{2*}(X)^{\otimes k} \lra
H^{n-1+c_{\min}(\phi)}(X)^{\phi}_-$$
determined by the GW-invariants of~$(X,\om)$ and 
$\phi_*\!:H_*(X)\!\lra\!H_*(X)$ with the following properties.
\begin{enumerate}[label=(\arabic*),leftmargin=*]

\item\label{inveq2_it0} 
If $c\!=\!\tau,\eta$, \tauetacond{c} is satisfied, and either 
$\be\!\not\in\!\Im(\fd)$ or $X^{\phi}\!=\!\eset$, then
\BE{inveq2_e0}
\blr{\mu_1,\ldots,\mu_k}_{\!\be}^{\!\phi,c}=
\blr{P_{\be}(\mu_1,\ldots,\mu_k)}_{\!\be_{\bu}^{\phi}}^{\!\phi,c}
\quad\forall\,\mu_1,\ldots,\mu_k\in H^{2*}(X)^{\otimes k},\,k\in\!\Z^+.\EE

\item\label{inveq2_it1} If \tauetacond{\tau} and \tauetacond{\eta} are satisfied, 
then~\eref{inveq2_e0} holds with 
$\lr{\cdot}^{\phi,c}$ replaced by~$\lr{\cdot}^{\phi}$.

\end{enumerate} 
If $X$ is Fano and $c_{\min}^{\phi}(\be)\!>\!(\dim X)/2\!+\!1$, then the invariants 
$\lr{\cdot}_{\be}^{\phi,c}$ and $\lr{\cdot}_{\be}^{\phi}$ 
with insertions from $H^{2*}(X)$
vanish
under the assumptions in~\ref{inveq2_it0} and~\ref{inveq2_it1}, respectively.
\end{crl}

\noindent
The virtual dimensions of the moduli spaces in~\eref{gluedsp_e2} are 
\BE{virdim_e}\dim^{\vir}\ov\fM_{0,k}(X,\be)^{\phi} =\dim^{\vir}\ov\fM_{0,k}(X,\be)^{\phi,c}
=\blr{c_1(X),\be}\!+\!(n\!-\!3)+2k\,,\EE
where $n\!=\!(\dim X)/2$.
In particular, the real genus~0 one-insertion GW-invariants 
$\lr{\mu}_{\be}^{\phi,c}$ and $\lr{\mu}_{\be}^{\phi}$ (whenever they are defined)
vanish if $\lr{c_1(X),\be}\!>\!n\!+\!1$.
Thus, Corollary~\ref{inveq_crl2} is an immediate consequence of Corollary~\ref{inveq_crl}.\\

\noindent
For $n\!\in\!\Z$ and $\ell,\a$ as in~\eref{ellbfa_e}, let
$$|\a|=a_1\!+\!\ldots\!+\!a_{\ell}, \qquad
\lr{\a}_n=2n\!-\!2\!-\!|\a|\!-\!\ell.$$
If $X_{n;\a}\!\subset\!\P^{n-1}$ is a complete intersection as before, then
$$\La_{\C}^{\top}TX_{n;\a}\approx\cO_{\P^{n-1}}\big(n\!-\!|\a|\big)\big|_{X_{n;\a}}.$$
By the Lefschetz Theorem on Hyperplane Sections \cite[p156]{GH},
$\pi_1(X_{n;\a})\!=\!0$ if the complex dimension of~$X_{n;\a}$ is at least~2.
If $n\!\in\!2\Z$ and \hbox{$X_{n;\a}\!\subset\!\P^{n-1}$} is $\eta_n$-invariant, then 
$w_2(X_{n;\a})\!=\!0$ and $X_{n;\a}^{\eta_{n;\a}}\!=\!\eset$.
By \cite[Corollary~2.4]{GZ1}, 
an $\eta_n$-invariant complete intersection $X_{n;\a}\!\subset\!\P^{n-1}$
thus satisfies~\tauetacond{\eta} if its complex dimension is at least~2 
or $X_{n;\a}\!\approx\!\P^1$.
If the complex dimension of~$X_{n;\a}$ is~1 and $X_{n;\a}\!\not\approx\!\P^1$,
then the moduli spaces in~\eref{gluedsp_e2} with $X\!=\!X_{n;\a}$ are empty.
A $\tau_n$-invariant complete intersection $X_{n;\a}\!\subset\!\P^{n-1}$
satisfies~\tauetacond{\tau} if
\BE{CItau_e} n-|\a|\in2\Z \qquad\hbox{and}\qquad
a_1^2+\ldots+a_{\ell}^2- |\a|\in 4\Z\,;\EE
see the proof of \cite[Corollary~6.8]{Ge2}.
If the first condition in~\eref{CItau_e} is satisfied, then 
$(X_{n;\a},\tau_{n;\a})$ satisfies~\tauetacond{\eta}.\\

\noindent
For $d\!\in\!\Z$, let \hbox{$\lr{d}\!\subset\!H_2(X_{n;\a})$} denote the subset of classes~$\be$
whose image in~$\P^{n-1}$ is~$d$ times the homology class of a line $\P^1\!\subset\!\P^{n-1}$.
If in addition~$\phi$ is an involution on~$X_{n;\a}$, let
$$\lr{d}_{\phi}=\lr{d}\cap H_2\big(X_{n;\a}\big)_{\phi}\,.$$
If the complex dimension of~$X_{n;\a}$ is at least~3, $\lr{d}$ consists of a single element.
In all cases, $\be\!\in\!\lr{d}_{\phi}$ satisfies the first condition in~\eref{bndcond_e} if 
$d\!\not\in\!2\Z$.
We denote by $H\!\in\!H^2(X_{n;\a})$ the restriction of the hyperplane class.\\

\noindent
Suppose $X\!=\!X_{n;\a}\!\subset\!\P^{n-1}$ is a complete intersection 
of multi-degree~$\a$ invariant under 
$\phi_{\P^{n-1}}\!\equiv\!\eta_n$ or $\phi_{\P^{n-1}}\!\equiv\!\tau_n$, 
$\phi\!=\!\phi_{\P^{n-1}}|_X$, $c\!=\!\eta,\tau$, and $d\!\in\!\Z$.
We denote~by
\begin{enumerate}[label=$\bu$,leftmargin=*]

\item $\lr{\ldots}_d^{\phi}$ the sum of the numbers~\eref{phinums_e2} 
over $\be\!\in\!\lr{d}_{\phi}$ if $\phi_{\P^{n-1}}\!=\!\eta_n$
or \eref{CItau_e} is satisfied;

\item $\lr{\ldots}_d^{\phi,c}$ the sum of the numbers~\eref{phinums_e} 
over $\be\!\in\!\lr{d}_{\phi}$ if $\phi_{\P^{n-1}}\!=\!\eta_n$, or

\begin{enumerate}[leftmargin=*]

\item[($\tau_n\eta$)] 
$d\!\not\in\!2\Z$ , $c\!=\!\eta$, and the first condition in~\eref{CItau_e} is satisfied, or

\item[($\tau_n\tau$)]  
$d\!\not\in\!2\Z$ , $c\!=\!\tau$, and both conditions in~\eref{CItau_e} are satisfied.

\end{enumerate}
\end{enumerate}
The next corollary is also proved in Section~\ref{quant_sec}.

\begin{crl}\label{inveqCI_crl}
Suppose $n\!\in\!\Z^+$, $\ell\!\in\!\Z^{\ge0}$, $\a\!\in\!(\Z^+)^{\ell}$,
$X\!=\!X_{n;\a}\!\subset\!\P^{n-1}$ is a complete intersection 
of multi-degree~$\a$ invariant under $\phi_{\P^{n-1}}\!\equiv\!\eta_n$ or 
$\phi_{\P^{n-1}}\!\equiv\!\tau_n$, and $\phi\!=\!\phi_{\P^{n-1}}|_X$.
\begin{enumerate}[label=(\arabic*),leftmargin=*]

\item\label{CIvan_it}
Let $c\!=\!\eta,\tau$ and $\mu_1,\ldots,\mu_k\!\in\!H^{2*}(X)$.
If $\phi_{\P^{n-1}}\!=\!\eta_n$ and $c\!=\!\tau$, then $\lr{\mu_1,\ldots,\mu_k}_d^{\phi,c}\!=\!0$.
The same conclusion holds~if either ($\tau_n\eta$) holds or
\begin{enumerate}[label=$\bu$,leftmargin=*]

\item $\phi_{\P^{n-1}}\!=\!\eta_n$ or ($\tau_n\tau$) is satisfied and

\item $a_i\!\in\!2\Z$ for some~$i$, or $\mu_j\!\in\!H^{4*}(X)$ for some~$j$, 
or $d\!\in\!2\Z$.

\end{enumerate}
If the last bullet condition holds and either $\phi_{\P^{n-1}}\!=\!\eta_n$ or 
\eref{CItau_e} is satisfied, then \hbox{$\lr{\mu_1,\ldots,\mu_k}_d^{\phi}\!=\!0$}.

\item\label{CIred_it} 
Suppose $3|\a|\!-\!\ell\!<\!2n$ and $d\!\in\!\Z$. Then there exists a linear map
$$C_d\!:\bigoplus_{k=1}^{\i}H^{2*}(X)^{\otimes k}\lra\Z$$
determined by the  GW-invariants of~$(X,\om_n|_X)$ and 
$\phi_*\!:H_*(X)\!\lra\!H_*(X)$ such~that for all $\mu_1,\ldots,\mu_k\!\in\!H^{2*}(X)$
\begin{enumerate}[label=(2\alph*),leftmargin=*]

\item  $\lr{\mu_1,\ldots,\mu_k}_d^{\phi}=C_d(\mu_1,\ldots,\mu_k)\lr{H^{\lr{\a}_n}}_1^{\phi}$
if $\phi_{\P^{n-1}}\!=\!\eta_n$ or  \eref{CItau_e} is satisfied;

\item $\lr{\mu_1,\ldots,\mu_k}_d^{\phi,c}=C_d(\mu_1,\ldots,\mu_k)\lr{H^{\lr{\a}_n}}_1^{\phi,c}$
if $c\!=\!\eta,\tau$ and either ($\tau_n\eta$) or 
the first bullet condition in~\ref{CIvan_it} holds.

\end{enumerate}
\end{enumerate}
\end{crl}

\vspace{.1in}

\noindent
For example, the genus~0 real GW-invariants~\eref{realNdfn_e} 
of $(\P^{2n-1},\phi)$ with  $\phi\!=\!\eta_{2n},\tau_{2n}$ satisfy
$$\lr{c_1,\ldots,c_k}_d^{\phi}=C_d(c_1,\ldots,c_k)\lr{2n\!-\!1}_1^{\phi}$$
for some $C_d(c_1,\ldots,c_k)\!\in\!\Z$ independent of the choice of~$\phi$.
This implies~\eref{taueta_e}.
Corollary~\ref{inveqCI_crl}\ref{CIred_it} extends~\eref{taueta_e} to
Fano complete intersections $X_{n;\a}\!\subset\!\P^{2n-1}$ with $n\!\in\!2\Z^+$ that are preserved
by both~$\tau_n$ and~$\eta_n$.
The approach to~\eref{taueta_e} in~\cite{Teh} extends to $X_{n;\a}\!\subset\!\P^{n-1}$
without a restriction on~$\a$, but is generally limited to
complete intersections in real symplectic manifolds with large torus actions
and insertions coming from the ambient manifolds.
Corollary~\ref{inveqCI_crl}\ref{CIvan_it} extends the vanishing statement of 
\cite[Theorem~1.10]{Teh} to complete intersection using completely different reasoning.
More generally, physical considerations in~\cite{Wal} suggest that 
the real genus~0 GW-invariants vanish whenever \eref{bndcond_e} 
does not hold, but~\tauetacond{\tau} and~\tauetacond{\eta} are satisfied,
i.e.~when the gluing of the two parts of the moduli space as in~\eref{gluedsp_e2} 
is necessary and possible.\\

\noindent  
If $X^{\phi}$ is orientable or $\ov\fM_{0,k}(X,\be)^{\phi,\eta}\!\neq\!\eset$,
then $\lr{c_1(X),\be}\!\in\!2\Z$ and
\BE{dimcong_e}\dim^{\vir}\ov\fM_{0,k}(X,\be)^{\phi} =\dim^{\vir}\ov\fM_{0,k}(X,\be)^{\phi,c}
\cong n\!-\!3 \mod2;\EE
see~\eref{virdim_e}.
Thus, the real genus~0 GW-invariants~\eref{phinums_e} and~\eref{phinums_e2}
with all insertions $\mu_i\!\in\!H^{2*}(X)$ vanish if  $n\!\in\!2\Z$.
In this case, Theorem~\ref{main2_thm} and Corollary~\ref{inveq_crl} are inutile.
Theorem~\ref{main2_thm} can be extended to odd-degree
cohomology insertions at the cost of adding signs for each summand depending
on the permutation of the odd-degree insertions.
In particular, the formula of Theorem~\ref{main2_thm} is valid without any changes 
if there is only one odd-degree insertion, $\mu$ or~$\mu_i$.
Corollary~\ref{inveq_crl} extends to odd-degree insertions
as a reduction to invariants with at most one even-degree insertion 
which does not increase the number of odd insertions.\\

\noindent
Theorem~\ref{main2_thm} can be extended to the real GW-invariants with
real marked points defined in~\cite{Ge2}.
These invariants are defined by intersecting with the pull-back of 
a homology class~$\Ga$ from the corresponding Deligne-Mumford space of real curves
by the forgetful map; see \cite[Section~1]{Ge2}.
Since the proof of Theorem~\ref{main2_thm} is essentially intersection theory,
it readily fits with the definition of the invariants in~\cite{Ge2}. 
The analogue of the right-hand side of the formula in Theorem~\ref{main2_thm}
would then involve 
Kunneth-style splitting of~$\Ga$ between the real and complex GW-invariants
represented by the diagrams in Figures~\ref{RM_fig}, ~\ref{LHS0_fig}, and~\ref{RHS0_fig}
and all splittings of $\{3,\ldots,k\}$ into three subsets $I^+,I^-,J$.
It would no longer be possible to merge~$I^+$ and~$I^-$ into a single subset,
as done in the proof of Theorem~\ref{main2_thm}.
For a related reason, Theorem~\ref{evenvan_thm} does not extend to 
GW-invariants with real marked points.

\begin{rmk}\label{relspin_rmk}
The homomorphism~\eref{fddfn_e} factors through a similar doubling homomorphism
$$\fd\!: H_2(X,X^{\phi};\Z)\lra H_2(X)_{\phi};$$ 
see \cite[Section~1]{Ge2}.
Let 
$$\prt\!:H_2(X,X^{\phi};\Z)\lra H_1(X^{\phi};\Z)$$ 
denote the boundary homomorphism.
By the proof of \cite[Theorem~6.5]{Ge2} and \cite[Corollary~5.9]{GZ2},
$\ov\fM_{0,k}(X,\be)^{\phi,\tau}$ is orientable if 
\begin{enumerate}[label=($\cO_{\tau}'$),leftmargin=*]
\item $X^{\phi}$ is orientable and there exist $\vp\!\in\!H^2(X;\Z_2)$ 
and $\ka\!\in\!H^1(X^{\phi};\Z_2)$ such that 
\begin{gather}
w_2(TX^{\phi})=\ka^2+\vp|_{X^{\phi}}\qquad\hbox{and}\notag\\
\label{vpcond_e}
\frac12\lr{c_1(X),\fd(\be')}+\lr{\vp,\fd(\be')}
+\lr{\ka,\prt\be'}\in 2\Z\quad\forall~\be'\!\in\!H_2(X,X_{\phi};\Z)\,.
\end{gather}
\end{enumerate}
The two requirements on~$(\vp,\ka)$ imply that $(X,X^{\phi})$ admits 
a relatively spin sub-structure in the sense of \cite[Definition~5.5]{GZ2}
and that it can be chosen so that the $(\phi,\tau)$-moduli space is orientable,
respectively.
The relatively spin condition of \cite[Theorem~8.1.1]{FOOO} is  
the $\ka\!=\!0$ case of the first requirement;
$(\cO_{\tau})$ is effectively the $\ka\!=\!0$, $\vp\!=\!w_2(E)$ case of~$(\cO_{\tau}')$.
Theorems~\ref{main2_thm} and~\ref{evenvan_thm} and Corollaries~\ref{inveq_crl} and~\ref{inveq_crl2}
can be extended with the assumption~$(\cO_{\tau})$ relaxed to~$(\cO_{\tau}')$.
The key difference is that this would introduce the sign $(-1)^{\lr{\vp,\be_1}}$
over~$\cN_{\be_1,\be_2}$ as happens in~\cite{GZ5}.
This sign can be absorbed into the numbers~\eref{GWdfn_e} whenever~\eref{bndcond_e} is satisfied, 
but this would result in a different dependence on the complex GW-invariants for
the $\tau$- and $\eta$-invariants.
It can be absorbed into the numbers~\eref{phinums_e} with $c\!=\!\tau$,
similarly to~\eref{realNdfn_e}, if $\phi^*\vp\!=\!\vp$;
this is the case for $\tau$-relatively spin structures in the sense of 
\cite[Definition~3.11]{FOOO9}.
We avoid such a sign modification in~\eref{phinums_e} by treating $(\P^{2n-1},\tau_{2n})$
as a special case of~$(\cO_{\tau})$.\\
\end{rmk}

\noindent
Throughout this section and Sections~\ref{Rmainpf_sec}-\ref{mainpf_sec},
the moduli spaces $\ov\fM_{0,k}(X,\be)^{\phi,c}$ and 
their glued, constrained, and complex versions refer to regularizations of these spaces. 
If $(X,\om,\phi)$ is semi-positive in the sense of \cite[Definition~1.2]{RealRT},
e.g.~$\P^{2n-1}$,
the latter are obtained by choosing a generic $J\!\in\!\cJ_{\om}^{\phi}$ and 
the invariants are defined through pairing with the pseudocycles 
determined by the moduli spaces.
Both proofs of Theorem~\ref{main2_thm}, outlined at the end of Section~\ref{intro_sec},
are completely geometric in this case
and have no relation to virtual fundamental class (VFC) constructions.
The situation with the first proof in the general case is analogous to 
that with the WDVV and Getzler's relations in complex GW-theory: 
as the relation of Proposition~\ref{DMrelR_prp} is universal (induced from the moduli of domains), 
its validity is independent of the choice of VFC construction and 
depends only on properties of GW-invariants any such construction must yield to be relevant. 
The relevant properties are the $g\!=\!0$ case of Kontsevich-Manin's axioms 2.2.0 ({\it Effectivity}), 
2.2.1 ({\it $S_n$-invariance}), 2.2.2 ({\it Grading}), 2.2.4 ({\it Divisor}), and 
2.2.6 ({\it Splitting}) in~\cite{KM}
and their real analogues ({\it Splitting} at interior nodes only). 
Suitable adaptations to the real case of the usual VFC constructions of \cite{LT,FO}
are carried out  in \cite[Section~7]{Sol}, \cite[Section~7]{FOOO9}, 
and \cite[Section~2.3]{Teh}.
The invariants arising from these adaptations satisfy the real analogues of the first
three axioms above for trivial reasons.
The proofs of the last two axioms in the complex case readily extend to the real case.

\section{A homology relation for $\R\ov\cM_{0,3}$}
\label{DMrelR_sec}

\noindent
In this section, we formulate and prove a codimension~2 relation on 
the Deligne-Mumford moduli space~$\R\ov\cM_{0,3}$ of real genus~0 curves 
with 3~pairs of marked points; see Proposition~\ref{DMrelR_prp}.
Its proof involves a detailed topological description of~$\R\ov\cM_{0,3}$.\\

\noindent
For $c\!=\!\tau,\eta$ and $k\!\in\!\Z^+$, 
denote by $\ov\cM_{0,k+1}^c$ the moduli space of $c$-real rational curves 
with $k\!+\!1$ conjugate pairs of marked points.
As it is convenient to designate one of the pairs as principal,
we index the pairs by the set $\{0,1,\ldots,k\}$ and 
view~0 as the principal index.
Thus, the main stratum of~$\ov\cM_{0,k+1}^c$ is the quotient~of
$$\big\{\big((z_0^+,z_0^-),(z_1^+,z_1^-),\ldots,(z_k^+,z_k^-)\big)\!:\,
z_i^{\pm}\!\in\!\P^1,\,z^+\!=\!c(z_i^-),\,
z_i^+\!\neq\!z_j^+,z_j^-~\forall\,i\!\neq\!j,\,
z_i^+\!\neq\!z_i^-\big\}$$
by the natural action of the subgroup $\PSL_2^c\C\!\subset\!\PSL_2\C$ 
of automorphisms of~$\P^1$ commuting with~$c$.
Many notions concerning~$\ov\cM_{0,k+1}^c$ are defined below with respect to 
the index~0, which in these cases is implicitly understood.\\

\noindent
The moduli spaces $\ov\cM_{0,k+1}^{\eta}$ and $\ov\cM_{0,k+1}^{\tau}$
are $(2k\!-\!1)$-dimensional manifolds with the same boundary,
$$\prt\ov\cM_{0,k+1}^{\tau}=\prt\ov\cM_{0,k+1}^{\eta}\,.$$
The latter  consists of the curves with no irreducible component fixed by the involution;
the strata of $\ov\cM_{0,k+1}^{\tau}$ with two invariant bubbles attached 
at a real node are of codimension~1, but not a boundary for this space. 
Gluing along the common boundary, we obtain the moduli space 
$$\ov\cM_{0,k+1}^{\R}\equiv \ov\cM_{0,k+1}^{\tau}\cup \ov\cM_{0,k+1}^{\eta}\,;$$ 
it is a $(2k\!-\!1)$-dimensional manifold without boundary.
We will use $\R\ov\cM_{0,k+1}$ to refer to any one of these three moduli spaces
and $(z_i^+,z_i^-)$ to denote the $i$-th conjugate pair of marked points.\\

\noindent
If $c\!=\!\tau,\eta$, $\ov\cM_{0,2}^c$ is a compact connected one-dimensional
manifold with boundary and is therefore an interval.
It has a canonical orientation induced by requiring the boundary point corresponding 
to the two-component curve with the marked points~$z_0^+$ and~$z_1^-$ on the same component
to be the initial point of the interval. 
An explicit orientation-preserving isomorphism is given by the cross-ratio
\begin{gather}\label{Mcisom_e}
\ov\cM_{0,2}^c\lra \bI\!\equiv\![0,\infty] , \qquad
\big[(z_0^+,z_0^-),(z_1^+,z_1^-)\big]\lra 
(-1)^c\,\frac{z_1^+\!-\!z_0^+}{z_1^-\!-\!z_0^+}:
\frac{z_1^+\!-\!z_0^-}{z_1^-\!-\!z_0^-}\,,\\
\hbox{where}\qquad
(-1)^c=\begin{cases} 1,&\hbox{if}~c\!=\!\tau; \\ 
-1,&\hbox{if}~c\!=\!\eta;\end{cases} \notag
\end{gather}
with $z_0^+\!=\!0$, the above element of $\ov\cM_{0,2}^c$ is sent to $|z_1^+|^2$.
For $k\!\ge\!2$, $\ov\cM_{0,k+1}^c$ is oriented using the first element in 
each conjugate pair~$(z_i^+,z_i^-)$ with $i\!\ge\!2$ to orient the general
fiber of the forgetful morphism $\ov\cM_{0,k+1}^c\!\lra\!\ov\cM_{0,2}^c$.
Since the boundaries of $\ov\cM_{0,k+1}^{\eta}$ and $\ov\cM_{0,k+1}^{\tau}$ 
are oriented in the same way, we obtain an orientation on~$\ov\cM_{0,k+1}^{\R}$
by reversing the orientation on~$\ov\cM_{0,k+1}^{\eta}$. 
An explicit orientation-preserving isomorphism of $\ov\cM_{0,2}^{\R}$ with 
$S^1\!\equiv\!\R\!\sqcup\!\{\i\}$ 
is given by the map in~\eref{Mcisom_e} with $(-1)^c$ dropped. 
The general fibers of the forgetful morphism $\ov\cM_{0,k+1}^{\R}\!\lra\!\ov\cM_{0,2}^{\R}$
are again oriented  using the first element in  each conjugate pair~$(z_i^+,z_i^-)$ 
with $i\!\ge\!2$.

\begin{lmm}\label{DMorient_lmm}
Let $k\!\in\!\Z^+$ and  let $\R\ov\cM_{0,k+1}$ denote
$\ov\cM_{0,k+1}^{\tau}$, $\ov\cM_{0,k+1}^{\eta}$, or  $\ov\cM_{0,k+1}^{\R}$.
\begin{enumerate}[label=(\arabic*),leftmargin=*]
\item For every $i\!=\!0,1,\ldots,k$, the automorphism of $\R\ov\cM_{0,k+1}$
interchanging the marked points in the $i$-th conjugate pair 
is orientation-reversing.
\item For all $i,j\!=\!0,1,\ldots,k$, the automorphism of $\R\ov\cM_{0,k+1}$
interchanging the $i$-th and $j$-th conjugate pairs of marked points
is orientation-preserving.
\end{enumerate}
\end{lmm}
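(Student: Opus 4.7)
The plan is to reduce both parts of the lemma to direct cross-ratio computations on $\R\ov\cM_{0,2}$ and $\R\ov\cM_{0,3}$, and then to lift to arbitrary $k$ via the orientation-defining forgetful morphism $\pi\!:\R\ov\cM_{0,k+1}\!\to\!\R\ov\cM_{0,2}$.

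For the base case $k\!=\!1$, I would plug directly into~\eref{Mcisom_e}. Swapping $z_0\!\leftrightarrow\!z_{\bar 0}$ (or $z_1\!\leftrightarrow\!z_{\bar 1}$) permutes the four linear factors in the cross-ratio in pairs and replaces it by its reciprocal; since $t\!\mapsto\!1/t$ reverses the orientation of $\bI\!=\![0,\infty]$, this gives part~(1). Swapping the conjugate pairs $(z_0,z_{\bar 0})$ and $(z_1,z_{\bar 1})$ negates each of the four linear factors, leaving the cross-ratio unchanged and giving part~(2). Both statements transfer to $\ov\cM_{0,2}^{\R}$ via gluing along the common boundary, since these involutions preserve that boundary and the $\eta$-component is glued with a reversed orientation that cancels consistently.

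For the inductive step $k\!\ge\!2$, I would classify each automorphism by how it interacts with the base pairs $\{0,1\}$ of~$\pi$. A pair-swap at $i\!\ge\!2$ acts trivially on the base and by the antiholomorphic involution~$c$ on one complex $\P^1$-factor of the fiber, hence reverses orientation; a pair-permutation among indices $\ge\!2$ acts trivially on the base and by a permutation of two complex factors of the fiber, hence preserves orientation. A pair-swap at $i\!\in\!\{0,1\}$, or the pair-permutation $(0\,1)$, acts only on the base and reduces to the $k\!=\!1$ calculation; in particular, the fiber coordinates $z_2,\ldots,z_k$ are untouched.

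The main obstacle is the pair-permutation $(0\,j)$ or $(1\,j)$ with $j\!\ge\!2$, which mixes base and fiber and thus does not respect~$\pi$. I would handle this by a group-theoretic reduction followed by one explicit chart computation. Since conjugation preserves orientation-parity, the identities $(0\,j)\!=\!(0\,1)(1\,j)(0\,1)$ and, for $j\!\ge\!3$, $(1\,j)\!=\!(2\,j)(1\,2)(2\,j)$, combined with the already-settled cases of $(0\,1)$ and of pair-permutations within $\{2,\ldots,k\}$, reduce the entire case to showing that $(1\,2)$ is orientation-preserving. Pulling back along the further forgetful map $\R\ov\cM_{0,k+1}\!\to\!\R\ov\cM_{0,3}$ (on whose fiber $(1\,2)$ acts trivially) then reduces to $k\!=\!2$. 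On the three-dimensional $\R\ov\cM_{0,3}$ I would work in a $\PSL_2^c(\C)$-slice (e.g.\ for $c\!=\!\tau$, fix $z_0\!=\!\fI$ and $z_{\bar 0}\!=\!-\fI$, and use the residual $S^1$-action to normalize~$z_1$) and compute the Jacobian of the change between the base coordinate coming from the $\{0,1\}$-forgetful map and that from the $\{0,2\}$-forgetful map; this is a short differentiation of cross-ratios that one verifies to be positive, finishing the proof.
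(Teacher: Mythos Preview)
Your proposal is correct and follows the same overall architecture as the paper: both reduce part~(1) to the cross-ratio on $\R\ov\cM_{0,2}$ plus a fiber analysis, and both reduce part~(2) to checking that the transposition of pairs~1 and~2 is orientation-preserving on $\R\ov\cM_{0,3}$. The paper's reductions are slightly more direct (it simply notes that the automorphisms in question preserve fibers of the relevant forgetful maps, without writing out the conjugation identities), but your group-theoretic reduction via $(0\,j)=(0\,1)(1\,j)(0\,1)$ and $(1\,j)=(2\,j)(1\,2)(2\,j)$ reaches the same endpoint.

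The genuine difference is in how the $(1\,2)$ case on $\R\ov\cM_{0,3}$ is settled. You propose a chart computation in the main stratum: fix a $\PSL_2^c\C$-slice and differentiate the two cross-ratios. The paper instead works at a boundary stratum~$\Ga$ consisting of three-component curves with $z_1$ and $z_2$ on the same bubble. There the two forgetful maps $f_1,f_2$ to $\R\ov\cM_{0,2}$ literally coincide on~$\Ga$, and the vertical tangent bundles of both are identified with the normal bundle of~$\Ga$, each carrying the complex orientation of the tangent line at the node; since these identifications agree, $f_1$ and $f_2$ induce the same orientation. Your approach is more elementary and self-contained; the paper's is computation-free and, more importantly, introduces exactly the objects (the strata~$\Ga_i$, $\Ga_{\bar i}$ and the line bundle~$L_\Ga$) that drive Lemma~\ref{DMgluing_lmm} and Proposition~\ref{DMrelR_prp}. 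If you intend to continue into those results, it is worth learning the boundary-stratum argument; if you only need Lemma~\ref{DMorient_lmm} in isolation, your Jacobian computation (which you should actually carry out rather than assert) is perfectly adequate.
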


\begin{proof}
(1) For $i\!=\!0,1$, this automorphism interchanges the two boundary points 
of $\ov\cM_{0,2}^c$ with $c\!=\!\tau,\eta$.
Thus, it is orientation-reversing on the base of the forgetful morphism
\BE{RMDforg_e}\R\ov\cM_{0,k+1}\lra\R\ov\cM_{0,2}\EE 
for every $k\!\ge\!1$.
Since this automorphism takes a general fiber of~\eref{RMDforg_e} to another general fiber
in an orientation-preserving way, it is orientation-reversing on~$\R\ov\cM_{0,k+1}$.
For $i\!\ge\!2$, the automorphism of $\R\ov\cM_{0,k+1}$ interchanging the marked points 
in the $i$-th conjugate pair  takes a general fiber of~\eref{RMDforg_e}
to itself in an orientation-reversing way.
Thus, it is again orientation-reversing on~$\R\ov\cM_{0,k+1}$.\\

\noindent
(2) If $i,j\!\le\!1$ or $i,j\!\ge\!2$, the automorphism of $\R\ov\cM_{0,k+1}$
interchanging $i$-th and $j$-th conjugate pairs of marked points 
takes a general fiber of~\eref{RMDforg_e} to itself in an orientation-preserving way
and so is orientation-preserving on~$\R\ov\cM_{0,k+1}$.
Thus, it remains to consider the case $i\!=\!1$ and $j\!=\!2$.
Since the corresponding automorphism of $\R\ov\cM_{0,k+1}$, with $k\!\ge\!2$,
takes a general fiber of the forgetful morphism $\R\ov\cM_{0,k+1}\!\lra\!\R\ov\cM_{0,3}$
to itself in an orientation-preserving way,
it is sufficient to check that it is orientation-preserving for $k\!=\!2$.
The latter is the case if and only~if the forgetful morphisms
\BE{f1f2_e}
f_1,f_2\!: \R\ov\cM_{0,3}\lra \R\ov\cM_{0,2}\,,\quad
\begin{aligned}
f_1\big([(z_0^+,z_0^-),(z_1^+,z_1^-),(z_2^+,z_2^-)]\big)&=
\big[(z_0^+,z_0^-),(z_1^+,z_1^-)\big],\\
f_2\big([(z_0^+,z_0^-),(z_1^+,z_1^-),(z_2^+,z_2^-)]\big)&= 
\big[(z_0^+,z_0^-),(z_2^+,z_2^-)\big],
\end{aligned}\EE
induce the same orientation on~$\R\ov\cM_{0,3}$.\\

\noindent
It is enough to check that $f_1$ and $f_2$ induce the same orientation on the tangent space
at a three-component curve~$\cC$ with~$z_1^+$ and~$z_2^+$ on 
the same bubble component~$\cC^{\C}$, i.e.~as in the first diagram in Figure~\ref{RM_fig},
but with the label~$0$ interchanged with~$2$ and the label~$\bar{0}$ interchanged with~$\bar{2}$.
The restrictions of~$f_1$ and~$f_2$ to the space~$\Ga$ of such curves are the same
and take~$\Ga$ isomorphically onto~$\R\ov\cM_{0,2}$;
thus, $f_1$ and~$f_2$ induce the same orientations on~$T_{\cC}\Ga$.
The vertical tangent bundles of~$f_1$ and~$f_2$ along~$\cC$ are canonically isomorphic
to the normal bundle of~$\Ga$ in~$\R\ov\cM_{0,3}$.
The orientation of the fiber of the vertical tangent bundle of~$f_1$ at~$\cC$ 
given by varying~$z_2^+$
is the complex orientation of the tangent node of the real component~$\cC^{\R}$
of~$\cC$ at the node separating~$\cC^{\R}$ from~$\cC^{\C}$.
The same is the case for the orientation of the vertical tangent bundle of~$f_2$ at~$\cC$ 
given by varying~$z_1^+$.
Thus, the orientations of the normal bundle of~$\Ga$ in~$\R\ov\cM_{0,3}$ 
with respect to the orientations induced by~$f_1$ and~$f_2$ are the same.
This implies that the orientations induced by~$f_1$ and~$f_2$ 
on $\R\ov\cM_{0,3}$ are the same as well.
\end{proof}

\noindent
For $i\!=\!1,2$, let $\Ga_i\!\subset\!\R\ov\cM_{0,3}$ denote 
the closure of the subset~$\mr\Ga_i$ consisting of the three-component real curves~$(\cC,c)$
such that the marked point~$z_i^+$ lies on the same component as the marked point~$z_0^+$.
Let $\Ga_{\bar{i}}\!\subset\!\R\ov\cM_{0,3}$ denote 
the closure of the subset~$\mr\Ga_{\bar{i}}$ consisting of the three-component real curves~$(\cC,c)$
such that the marked point~$z_i^-$ 
lies on the same component as the marked point~$z_0^+$.
The stability condition implies that such a three-component curve has
\begin{enumerate}[leftmargin=23pt]
\item[($\R$)] a component~$\cC^{\R}$ preserved by~$c$ and containing the conjugate pair 
$(z_{3-i}^+,z_{3-i}^-)$ of marked points and 
a conjugate pair $(z_{\bu}^+,z_{\bu}^-)$ of nodes, and
\item[($\C$)] a pair of conjugate components, with the component~$\cC^{\C}$ containing 
the marked point~$z_0^+$ also carrying the marked point~$z_i^+$ in the case of~$\Ga_i$ and 
$z_i^-$ in the case of~$\Ga_{\bar{i}}$;
\end{enumerate}
see Figures~\ref{fig_eta}-\ref{fig_real}.
We will take~$z_{\bu}^+\!\in\!\cC^{\R}$ to be the node identified with a point 
$z^{\C}\!\in\!\cC^{\C}$.
Thus, there are canonical isomorphisms
\BE{Gasplit_e}\Ga_i\approx \R\ov\cM_{0,2}\times \ov\cM_{0,3} \qquad\hbox{and}\qquad
\Ga_{\bar{i}}\approx \R\ov\cM_{0,2}\times \ov\cM_{0,3}^-\,,\EE
where the superscript $-$ indicates that one of the marked points 
(the one corresponding to~$z_i^-$) is decorated with the minus sign.
Following the principle introduced in~\cite{Ge2}, 
we define the canonical orientation of~$\ov\cM_{0,3}^-$ to be the opposite of 
the canonical (complex) orientation of~$\ov\cM_{0,3}$
and then use~\eref{Gasplit_e} to orient~~$\Ga_i$ and~$\Ga_{\bar{i}}$.
Thus, the orientation on~$\Ga_i$ is the same as the one induced by the natural isomorphism
$\Ga_i\!\approx\!\R\ov\cM_{0,2}$, while
the orientation on~$\Ga_{\bar{i}}$ is the opposite of the one induced by the natural isomorphism
$\Ga_{\bar{i}}\!\approx\!\R\ov\cM_{0,2}$.
The canonical orientation of $\R\ov\cM_{0,2}$ is defined above, but
this choice of the orientation does not  affect the validity of 
Lemma~\ref{DMgluing_lmm} or Proposition~\ref{DMrelR_prp}.
Whenever $\R\ov\cM_{0,3}\!=\!\ov\cM_{0,3}^c$ for a specific $c\!=\!\tau,\eta,\R$,
we will write $\Ga_*^c$, where  $*\!=\!i,\bar{i}$ with $i\!=\!1,2$, for~$\Ga_*$.\\

\noindent
With $\Ga\!=\!\Ga_i,\Ga_{\bar{i}}$, $i\!=\!1,2$,  as in~\eref{Gasplit_e}, let 
$$L_{\Ga}^{\R}\lra\R\ov\cM_{0,2} \qquad\hbox{and}\qquad 
L_{\Ga}^{\C}\lra  \ov\cM_{0,3},\ov\cM_{0,3}^-$$
be the universal tangent line bundles 
at the marked points~$z_{\bu}^+$ and $z^{\C}$, respectively, and
$$L_{\Ga}=\pi_1^*L_{\Ga}^{\R}\otimes_{\C} \pi_2^*L_{\Ga}^{\C}\lra \Ga\,,$$
where $\pi_1,\pi_2$ are the component projection maps.

\begin{lmm}\label{DMgluing_lmm}
Let $\R\ov\cM_{0,3}$ denote
$\ov\cM_{0,3}^{\tau}$, $\ov\cM_{0,3}^{\eta}$, or  $\ov\cM_{0,3}^{\R}$.
\begin{enumerate}[label=(\arabic*),leftmargin=*]

\item For $i\!=\!1,2$, the automorphism of $\R\ov\cM_{0,3}$
interchanging the marked points in the $i$-th conjugate pair 
restricts to an orientation-reversing isomorphism from~$\Ga_i$
to~$\Ga_{\bar{i}}$ and canonically lifts to 
a $\C$-linear isomorphism from~$L_{\Ga_i}$ to~$L_{\Ga_{\bar i}}$.

\item The automorphism of $\R\ov\cM_{0,3}$ interchanging the 1st  and 2nd conjugate 
pairs of marked points  restricts to an orientation-preserving isomorphism from~$\Ga_1$
to~$\Ga_2$ and canonically lifts to 
a $\C$-linear isomorphism from $L_{\Ga_1}$  to $L_{\Ga_2}$.

\item For $i\!=\!1,2$,
the oriented normal bundle of $\mr\Ga\!=\!\mr\Ga_i,\mr\Ga_{\bar i}$ in $\R\ov\cM_{0,3}$ 
is isomorphic to $L_{\Ga}$ with its canonical complex orientation.
\end{enumerate}
\end{lmm}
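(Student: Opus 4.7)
The plan is to establish part~(3) first, since parts~(1) and~(2) will then follow formally by combining part~(3) with Lemma~\ref{DMorient_lmm} and the observation that the described automorphisms lift canonically and $\C$-linearly to the line bundles $L_{\Ga}$.

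For part~(3), I fix a real curve $[\cC,c]\!\in\!\mr\Ga$ with real component $\cC^{\R}$, complex component $\cC^{\C}$ containing $z_0$, and conjugate pair of nodes $(z^{\R},c(z^{\R}))$ identified with $(z^{\C},c(z^{\C}))$. Deformations of $[\cC,c]$ inside $\Ga$ split, by~\eref{Gasplit_e}, as deformations of $(\cC^{\R};\,z_{3-i},z_{\overline{3-i}},z^{\R},c(z^{\R}))$ inside $\R\ov\cM_{0,2}$ together with deformations of the complex bubble marked by $(z_0,z_i,z^{\C})$ or $(z_0,z_{\bar i},z^{\C})$. The remaining deformations inside $\R\ov\cM_{0,3}$ come from smoothing the conjugate pair of nodes; $c$-equivariance forces the smoothing on the conjugate node to be determined by the smoothing on $(z^{\R},z^{\C})$. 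Standard deformation theory of nodal curves identifies this single smoothing parameter with an element of $T_{z^{\R}}\cC^{\R}\otimes_{\C}T_{z^{\C}}\cC^{\C}$, giving a natural $\C$-linear identification of the normal bundle of $\mr\Ga$ with $L_{\Ga}$. The orientation part of the statement will then reduce to checking that, once $\Ga$ is oriented via~\eref{Gasplit_e} (with the sign convention from $\ov\cM_{0,3}^-$ in the case $\Ga\!=\!\Ga_{\bar i}$), the product orientation with the complex orientation of $L_{\Ga}$ matches the orientation of $\R\ov\cM_{0,3}$ induced by the forgetful map to $\R\ov\cM_{0,2}$ and the cross-ratio~\eref{Mcisom_e}.

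For part~(1), the automorphism $\si_i$ relabeling $z_i\!\leftrightarrow\!z_{\bar i}$ sends a curve in $\mr\Ga_i$, whose distinguished complex component carries $z_0$ and (the old) $z_i$, to a curve whose distinguished complex component now carries $z_0$ and (the new) $z_{\bar i}$, i.e., to an element of $\mr\Ga_{\bar i}$. Since the marked points on the real component are untouched, $\si_i$ acts as the identity on the $\R\ov\cM_{0,2}$-factor of~\eref{Gasplit_e}; the principal complex node $z^{\C}$ is physically unchanged, so the induced map on normal bundles is the identity on $T_{z^{\R}}\cC^{\R}\otimes_{\C}T_{z^{\C}}\cC^{\C}$, which is tautologically $\C$-linear. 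Since this lift is $\C$-linear, it preserves the complex orientation on the normal bundle; since by Lemma~\ref{DMorient_lmm}(1) $\si_i$ reverses the orientation of $\R\ov\cM_{0,3}$, part~(3) then forces $\si_i\!:\Ga_i\!\lra\!\Ga_{\bar i}$ to be orientation-reversing. Part~(2) is analogous: the swap of the first two conjugate pairs carries $\Ga_1$ to $\Ga_2$ by inspection, lifts $\C$-linearly to $L_{\Ga_1}\!\to\!L_{\Ga_2}$ via the unchanged identification of the principal complex node, and by Lemma~\ref{DMorient_lmm}(2) preserves the ambient orientation, so part~(3) forces it to preserve the orientations on $\Ga_1$ and $\Ga_2$.

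The hard part will be the orientation check in part~(3). The sign-reversal in the convention $\ov\cM_{0,3}^-$ used to orient $\Ga_{\bar i}$ has to be reconciled with the complex orientation of the smoothing parameter and with the canonical orientation of $\R\ov\cM_{0,3}$ defined via the cross-ratio~\eref{Mcisom_e} and the forgetful morphism. This amounts to writing the smoothing in local coordinates around the conjugate pair of nodes, computing the effect on the cross-ratio of a one-parameter family of smoothed curves in $\ov\cM_{0,2}^{\tau}$ or $\ov\cM_{0,2}^{\eta}$, and tracking the signs coming from the factor $(-1)^c$ in~\eref{Mcisom_e} as well as from the $-$ decoration on $\ov\cM_{0,3}^-$; the appearance of these two independent sign conventions, and the need to match them, is what makes this step delicate.
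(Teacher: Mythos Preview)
Your approach is valid in principle but runs in the opposite logical order from the paper, and this costs you the very simplification the paper exploits. The paper proves (1) and (2) \emph{first}, directly from the orientation conventions on~$\Ga_i$ and~$\Ga_{\bar i}$ given by~\eref{Gasplit_e}: since these automorphisms act as the identity on the $\R\ov\cM_{0,2}$-factor and the only difference between the orientations of~$\Ga_i$ and~$\Ga_{\bar i}$ is the sign flip built into~$\ov\cM_{0,3}^-$, the orientation claims in (1) and (2) are immediate from the definitions, with no appeal to~(3) or to Lemma~\ref{DMorient_lmm}. The paper then proves~(3) for a \emph{single} case, $\Ga\!=\!\Ga_2$, and this case is essentially tautological: the orientation on $\R\ov\cM_{0,3}$ is defined via the forgetful map~$f_1$ in~\eref{f1f2_e}, $f_1|_{\Ga_2}$ is an orientation-preserving isomorphism onto $\R\ov\cM_{0,2}$, and the vertical tangent bundle of~$f_1$ along~$\mr\Ga_2$, oriented by varying~$z_2$, is exactly $L_{\Ga_2}^{\C}\!\approx\!L_{\Ga_2}$ with its complex orientation. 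The remaining three cases of~(3) then follow from (1), (2), and Lemma~\ref{DMorient_lmm}, reversing the deduction you propose.

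The practical upshot is that the ``hard part'' you describe---the explicit cross-ratio computation tracking the $(-1)^c$ factor and the $\ov\cM_{0,3}^-$ sign across all four~$\Ga$'s---is never needed. By anchoring~(3) at~$\Ga_2$, where the orientation of the ambient space was literally \emph{defined} by the fiber of~$f_1$, the paper gets one case of~(3) for free and propagates it via the already-proved (1) and~(2). Your route would work, but you have left the actual sign computation undone, and carrying it out in all four cases is more labor than the paper's argument requires.
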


\begin{proof}
(1,2)  It is immediate that the automorphism in~(1) interchanges $\Ga_i$ and~$\Ga_{\bar{i}}$
and  the automorphism in~(2) interchanges $\Ga_1$ and~$\Ga_2$.
These restrictions respect the component moduli spaces in~\eref{Gasplit_e}
and induce the identity on the first component (the second component is a point).
Given our choice of orientations, 
the domain and target orientations of the automorphism in~(1) are opposite,
while the domain and target orientations of the automorphism  in~(2) are the same.
This implies the first parts of the first two statements in the lemma.
Since these automorphisms respect the component moduli spaces in~\eref{Gasplit_e},
they canonically lift to all universal tangent line bundles for these moduli spaces
and thus to~$L_{\Ga_i}$. 
They act by the identity on the tangent spaces at~$z_{\bu}^+$ and~$z^{\C}$
and thus $\C$-linearly on~$L_{\Ga}$.\\

\noindent
(3) The restriction of the forgetful morphism $f_1$ in~\eref{f1f2_e}
to~$\Ga_2$ is an orientation-preserving isomorphism.
By the definition of the orientation on~$\R\ov\cM_{0,3}$,
the vertical tangent bundle along~$\mr\Ga_2$ is thus oriented by the complex
orientation of $L_{\Ga_2}^{\C}\!\approx\!L_{\Ga_2}$.
Since the vertical tangent bundle  of~$f_1$ along~$\mr\Ga_2$ is canonically isomorphic
to the normal bundle of~$\Ga_2$ in~$\R\ov\cM_{0,3}$,
this implies the last statement of the lemma in the $\mr\Ga\!=\!\mr\Ga_2$ case. 
The remaining three cases follow from this case, the first two statements of the lemma,
and the $k\!=\!2$ case of Lemma~\ref{DMorient_lmm}.
\end{proof}

\begin{prp}\label{DMrelR_prp}
Let $\R\ov\cM_{0,3}$ denote
$\ov\cM_{0,3}^{\tau}$, $\ov\cM_{0,3}^{\eta}$, or  $\ov\cM_{0,3}^{\R}$.
The submanifolds $\Ga_1,\Ga_{\bar{1}},\Ga_2,\Ga_{\bar{2}}$ of  $\R\ov\cM_{0,3}$
determine relative cycles  in $(\R\ov\cM_{0,3},\prt\R\ov\cM_{0,3})$ and
\BE{DMrelR_e}[\Ga_1]+[\Ga_{\bar{1}}]=[\Ga_2]+[\Ga_{\bar{2}}]\in 
H_1\big(\R\ov\cM_{0,3},\prt\R\ov\cM_{0,3};\Q\big).\EE
\end{prp}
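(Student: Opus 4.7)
The plan is to first verify that each $\Ga_*$ is a well-defined relative 1-cycle, then to realize the identity as the pullback of the classical codimension-one relation on $\ov\cM_{0,4}$ via a suitable forgetful morphism.

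For the first step, by~\eref{Gasplit_e} each $\Ga_*$ is diffeomorphic to $\R\ov\cM_{0,2}$, which is $S^1$ for $c\!=\!\R$ and a closed interval for $c\!=\!\tau,\eta$. In the circle case $\Ga_*$ is already closed; in the interval case its boundary parametrizes further degenerations of~$\cC^{\R}$ into two real components joined at a real node. Combined with the conjugate bubble pair already present in~$\Ga_*$, this yields four-component real curves containing a real node, hence points of~$\prt\R\ov\cM_{0,3}$, so $\prt\Ga_*\!\subset\!\prt\R\ov\cM_{0,3}$.

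For the relation itself, I pull back the $H_0$-identity on $\ov\cM_{0,4}\!\cong\!\P^1$ via the forgetful morphism
$$f\!:\R\ov\cM_{0,3}\lra\ov\cM_{0,4}, \qquad
\bigl[\cC,(z_0,c(z_0)),(z_1,c(z_1)),(z_2,c(z_2))\bigr]\longmapsto \bigl[\cC,z_0,c(z_0),z_1,z_2\bigr],$$
that forgets the real structure together with $c(z_1)$ and $c(z_2)$. The three nodal divisors $D_{0\bar 0|12}$, $D_{01|\bar 02}$, $D_{02|\bar 01}$ of $\ov\cM_{0,4}$ are all homologous, being distinct points of a connected 2-manifold. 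A direct analysis (contracting the unstable bubbles that arise in each $\Ga_*$ after forgetting $c(z_1),c(z_2)$) gives
$$f^{-1}(D_{01|\bar 02})\supset \Ga_1\cup\Ga_{\bar2}, \qquad f^{-1}(D_{02|\bar 01})\supset \Ga_2\cup\Ga_{\bar1};$$
the remaining codimension-2 strata of~$\R\ov\cM_{0,3}$ either map to the third divisor or into the interior of~$\ov\cM_{0,4}$. Choosing a generic path $\gamma\!\subset\!\ov\cM_{0,4}$ from $D_{01|\bar02}$ to $D_{02|\bar01}$ avoiding $D_{0\bar0|12}$, the preimage $\Sigma\!=\!f^{-1}(\gamma)$ is a 2-chain whose boundary identifies, modulo chains in~$\prt\R\ov\cM_{0,3}$, the unsigned unions $\Ga_1\!\cup\!\Ga_{\bar2}$ and $\Ga_2\!\cup\!\Ga_{\bar1}$. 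Invoking the $\ov\cM_{0,3}^-$ convention of~\eref{Gasplit_e}, which flips the orientation of $\Ga_{\bar i}$ relative to its unsigned preimage, this rewrites as~\eref{DMrelR_e}.

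The hard part will be the orientation bookkeeping: converting the naive $f^{-1}(\gamma)$-boundary identity into the signed form~\eref{DMrelR_e} demands comparing the complex orientation on $\ov\cM_{0,4}$, pulled back to~$\Sigma$ via~$f$, with the orientations prescribed in~\eref{Gasplit_e}, and tracking the cumulative sign introduced by the $\ov\cM_{0,3}^-$ decoration on~$\Ga_{\bar1}$ and~$\Ga_{\bar2}$. Lemma~\ref{DMgluing_lmm} supplies the key inputs: the normal bundle to each $\mr\Ga_*$ carries its canonical complex orientation through~$L_\Ga$; the pair-swap automorphism acts $\C$-linearly on~$L_\Ga$ while reversing the ambient orientation, so exchanging $\Ga_i$ with $\Ga_{\bar i}$ contributes a sign; and the $1\!\leftrightarrow\!2$ pair-exchange automorphism preserves orientation while intertwining $\Ga_1$ with $\Ga_2$ and $\Ga_{\bar1}$ with $\Ga_{\bar2}$. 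A secondary technical point is transversality of~$\gamma$ to the images of the auxiliary codimension-2 strata (notably those with $(z_0,c(z_0))$ on~$\cC^{\R}$); a generic~$\gamma$ meets these transversely, and any residual contributions are absorbed into~$\prt\R\ov\cM_{0,3}$.
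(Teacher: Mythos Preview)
Your approach is genuinely different from the paper's. The paper proves Proposition~\ref{DMrelR_prp} by a direct topological analysis: it studies $\R\ov\cM_{0,3}$ as a singular fibration over $\R\ov\cM_{0,2}\!\approx\!\bI$ (or~$S^1$), explicitly describes the singular fibers, observes that $\Ga_1\!-\!\Ga_{\bar2}\!+\!\Ga_{\bar1}\!-\!\Ga_2$ bounds in the long exact sequence of the pair, and then shows the resulting absolute $1$-cycle is null-homologous by homotoping it into a fiber with trivial rational~$H_1$. Your pullback-from-$\ov\cM_{0,4}$ strategy is instead the Deligne--Mumford-level analogue of the paper's \emph{second} proof (Section~\ref{mainpf_sec}), which carries out exactly this argument at the level of maps via $f_{012\bar0}$; see Lemma~\ref{sign0_lmm}. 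So your route is viable and the paper in effect confirms it works, but you should be aware it is not how Proposition~\ref{DMrelR_prp} itself is argued.

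That said, there are two concrete problems with your writeup. First, your description of $\prt\Ga_*$ is wrong on both counts: the endpoints of the interval $\R\ov\cM_{0,2}$ correspond to $\cC^{\R}$ degenerating into a \emph{conjugate pair} of spheres (not two invariant components at a real node), and $\prt\R\ov\cM_{0,3}$ consists precisely of curves with \emph{no} invariant component --- a real node does not put a curve on the boundary. The conclusion $\prt\Ga_*\!\subset\!\prt\R\ov\cM_{0,3}$ is still correct, but for the opposite reason from the one you give. Second, your orientation bookkeeping is not just ``hard'' but is where the actual content lies, and you have not identified the mechanism. The point is that on $\Ga_{\bar i}$ the bubble $\cC^{\C}$ carrying $z_0$ becomes unstable after forgetting $c(z_1),c(z_2)$ and contracts, so the node of the image in $\ov\cM_{0,4}$ is the \emph{conjugate} node; consequently $\tnd f$ on the normal line $L_{\Ga_{\bar i}}$ is $\C$-\emph{antilinear}, hence orientation-reversing. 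This is exactly the computation behind Lemma~\ref{sign0_lmm}(1)--(2). Combined with Lemma~\ref{DMgluing_lmm}(3) and the $\ov\cM_{0,3}^-$ convention, this yields $f^!([1,1])=[\Ga_1]-[\Ga_{\bar2}]$ and $f^!([1,0])=[\Ga_2]-[\Ga_{\bar1}]$, which rearranges to~\eref{DMrelR_e}. You should also note that the $2$-dimensional boundary spheres $S_{1\bar2},S_{\bar12}$ map \emph{entirely} to the nodal points of $\ov\cM_{0,4}$, so your ``generic path'' preimage is not a clean $2$-chain near its ends; working instead with nearby regular values (or with $f^*\Om_{0,4}$ via Poincar\'e--Lefschetz duality) avoids this.
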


\noindent
Since $\prt\Ga_i$ and $\prt\Ga_{\bar{i}}$ are contained in $\prt\R\ov\cM_{0,3}$,
only the second statement of this proposition remains to be established.
The relation~\eref{DMrelR_e} in fact holds over~$\Z$; 
though we do not need this stronger statement, we give two separate reasons 
for it in Remarks~\ref{3term_rmk} and~\ref{DMstr_rmk}.

\begin{proof}[{\bf{\emph{Proof for $\ov\cM_{0,3}^{\eta}$}}}]
The boundary of $\ov\cM_{0,3}^{\eta}$ has four components, which we denote by
$S_{12}$, $S_{1\bar2}$, $S_{\bar12}$, and $S_{\bar1\bar2}$,
which contain the two-component curves with the points $\{z_1^+,z_2^+\}$,  
$\{z_1^+,z_2^-\}$, $\{z_1^-,z_2^+\}$, and $\{z_1^-,z_2^-\}$,
respectively, on the same component as the base point~$z_0^+$;
each of them is isomorphic to~$S^2$.
The forgetful~morphism
\BE{etaforg_e}\ov\cM_{0,3}^{\eta}\lra \ov\cM_{0,2}^{\eta}\approx \bI\equiv [0,\i]\EE
is a singular fibration; see Figure~\ref{fig_eta}. 
The fiber over every interior point is a sphere with four special points corresponding 
to the strata where $z_2^+$ collides with $z_0^+$, $z_0^-$, $z_1^+$, or~$z_1^-$. 
The fiber over the boundary point $0\!\in\!\bI$ consists of the spheres~$S_{12}$ and~$S_{1\bar2}$
joined together by the interval~$\Ga_1^{\eta}$ defined above.
The fiber over the boundary point $\i\!\in\!\bI$ consists 
of the spheres~$S_{\bar12}$ and~$S_{\bar1\bar2}$
joined together by the interval~$\Ga_{\bar{1}}^{\eta}$.
The lines~$\Ga_2^{\eta}$ and~$\Ga_{\bar2}^{\eta}$ connect the boundary spheres in the two fibers:
$S_{12}$ with $S_{\bar12}$ and~$S_{1\bar2}$ with~$S_{\bar1\bar2}$, respectively.\\

\begin{figure}
\begin{center}
\leavevmode
\includegraphics[width=0.8\textwidth ]{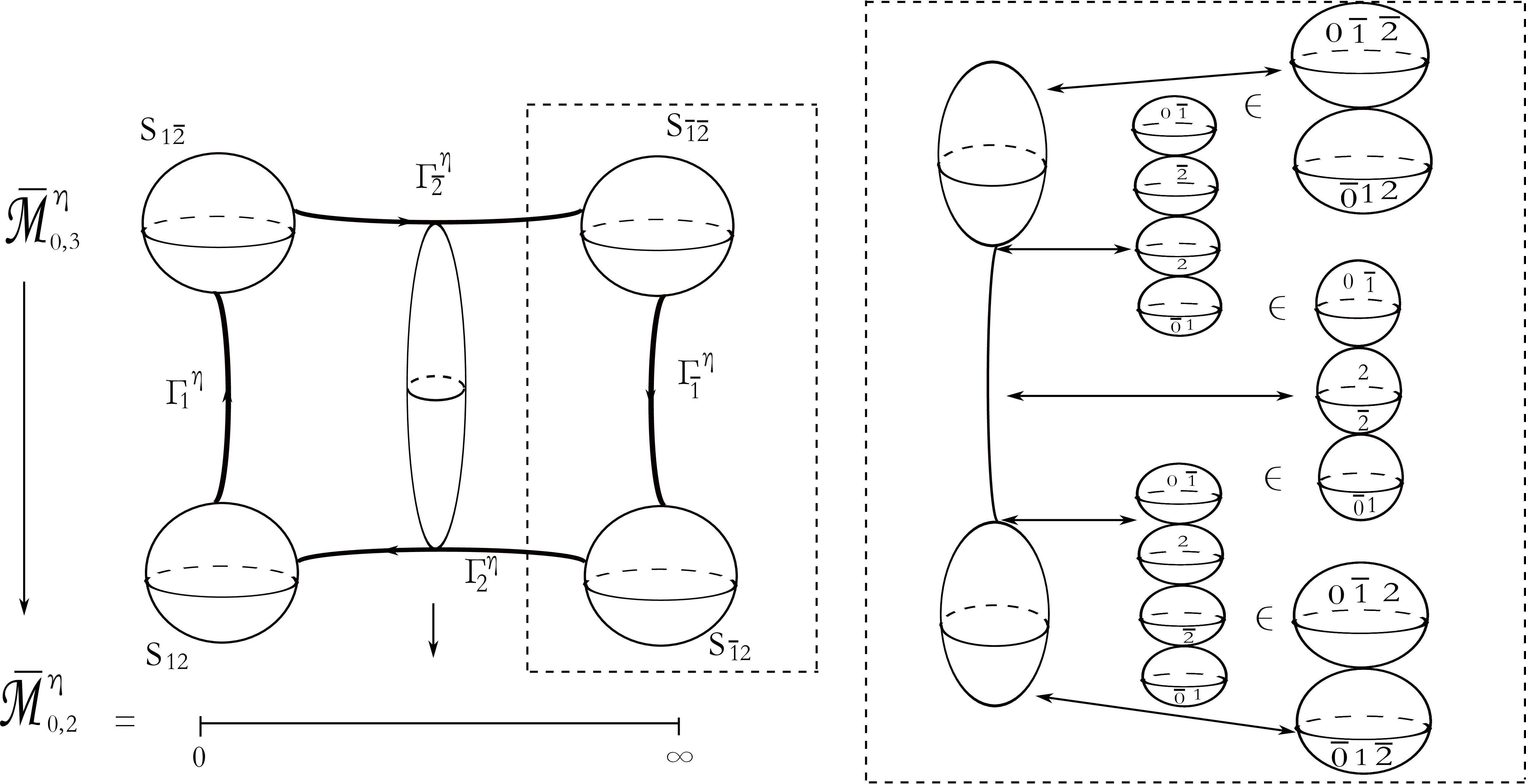}
\end{center}
\caption{The moduli space $\ov\cM_{0,3}^\eta$ as a fibration over $\ov\cM_{0,2}^\eta$;
the labels $i$ and $\bar{i}$ indicate 
the marked points $z_i^+$ and $z_i^-$, respectively.}
\label{fig_eta}
\end{figure}

\noindent
Let 
$$H_1(\ov\cM_{0,3}^{\eta})\stackrel{j_*}{\lra}
H_1(\ov\cM_{0,3}^{\eta},\prt\ov\cM_{0,3}^{\eta})
\stackrel{\prt}{\lra} H_0(\prt\ov\cM_{0,3}^{\eta})$$
denote the homomorphisms in the homology long exact sequence for the pair
$(\ov\cM_{0,3}^{\eta},\prt\ov\cM_{0,3}^{\eta})$.
With the canonical orientations on~$\Ga_{i}^{\eta}$ and~$\Ga_{\bar{i}}^{\eta}$ 
described above 
$$\prt\big[\Ga_1^{\eta}-\Ga_{\bar2}^{\eta}+\Ga_{\bar1}^{\eta}-\Ga_2^{\eta}\big]=0;$$
see Figure~\ref{fig_eta}.
Thus, $[\Ga_1^{\eta}\!-\!\Ga_{\bar2}^{\eta}\!+\!\Ga_{\bar1}^{\eta}\!-\!\Ga_2^{\eta}]$ 
is the image of an element of $H_1(\ov\cM_{0,3}^{\eta})$ under~$j_*$.
A representative~$\Ga^{\eta}$ for this class is obtained by connecting the end points 
of the line segments inside each boundary sphere.
This loop can be homotoped away from the fibers over $0,\i\!\in\!\bI$ 
by smoothing out the nodes.   
The resulting loop in~$S^2\!\times\!\R^+$ is therefore contractible and hence 
is trivial in $H_1(\ov\cM_{0,3}^{\eta})$.
This implies~\eref{DMrelR_e} in the $\eta$~case.
\end{proof}

\begin{proof}[{\bf{\emph{Proof for $\ov\cM_{0,3}^{\tau}$}}}]
In comparison with~\eref{etaforg_e}, the forgetful morphism
$$\ov\cM_{0,3}^{\tau}\lra \ov\cM_{0,2}^{\tau}\approx \bI\equiv[0,\i]$$
has an additional singular value: the point $1\!\in\!\bI$ 
corresponding to the two-component curve with~$z_0^+$ and~$z_1^+$ on separate invariant bubbles; 
see Figure~\ref{fig_tau}. 
The fiber~$F_1$ over this point consists of two copies of~$\R\P^2$ joined along
a non-contractible circle in each copy or equivalently the quotient of~$S^2$
by the action of the antipodal map on the equator only.
The complement of the common circle in one copy of~$\R\P^2$ consists of
the two-component curves, with each component fixed by the involution,
with~$z_2^+$ on the same component as~$z_0^+$;
the complement in the other copy consists of
the two-component curves, with each component fixed by the involution,
with~$z_2^+$ on the same component as~$z_1^+$.
The circle corresponds to the three-component curves
with each component fixed by the involution and $z_2^+$ on the middle component;
see Figure~\ref{fig_tau}.\\

\begin{figure}
\begin{center}
\leavevmode
\includegraphics[width=0.8\textwidth  ]{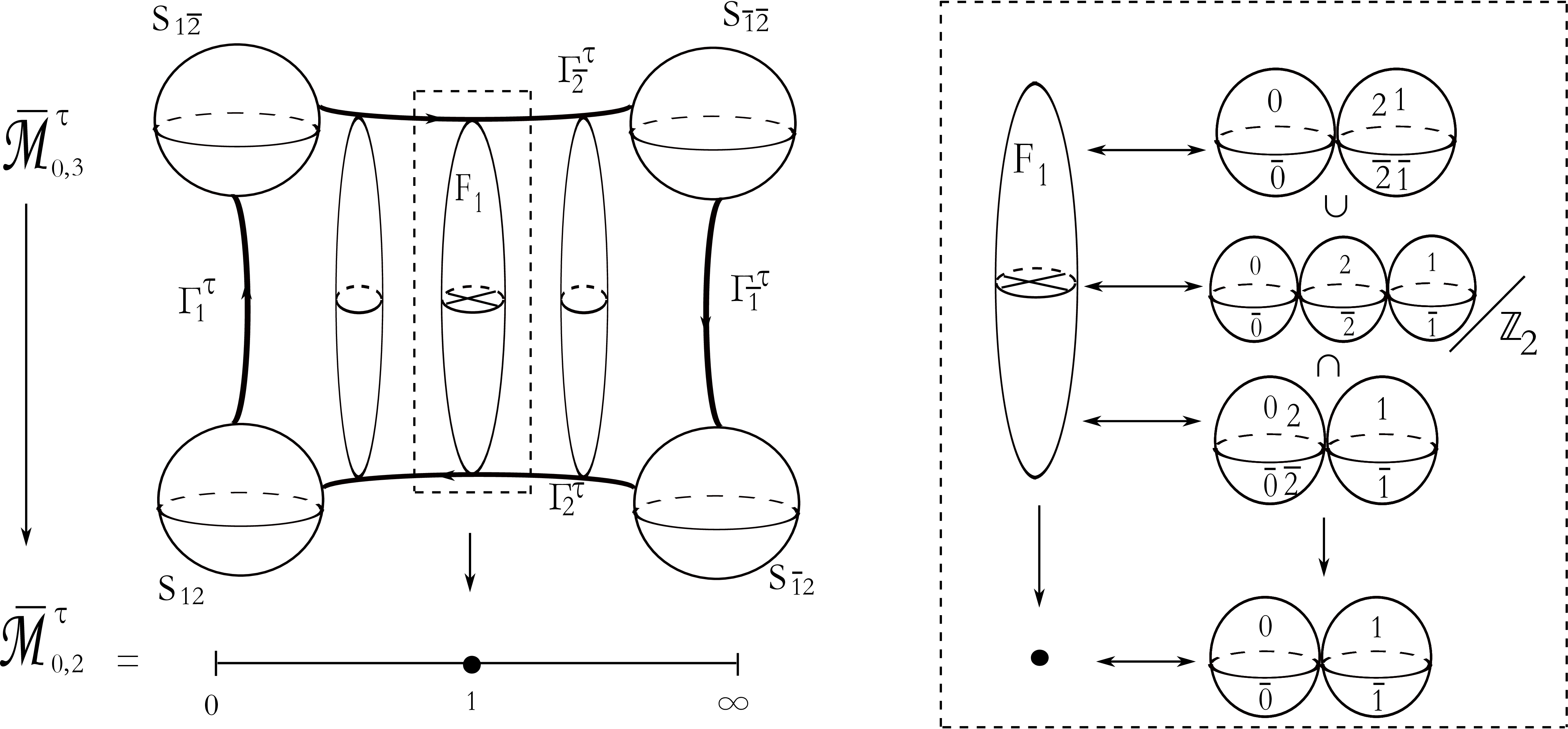}
\end{center}
\caption{The moduli space $\ov\cM_{0,3}^\tau$ as a fibration over $\ov\cM_{0,2}^\tau$;
the labels $i$ and $\bar{i}$ indicate 
the marked points $z_i^+$ and $z_i^-$, respectively.}
\label{fig_tau}
\end{figure}

\noindent
By the same reasoning as in the $\eta$ case, the class
$[\Ga_1^{\tau}\!-\!\Ga_{\bar2}^{\tau}\!+\!\Ga_{\bar1}^{\tau}\!-\!\Ga_2^{\tau}]$
is in the image of an element in $H_1(\ov\cM_{0,3}^{\tau})$
which can be represented by a loop in $\ov\cM_{0,3}^{\tau}$ away 
from the fibers over $0,\i\in\!\bI$. 
This loop can be homotoped to a loop in the special fiber~$F_1$.
Since $\pi_1(F_1)\!\approx\!\Z_2$,  it still represents the zero class 
in~$H_1(\ov\cM_{0,3}^{\tau})$ with $\Q$-coefficients.
\end{proof}

\begin{proof}[{\bf{\emph{Proof for $\ov\cM_{0,3}^{\R}$}}}]
The fibers of the forgetful morphism
\BE{DMforgR_e} \ov\cM_{0,3}^{\R}\lra \ov\cM_{0,2}^{\R}\approx S^1\EE
away from the identification points of $\ov\cM_{0,2}^{\eta}$
and $\ov\cM_{0,2}^{\tau}$
are as described in the $\eta,\tau$ cases; see Figure~\ref{fig_real}.
A fiber over either of the two identification points, $0,\i\!\in\!\bI$, 
consists of two spheres joined by a circle.
The submanifolds~$\Ga_*^{\R}$ with $*\!=\!1,\bar1,2,\bar2$ form 4 loops in~$\ov\cM_{0,3}^{\R}$:
$$\Ga_1^{\R}=\Ga_1^{\tau}-\Ga_1^\eta\, , \qquad
\Ga_{\bar1}^{\R}=\Ga_{\bar1}^{\tau}-\Ga_{\bar1}^\eta\,,\qquad
\Ga_2^{\R}=\Ga_2^{\tau}-\Ga_2^\eta\, , \qquad
\Ga_{\bar2}^{\R}=\Ga_{\bar2}^{\tau}-\Ga_{\bar2}^\eta\,.$$
Connecting the points of these loops on each of the four spheres by paths as before, 
we obtain the loops
$\Ga^{\eta}\!\subset\!\ov\cM_{0,3}^{\eta}$ and 
$\Ga^{\tau}\!\subset\!\ov\cM_{0,3}^{\tau}$ as in the $\eta,\tau$ cases above 
so~that 
$$\big[\Ga_1^{\R}-\Ga_{\bar2}^{\R}+\Ga_{\bar1}^{\R}-\Ga_2^{\R}\big]
=[\Ga^{\tau}]-[\Ga^{\eta}].$$
By the $\eta,\tau$ cases above, $[\Ga^{\eta}]$ and $[\Ga^{\tau}]$
are zero  in $H_1(\ov\cM_{0,3}^{\eta})$ and 
$H_1(\ov\cM_{0,3}^{\tau})$, respectively.
\end{proof}

\begin{figure}
\begin{center}
\leavevmode
\includegraphics[width=0.6\textwidth ]{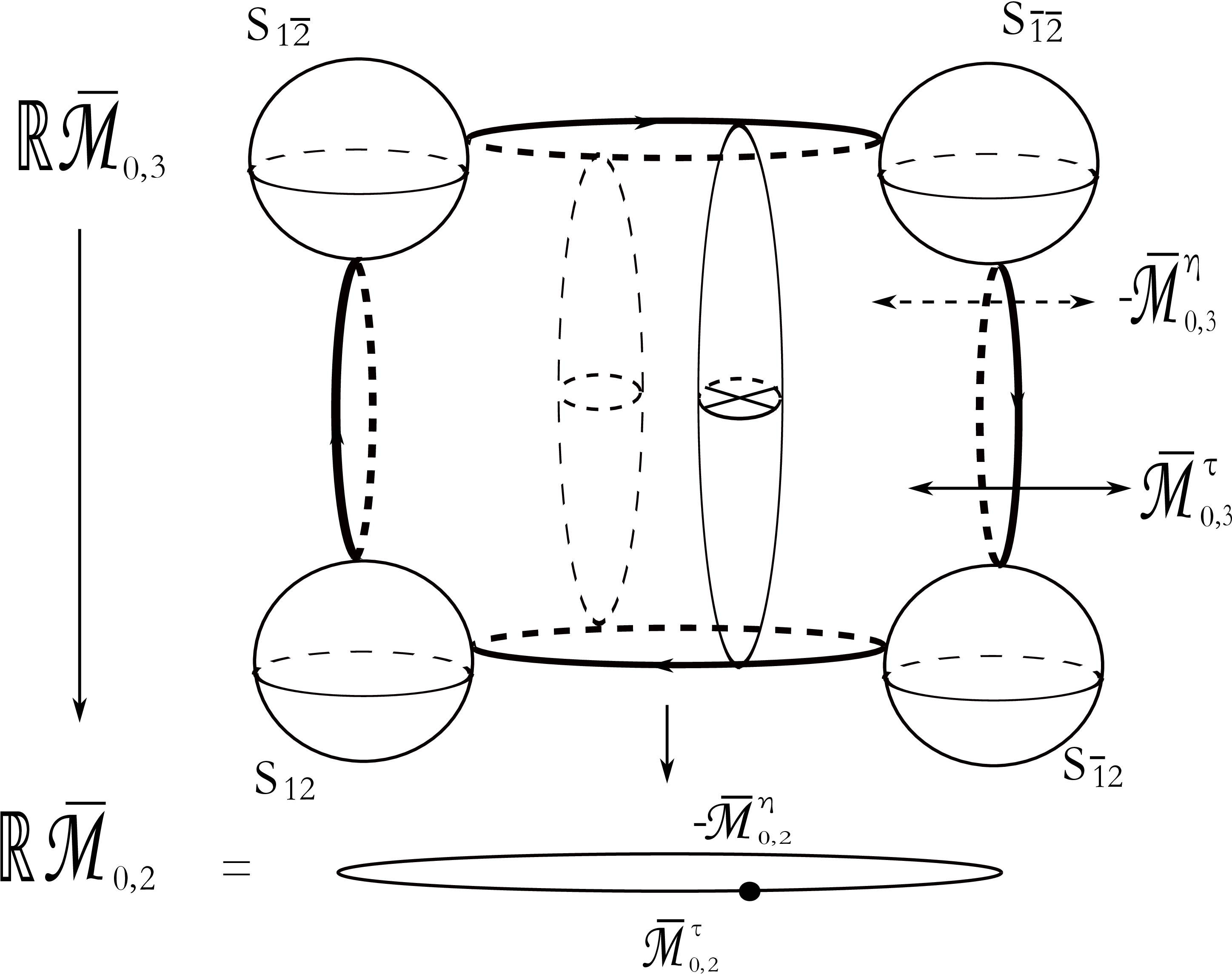}
\end{center}
\caption{The moduli space $\R\ov\cM_{0,3}$ as a fibration over $\R\ov\cM_{0,2}$;
the labels $i$ and $\bar{i}$ indicate the marked points $z_i^+$ and $z_i^-$, respectively.}
\label{fig_real}
\end{figure}

\begin{rmk}\label{3term_rmk}
The same argument can be used to obtain 3-term relations in 
$H_1(\R\ov\cM_{0,3},\prt\R\ov\cM_{0,3})$ by going diagonally in 
Figures~\ref{fig_eta}-\ref{fig_real}.
While these relations are nominally stronger than~\eref{DMrelR_e},
we do not see any applications for them at this point and 
they have a less appealing appearance than~\eref{DMrelR_e}.
On the other hand, they can be used to conclude  that~\eref{DMrelR_e}
holds over~$\Z$ as follows.
Let $\al$ denote a nontrivial loop in the fiber~$F_1$ in the proof of
the $\tau$ case of Proposition~\ref{DMrelR_prp}.
The loops formed by the upper left and lower right triangles equal
to $\ve_l\al$ and $\ve_r\al$ in homology, for some $\ve_l,\ve_r\!\in\!\{0,1\}$. 
Pulling back the loops to $\ov\fM_{0,3}^{\tau}(\P^2,1)$ by~\eref{Rforgmap_e}, 
evaluating on 3 conjugate pairs of lines over~$\Z_2$, and 
using Proposition~\ref{KunnethSplit_prp}, we find that each of the three segments 
in each of the triangles contributes $1\!\in\!\Z_2$ 
(the number of real lines through a non-real point) to the total count for the triangle.  
Thus, $\ve_l,\ve_r\!=\!1$
(the preimage of the loop~$\al$ in fact corresponds to the number of real lines through 2~real 
points in~$\P^2$).
This implies that the loop
 $\Ga_1^{\tau}\!-\!\Ga_{\bar2}^{\tau}\!+\!\Ga_{\bar1}^{\tau}\!-\!\Ga_2^{\tau}$,
which is the sum of the two triangular loops, is contractible.
Thus, \eref{DMrelR_e} holds over~$\Z$.
\end{rmk}

\begin{rmk}\label{DMstr_rmk}
A local model for~\eref{DMforgR_e} near the intersection point of an~$S^2$
and~$S^1$ in the same fiber is given~by
\BE{S2S1loc_e1}\R\!\times\!\C\lra\R, \qquad (t,z)\lra t|z|^2\,.\EE
Local models for~\eref{DMforgR_e} around $S^2\!\equiv\!\C\!\sqcup\!\{\i\}$ and 
$S^1\!\equiv\!\R\!\sqcup\!\{\i\}$ are given~by
\BE{S2S1loc_e2} S^2\!\times\!\R\lra\R, \quad (z,t)\lra \frac{2t}{1\!+\!|z|^2}\,, \qquad
S^1\!\times\!\C\lra\R, \quad (t,z)\lra \frac{2|z|^2}{t+t^{-1}}\,.\EE
The remaining singular fiber of~\eref{DMforgR_e} is obtained by blowing up a point
of another compact orientable 3-manifold~$\wch\cM_{0,3}^{\R}$.
The latter is isomorphic to the orientable ``double connect-sum" of two copies
of $S^1\!\times\!S^2$, i.e.~the manifold obtained by removing two disjoint 
three-balls from each copy of $S^1\!\times\!S^2$ and gluing the two copies together
along the common boundary so that the glued manifold is orientable.
The manifold $\wch\cM_{0,3}^{\R}$ can be obtained by contracting
the second copy of~$\R\P^2$ described in the proof of the $\tau$ case 
of Proposition~\ref{DMrelR_prp};
the loop
$\Ga_1^{\tau}\!-\!\Ga_{\bar2}^{\tau}\!+\!\Ga_{\bar1}^{\tau}\!-\!\Ga_2^{\tau}$
then arises from a contractible loop in the complement of the blowup point
in $\wch\cM_{0,3}^{\R}$ and thus is contractible in $\ov\cM_{0,3}^{\tau}$.
This implies that~\eref{DMrelR_e} holds over~$\Z$.
\end{rmk}

\section{Proof of Theorem~\ref{main2_thm}}
\label{Rmainpf_sec}

\noindent
The relation on $\R\ov\cM_{0,3}$ of Proposition~\ref{DMrelR_prp} induces
relations between counts of real maps from nodal domains into
a real symplectic manifold~$(X,\om,\phi)$; see Corollary~\ref{DMrelR_crl}.
Proposition~\ref{KunnethSplit_prp}, which is proved in Section~\ref{orient_sec},
expresses these counts in terms of real GW-invariants and 
a decorated version of complex GW-invariants
via the Kunneth splitting of the diagonal~$\De_X$ in~$X^2$.
Proposition~\ref{twistGW_prp}, which is also proved in Section~\ref{orient_sec},
relates the decorated invariants to the usual complex GW-invariants.
We conclude this section by deducing Theorem~\ref{main2_thm} 
from Corollary~\ref{DMrelR_crl} and 
Propositions~\ref{KunnethSplit_prp} and~\ref{twistGW_prp}.\\

\noindent
Let $(X,\om,\phi)$ be a compact real symplectic $2n$-manifold, 
$\be\!\in\!H_2(X)_{\phi}$, and $k\!\in\!\Z$ with $k\!\ge\!2$.
Let
\BE{Rforgmap_e}
f_{012}^c\!: \ov\fM_{0,k+1}(X,\be)^{\phi,c}\lra \ov\cM_{0,3}^c,~~~c=\tau,\eta,\qquad
f_{012}^{\R}\!: \ov\fM_{0,k+1}(X,\be)^{\phi}\lra \ov\cM_{0,3}^{\R},\EE
be the forgetful morphisms keeping the first three conjugate pairs of marked points only
(i.e.~those indexed by~0,1,2).
If $c\!=\!\tau,\eta$ and \tauetacond{c} and \eref{bndcond_e} 
in Section~\ref{mainthm_sec} are satisfied, we~set
$$\R f_{012}=f_{012}^c, \qquad 
\R\ov\fM_{k+1}(\be)=\ov\fM_{0,k+1}(X,\be)^{\phi,c}, 
\qquad \R\ov\cM_{0,3}=\ov\cM_{0,3}^c\,.$$
If \tauetacond{\tau} and \tauetacond{\eta} are satisfied, but not~\eref{bndcond_e},
we~set
$$\R f_{012}=f_{012}^{\R}, \qquad 
\R\ov\fM_{k+1}(\be)=\ov\fM_{0,k+1}(X,\be)^{\phi}, 
\qquad \R\ov\cM_{0,3}=\ov\cM_{0,3}^{\R}\,.$$
In all cases, we index the conjugate pairs of marked points of elements
of $\R\ov\fM_{k+1}(\be)$ by the set $\{0,1,\ldots,k\}$.
For any  relative cycle~$\Ga$ in $(\R\ov\cM_{0,3},\prt\R\ov\cM_{0,3})$
and $\mu_0,\ldots,\mu_k\!\in\!H^*(X)$, we define
\BE{RtiNdfn0_e}
\lr{\mu_0,\ldots,\mu_k}_{\be}^{\Ga}=
\int_{[\R\ov\fM_{k+1}(\be)]^{\vir}}\R f_{012}^*\PD([\Ga])\,
\ev_0^*\mu_0\,\ldots\,\ev_k^*\mu_k.\EE
This number counts degree~$\be$ real morphisms into $(X,\phi)$
from domains that stabilize to elements of~$\Ga$ after dropping
the conjugate pairs labeled by the set $\{3,\ldots,k\}$.
From Proposition~\ref{DMrelR_prp}, we immediately obtain the following corollary.

\begin{crl}[of Proposition~\ref{DMrelR_prp}]\label{DMrelR_crl}
Let $(X,\om,\phi)$ be a compact real symplectic manifold,
\hbox{$\be\!\in\!H_2(X)_{\phi}$}, and $\mu_0,\ldots,\mu_k\!\in\!H^*(X)$ 
for some $k\!\ge\!2$.
If $c\!=\!\tau,\eta$ and the conditions \tauetacond{c}
and~\eref{bndcond_e} in Section~\ref{mainthm_sec} are satisfied, then
\BE{DMrelRcrl_e}\lr{\mu_0,\ldots,\mu_k}_{\be}^{\Ga_1^c}+
\lr{\mu_0,\ldots,\mu_k}_{\be}^{\Ga_{\bar1}^c}
=\lr{\mu_0,\ldots,\mu_k}_{\be}^{\Ga_2^c}+
\lr{\mu_0,\ldots,\mu_k}_{\be}^{\Ga_{\bar2}^c}\,,\EE
where $\Ga_1^c,\Ga_{\bar1}^c,\Ga_2^c,\Ga_{\bar2}^c$ are the relative cycles in 
$(\R\ov\cM_{0,3}^c,\prt\R\ov\cM_{0,3}^c)$ defined in Section~\ref{DMrelR_sec}
and represented by the diagrams in Figures~\ref{fig_eta} and~\ref{fig_tau}.
If the conditions~\tauetacond{\tau} and \tauetacond{\eta} are satisfied,
but not necessarily~\eref{bndcond_e}, then~\eref{DMrelRcrl_e} holds with $c\!=\!\R$. 
\end{crl}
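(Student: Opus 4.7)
The plan is to deduce the corollary as a formal consequence of Proposition~\ref{DMrelR_prp}. The count~\eqref{RtiNdfn0_e} is by construction linear in $\PD([\Ga])$, hence in the relative cohomology class represented by any cocycle for $\Ga$; by Poincar\'e--Lefschetz duality it is therefore linear in the relative homology class $[\Ga]\in H_1(\R\ov\cM_{0,3},\prt\R\ov\cM_{0,3};\Q)$. In other words, for each fixed $\be,k,\mu_0,\ldots,\mu_k$ the assignment $[\Ga]\longmapsto \lr{\mu_0,\ldots,\mu_k}_{\be}^{\Ga}$ is a $\Q$-linear functional on that relative homology group. Applying this functional to the identity $[\Ga_1^c]+[\Ga_{\bar1}^c]=[\Ga_2^c]+[\Ga_{\bar2}^c]$ furnished by Proposition~\ref{DMrelR_prp} then yields~\eqref{DMrelRcrl_e} directly.

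The only substantive point to be checked is that the pairing in~\eqref{RtiNdfn0_e} descends to a well-defined linear functional on relative homology. Under the stated hypotheses, the virtual fundamental cycle $[\R\ov\fM_{k+1}(\be)]^{\vir}$ has no boundary: when $c=\tau,\eta$ and~\eqref{bndcond_e} holds, this is guaranteed by the discussion following~\tauetacond{c} in Section~\ref{mainthm_sec}, while in the glued case it is \cite[Theorem~1.6]{Teh}. Moreover, the forgetful morphism $\R f_{012}$ tautologically carries the preimage of $\prt\R\ov\cM_{0,3}$ into $\prt\R\ov\cM_{0,3}$, since the property of having no invariant component is preserved under adding conjugate pairs of marked points. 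Together with the fact that the evaluation classes $\ev_i^*\mu_i$ are pulled back from the absolute cohomology of~$X$, these two observations imply that~\eqref{RtiNdfn0_e} depends only on the class~$[\Ga]$ and is linear in it.

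I do not anticipate any nontrivial obstacle in this step. The corollary is essentially a functoriality statement: once the homology relation of Proposition~\ref{DMrelR_prp} is established, its translation into a relation among the real Gromov--Witten type counts is automatic via pullback of cocycles along $\R f_{012}$ and pairing with a boundary-free virtual fundamental class, all within the standard virtual intersection framework of \cite{FO,LT}. The real work lies in Proposition~\ref{DMrelR_prp} itself; the corollary is then a one-line deduction.
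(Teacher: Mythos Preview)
Your proposal is correct and matches the paper's approach exactly: the paper treats this corollary as immediate from Proposition~\ref{DMrelR_prp}, writing only ``From Proposition~\ref{DMrelR_prp}, we immediately obtain the following corollary,'' with no further argument. Your write-up supplies the routine justification (Poincar\'e--Lefschetz duality turns the relative cycle~$\Ga$ into an absolute cohomology class on~$\R\ov\cM_{0,3}$, which pulls back under~$\R f_{012}$ and pairs with the boundary-free virtual class), which is precisely the one-line deduction the paper has in mind.
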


\noindent
We next express the numbers appearing in Corollary~\ref{DMrelR_crl} in terms 
of complex and real GW-invariants.
Let $(X,\om,\phi)$ be a compact real symplectic $2n$-manifold,
$\be\!\in\!H_2(X)_{\phi}$, and $k\!\in\!\Z^{\ge0}$.
We denote~by 
$$\ov\fM_{k+1}(\be)=\ov\fM_{0,k+1}(X,\be)$$ 
the moduli space of stable genus~0 degree~$\be$ maps with marked points indexed
by the~set $\{0,1,\ldots,k\}$. 
For any $I\!\subset\!\{1,\ldots,k\}$, let $\ov\fM_{k+1}^I(\be)$ be
the space $\ov\fM_{k+1}(\be)$ with the reverse orientation if~$|I|$ is odd
and~let
$$\ev^I\!:\ov\fM_{k+1}^I(\be)\lra X^{k+1}$$
be the modification of the total evaluation map
\BE{evdfn_e}\ev\equiv \ev_0\!\times\!\ev_1\!\times\!\ldots\!\times\!\ev_k\!: 
\ov\fM_{k+1}(\be)\lra X^{k+1}\EE
obtained by replacing~$\ev_i$ with $\phi\!\circ\!\ev_i$ whenever $i\!\in\!I$.
For any $\mu_0,\ldots,\mu_k\!\in\!H^*(X)$, define
$$\lr{\mu_0,\ldots,\mu_k}_{\be}^I
=\int_{[\ov\fM_{k+1}^I(\be)]^{\vir}}\ev^{I*}(\mu_0\!\times\!\ldots\!\times\!\mu_k).$$
This setup is motivated by the introduction of sign decorations for
disk maps in~\cite{Ge2}.
The next two propositions are established in Section~\ref{orient_sec}.
As before, if $\mu_1,\ldots,\mu_k\!\in\!H^*(X)$ and $I\!\subset\!\{1,\ldots,k\}$,
let $\mu_I$ denote a tuple with the entries~$\mu_i$ with $i\!\in\!I$,
in some order.

\begin{prp}\label{KunnethSplit_prp}
Let $(X,\om,\phi)$ be a compact real symplectic manifold,
\hbox{$\be\!\in\!H_2(X)_{\phi}$},
$\Ga_1^c,\Ga_{\bar1}^c,\Ga_2^c,\Ga_{\bar2}^c$ be the relative cycles in 
$(\R\ov\cM_{0,3},\prt\R\ov\cM_{0,3})$ defined in Section~\ref{DMrelR_sec}
and represented by the diagrams in Figure~\ref{RM_fig},
and $\{\ga_i\}_{i\le\ell}$ and $\{\ga^i\}_{i\le\ell}$ be dual bases
for~$H^*(X)$.
If $c\!=\!\tau,\eta$ and the conditions \tauetacond{c}
and~\eref{bndcond_e} in Section~\ref{mainthm_sec} are satisfied, then
\BE{KunnethSplit_e}\begin{split}
\lr{\mu_0,\ldots,\mu_k}_{\be}^{\Ga_j^c}
=&\lr{\mu_0\mu_j,\mu_{3-j},\mu_3,\ldots,\mu_k}_{\be}^{\phi,c}\\
&+\sum_{\begin{subarray}{c}\fd(\be_1)+\be_2=\be\\ \be_1,\be_2\in H_2(X)-\{0\} \end{subarray}}
\!\!\!
\sum_{\begin{subarray}{c} I^+\sqcup J\sqcup I^-=\{1,\ldots,k\}\\ 
j\in I^+,\,3-j\in J\end{subarray}}
\!\!\!
\sum_{\begin{subarray}{c}1\le i\le\ell\\ \ga^i\in H^{2*}(X)^{\phi}_-\end{subarray}}
\hspace{-.25in}
\blr{\mu_0,\mu_{I^+\sqcup I^-},\ga_i}_{\be_1}^{I^-}
\!\blr{\mu_J,\ga^i}_{\be_2}^{\phi,c}\end{split}\EE
for all $j\!=\!1,2$, $k\!\ge\!2$, and $\mu_0,\ldots,\mu_k\!\in\!H^{2*}(X)$. 
The same identity also holds with \hbox{$(\Ga_j,\mu_0\mu_j,j\!\in\!I^+)$} replaced by 
$(\Ga_{\bar{j}},-\mu_0\phi^*\mu_j,j\!\in\!I^-)$.  
If the conditions~\tauetacond{\tau} and \tauetacond{\eta} are satisfied,
but not necessarily~\eref{bndcond_e}, then the four identities hold 
with $\Ga_*^c\!=\!\Ga_*^{\R}$ and $\lr{\ldots}^{\phi,c}\!=\!\lr{\ldots}^{\phi}$.
\end{prp}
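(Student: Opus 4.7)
The plan is to analyze the preimage $\R f_{012}^{-1}(\Ga_j^c) \subset \R\ov\fM_{k+1}(\be)$ stratum by stratum, identify each stratum as a fiber product over~$X$ of complex and real moduli spaces of curves with fewer marked points, and translate the fiber-product intersection into the Kunneth splitting of the diagonal in~$X^2$. A stable real map lies in $\R f_{012}^{-1}(\Ga_j^c)$ iff, after forgetting the conjugate pairs indexed by $3,\ldots,k$ and stabilizing, its domain acquires the three-component nodal structure of $\Ga_j^c$ from Section~\ref{DMrelR_sec}: a component $\cC^\R$ preserved by~$c$ carrying $z_{3-j}$ together with a conjugate pair of nodes, and a pair of conjugate bubbles, with $\cC^\C$ containing $z_0$ and $z_j$. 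In the \emph{bubble-collapsed} stratum the bubble is unstable ($\be_1\!=\!0$, no extra marked points), so stabilization contracts it and forces $\ev_0\!=\!\ev_j$; this produces the term $\lr{\mu_0\mu_j,\mu_{3-j},\mu_3,\ldots,\mu_k}_{\be}^{\phi,c}$. In the \emph{honest nodal} strata the degrees split as $\fd(\be_1)+\be_2=\be$ with $\be_1,\be_2\neq 0$, and each of the remaining marked points other than $z_j,z_{3-j}$ lies on $\cC^\C$, on its conjugate $c(\cC^\C)$, or on $\cC^\R$, producing the partition $I^+\sqcup I^-\sqcup J=\{1,\ldots,k\}$ with $j\in I^+$ and $3-j\in J$ appearing in~\eref{KunnethSplit_e}.

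Next I would identify each honest nodal stratum with a fiber product. Quotienting by the reparametrization~$c$ identifies $c(\cC^\C)$ with $\cC^\C$, so the bubble contributes a complex moduli space carrying the marked points $\{0\}\cup I^+\cup I^-$ and the node~$z^\C$; the evaluations at points of $I^-$ acquire the involution~$\phi$ because these points really live on $c(\cC^\C)$, which is precisely what the superscript~$I^-$ and the modified evaluation map $\ev^{I^-}$ in the definition of $\lr{\ldots}^{I^-}$ encode. The preserved component contributes a real moduli space carrying $\{3-j\}\cup J$ and the conjugate node pair. The conjugate node pair imposes one complex matching condition in~$X$, which I would replace by inserting the Kunneth decomposition $\PD_{X^2}(\De_X)=\sum_i \ga_i\!\times\!\ga^i$; by Theorem~\ref{evenvan_thm} applied to the real factor, only basis elements $\ga^i\in H^{2*}(X)^\phi_-$ can contribute. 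Summing the stratum contributions yields~\eref{KunnethSplit_e} for~$\Ga_j^c$.

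The version for $\Ga_{\bar j}^c$ with the sign $-\mu_0\phi^*\mu_j$ and $j\in I^-$ is obtained by composing the forgetful morphism with the automorphism of $\R\ov\fM_{k+1}(\be)$ swapping $z_j\leftrightarrow z_{\bar j}$: by Lemma~\ref{DMgluing_lmm}(1) this restricts to an orientation-reversing isomorphism $\Ga_j^c\to\Ga_{\bar j}^c$ (producing the overall sign), converts $\ev_j^*\mu_j$ into $\ev_j^*(\phi^*\mu_j)$, and interchanges the roles of $I^+$ and $I^-$ on the bubble side. The glued case $c\!=\!\R$ reduces to $c\!=\!\tau,\eta$ via $\Ga_j^{\R}\!=\!\Ga_j^{\tau}\!\cup\!\Ga_j^{\eta}$ and the definition~\eref{gluedsp_e2}.

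The main obstacle is the orientation bookkeeping. The orientation on $\R\ov\fM_{k+1}(\be)$ is defined via the pinching constructions of~\cite{Ge2,GZ2,Teh} from a fixed spin structure on $TX^{\phi}\!\oplus\!2E^{\ti\phi}$ or spin sub-structure on $(TX,\tnd\phi)$, while the honest nodal strata naturally inherit product orientations from the complex and real factor moduli spaces; reconciling these requires a delicate comparison that tracks how the conjugate pair of nodes and the distribution of marked points between $\cC^\C$ and $c(\cC^\C)$ interact with the spin data on either side of the node. This comparison is precisely the content of Section~\ref{orient_sec}: the sign-decorated invariants $\lr{\ldots}^{I^-}$, defined with $\ov\fM_{k+1}^{I^-}(\be_1)$ carrying the reversed orientation when $|I^-|$ is odd, are engineered so that the discrepancy between the pinching-induced and product orientations is absorbed into the decoration, making~\eref{KunnethSplit_e} hold with the orientations specified in Section~\ref{mainthm_sec}.
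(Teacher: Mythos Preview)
Your approach matches the paper's: decompose the constrained preimage of $\Ga_j$ into the nodal strata $\cN_{\be_1,\be_2;I^+,J,I^-}(\bh)$, compute each stratum's signed cardinality via the Kunneth splitting of $\De_X$, and verify that every point contributes $+1$ to the number~\eref{RtiNdfn0_e}. Two points of comparison are worth noting.

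First, a small correction on the $\be_1\!=\!0$ term: the bubble carrying $z_0$, $z_j$, and the node has three special points and is \emph{stable}, so nothing is contracted. The first term on the right of~\eref{KunnethSplit_e} arises instead because the degree-$0$ three-point invariant $\lr{\mu_0,\mu_j,\ga_i}_0^X=\int_X\mu_0\mu_j\ga_i$ in the Kunneth sum selects the basis element $\ga^i$ dual to $\mu_0\mu_j$; the cases $\be_1\!=\!0$ with $|I^+\!\sqcup\!I^-|\!\ge\!2$ are empty by a dimension count.

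Second, the paper does not derive the $\Ga_{\bar j}$ identity from the $\Ga_j$ one via the conjugation automorphism. It treats all four cycles uniformly: the precise orientation statement you defer to Section~\ref{orient_sec} is Lemma~\ref{Rgluing0_lmm}, which says the gluing map~\eref{RPhidfn_e} is orientation-preserving with respect to the complex orientation on the normal line bundle $L$ and the product orientation on the stratum; combined with the identification of $L$ with $\R f_{012}^*L_\Ga$ and Lemma~\ref{DMgluing_lmm}(3), this shows each stratum contributes with multiplicity $+1$. Your symmetry route for $\Ga_{\bar j}$ also works, but you would need to combine Lemma~\ref{MapsOrient_lmm}(1) (the automorphism reverses orientation on the moduli of maps) with Lemmas~\ref{DMorient_lmm}(1) and~\ref{DMgluing_lmm}(1) (it reverses orientation on $\R\ov\cM_{0,3}$ and on $\Ga_j\!\to\!\Ga_{\bar j}$) to see that the net effect on $\PD([\Ga])$ is the claimed sign.
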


\begin{prp}\label{twistGW_prp}
Let $(X,\om,\phi)$ be a compact real symplectic $2n$-manifold,
$\be\!\in\!H_2(X)_{\phi}$, $k\!\in\!\Z^{\ge0}$, and $I\!\subset\!\{0,1,\ldots,k\}$.
For all $\mu_0,\ldots,\mu_k\!\in\!H^*(X)_-^{\phi}\!\cup\!H^*(X)_+^{\phi}$,
$$\lr{\mu_0,\ldots,\mu_k}_{\be}^I=(-1)^{\ve_I(\mu)}\lr{\mu_0,\ldots,\mu_k}_{\be}^X\,,$$
where $\ve_I(\mu)\!=\!|\{i\!\in\!I\!:\,\mu_i\!\in\!H^*(X)_+^{\phi}\}|$.
\end{prp}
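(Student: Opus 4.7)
The plan is to reduce the proposition to a direct bookkeeping of two independent signs coming from the two ingredients in the definition of $\lr{\mu_0,\ldots,\mu_k}_{\be}^I$: the twisted evaluation map $\ev^I$ and the orientation-reversal convention on $\ov\fM_{k+1}^I(\be)$. The key simplification is that each $\mu_i$ lies in $H^*(X)^{\phi}_+\cup H^*(X)^{\phi}_-$, so it is an eigenvector of $\phi^*$ with eigenvalue $\eps_i\!=\!\pm1$. This turns what might look like a cohomological statement into pure sign arithmetic, performed before integrating.

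First I would expand the integrand: for $i\!\in\!I$ the factor $(\ev_i^I)^*\mu_i=\ev_i^*(\phi^*\mu_i)=\eps_i\,\ev_i^*\mu_i$, while for $i\!\notin\!I$ it is unchanged. Pulling all the constants out gives
\[
\ev^{I*}(\mu_0\!\times\!\cdots\!\times\!\mu_k)
=\Big(\prod_{i\in I}\eps_i\Big)\,\ev^*(\mu_0\!\times\!\cdots\!\times\!\mu_k)
=(-1)^{\ve_I^-(\mu)}\,\ev^*(\mu_0\!\times\!\cdots\!\times\!\mu_k),
\]
where $\ve_I^-(\mu)\!=\!|\{i\!\in\!I\!:\mu_i\!\in\!H^*(X)^{\phi}_-\}|$. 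Second, by the definition of $\ov\fM_{k+1}^I(\be)$, integrating over $[\ov\fM_{k+1}^I(\be)]^{\vir}$ differs from integrating over $[\ov\fM_{k+1}(\be)]^{\vir}$ by the overall factor $(-1)^{|I|}$. Since the virtual class construction is compatible with reversing the orientation, this step is immediate and does not rely on any of the analytic structure of the moduli space.

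Combining the two contributions I would obtain
\[
\lr{\mu_0,\ldots,\mu_k}_{\be}^I=(-1)^{|I|+\ve_I^-(\mu)}\lr{\mu_0,\ldots,\mu_k}_{\be}^X.
\]
The hypothesis that each $\mu_i$ is a $\phi^*$-eigenvector implies $|I|=\ve_I(\mu)+\ve_I^-(\mu)$, so $|I|+\ve_I^-(\mu)\equiv \ve_I(\mu)\pmod2$, which yields the claimed formula.

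There is essentially no obstacle beyond making the sign conventions explicit; the statement is a bookkeeping lemma whose point is that the orientation rule $(-1)^{|I|}$ in the definition of $\ov\fM_{k+1}^I(\be)$ is tailored so that $H^*(X)^{\phi}_-$ insertions contribute no sign while $H^*(X)^{\phi}_+$ insertions do. The only place to be careful is the extension of the orientation convention to subsets $I\!\subset\!\{0,1,\ldots,k\}$ (rather than $I\!\subset\!\{1,\ldots,k\}$ as originally stated), which should be done by the same rule and preserves the calculation verbatim.
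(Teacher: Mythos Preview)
Your proof is correct and takes a more direct route than the paper.  You work purely cohomologically: since each $\mu_i$ is a $\phi^*$-eigenvector, the identity $(\phi\!\circ\!\ev_i)^*\mu_i=\eps_i\,\ev_i^*\mu_i$ lets you pull the sign $(-1)^{\ve_I^-(\mu)}$ out of the integrand before integrating, and the orientation convention on $\ov\fM_{k+1}^I(\be)$ contributes the remaining $(-1)^{|I|}$.  The paper instead passes to Poincar\'e-dual cycles $h_i\!:Y_i\!\to\!X$, forms the constrained zero-dimensional spaces $\ov\fM_{\bh}(\be)$ and $\ov\fM_{\bh}^I(\be)$, observes these coincide as sets, and compares the signs of each point by factoring the relevant isomorphism through $\tnd(\Th^I\!\times\!\Th^I)$ on the normal bundle of the diagonal in $X^{k+1}\!\times\!X^{k+1}$.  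Three separate sign contributions appear there---$(-1)^{n|I|}$ from $\Th^I$ on the normal bundle, $(-1)^{|I|}$ from the moduli orientation, and $(-1)^{n|I|+\ve_I^-(\mu)}$ from reorienting the cycle factors $\Y^I$---and they combine to the same $(-1)^{\ve_I(\mu)}$.

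Your argument is shorter and requires no geometric representatives.  The paper's approach, however, is not gratuitous: the cycle-level setup (the spaces $\ov\fM_{\bh}^I(\be)$, the maps $\lr\bh^I$, and the pointwise sign comparison) is reused immediately afterward in the proof of Theorem~\ref{evenvan_thm}, where one must further identify $\R\ov\fM_{\bh}^I(\be)$ with $\R\ov\fM_{\bh'}(\be)$ via the automorphism interchanging a conjugate pair of marked points.  That step genuinely lives at the level of cycles and moduli spaces, so the paper front-loads the machinery here.
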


\begin{proof}[{\bf\emph{Proof of Theorem~\ref{main2_thm}}}] 
We apply Corollary~\ref{DMrelR_crl} with $\mu_0\!\equiv\!\mu,\mu_1,\ldots,\mu_k$
as in the statement of Theorem~\ref{main2_thm}.
Since $\mu_i\!\in\!H^*(X)_-^{\phi}$ for all $i\!=\!1,\ldots,k$,
$$\blr{\mu_0,\mu_{I^+\sqcup I^-},\ga_i}_{\be_1}^{I^-}
= \blr{\mu_0,\mu_{I^+\sqcup I^-},\ga_i}_{\be_1}^X$$
for all decompositions $I^+\!\sqcup\!J\!\sqcup\!I^-=\{1,\ldots,k\}$
and for all four terms in~\eref{DMrelRcrl_e}; see Proposition~\ref{twistGW_prp}.
If $c\!=\!\tau,\eta$ and the conditions \tauetacond{c} and~\eref{bndcond_e} are satisfied,
Proposition~\ref{KunnethSplit_prp} thus reduces the left-hand side of~\eref{DMrelRcrl_e}
to 
$$2\Bigg(\!\!\lr{\mu_0\mu_1,\mu_2,\mu_3,\ldots,\mu_k}_{\be}^{\phi,c}\\
+\!\!\!
\sum_{\begin{subarray}{c}\fd(\be_1)+\be_2=\be\\ \be_1,\be_2\in H_2(X)-\{0\} \end{subarray}}
\!\!\!
\sum_{I\sqcup J=\{3,\ldots,k\}}
\!\!\!
\sum_{\begin{subarray}{c}1\le i\le\ell\\ \ga^i\in H^{2*}(X)^{\phi}_-\end{subarray}}
\hspace{-.24in} 2^{|I|}
\blr{\mu_0,\mu_1,\mu_I,\ga_i}_{\be_1}^X \!\blr{\mu_2,\mu_J,\ga^i}_{\be_2}^{\phi,c}\Bigg)$$
and the right-hand side of~\eref{DMrelRcrl_e} to 
$$2\Bigg(\!\!\lr{\mu_1,\mu_0\mu_2,\mu_3,\ldots,\mu_k}_{\be}^{\phi,c}\\
+\!\!\!
\sum_{\begin{subarray}{c}\fd(\be_1)+\be_2=\be\\ \be_1,\be_2\in H_2(X)-\{0\} \end{subarray}}
\!\!\!
\sum_{I\sqcup J=\{3,\ldots,k\}}
\!\!\!
\sum_{\begin{subarray}{c}1\le i\le\ell\\ \ga^i\in H^{2*}(X)^{\phi}_-\end{subarray}}
\hspace{-.24in} 2^{|I|}
\blr{\mu_0,\mu_2,\mu_I,\ga_i}_{\be_1}^X \!\blr{\mu_1,\mu_J,\ga^i}_{\be_2}^{\phi,c}\Bigg)\,.$$
Setting the two expressions equal, we obtain the formula in Theorem~\ref{main2_thm}.
If the conditions~\tauetacond{\tau} and \tauetacond{\eta} are satisfied,
but not necessarily~\eref{bndcond_e}, the same argument applies with 
$\lr{\ldots}^{\phi,c}$ replaced by~$\lr{\ldots}^{\phi}$.
\end{proof}

\section{Orientations and signs}
\label{orient_sec}

\noindent
In this section, we analyze and compare orientations of various moduli spaces
of complex and real maps.
We use these comparisons to establish  
Proposition~\ref{twistGW_prp}, Theorem~\ref{evenvan_thm},
and Proposition~\ref{KunnethSplit_prp}.

\begin{proof}[{\bf\emph{Proof of Proposition~\ref{twistGW_prp}}}]
For each cycle $h:Y\!\lra\!X$ representing the Poincare dual of an element
of~$H^*(X)_{\pm}^{\phi}$, let $\ve(h)\!=\!\pm1$, respectively. 
Define an involution $\Th^I\!:X^{k+1}\!\lra\!X^{k+1}$ by  
$$\Th^I\big(x_0,\ldots,x_k\big)\lra \big(\Th^I_0(x_0),\ldots,\Th^I_k(x_k)\big),
\qquad\hbox{where}\qquad
\Th^I_i(x)=\begin{cases}x,&\hbox{if}~i\!\not\in\!I;\\
\phi(x),&\hbox{if}~i\!\in\!I.
\end{cases}$$
We can assume that the cohomology degrees of $\mu_0,\mu_1,\ldots,\mu_k$ satisfy
$$\deg\mu_0+\ldots+\deg\mu_k =\dim^{\vir}\ov\fM_{k+1}(\be)
=2\big(\blr{c_1(X),\be}+n\!-\!2+k\big)\,, $$
where $2n\!=\!\dim X$.
Choose a generic collection of representatives \hbox{$h_i\!:Y_i\!\lra\!X$} 
for the Poincare duals of $\mu_0,\ldots,\mu_k$, respectively.
The Poincare dual of~$\phi^*\mu_i$ is then represented by the~cycle
\BE{conjmu_e0}\ov{h_i}\equiv \phi\!\circ\!h_i\!: \bar{Y}_i\equiv (-1)^nY_i\lra X,\EE
with $-Y_i$ denoting $Y_i$ with the opposite orientation.
Let
$$\lr\bh=h_0\!\times\!\ldots\!\times\!h_k\!: 
\Y\equiv Y_0\!\times\!\ldots\!\times\!Y_k\lra X^{k+1}.$$
We denote by $\Y^I$ the modification of~$\Y$
with the $i$-th factor replaced by $\ve(h_i)\bar{Y}_i$ and~by
$$\lr\bh^I\!:\Y^I\lra X^{k+1}$$
the modification of~$\lr\bh$ with the $i$-th factor map replaced by~$\ov{h_i}$
whenever $i\!\in\!I$. Thus, 
$\lr\bh^I\!=\!\Th^I\!\circ\!\lr\bh$.\\

\noindent
We set 
$$\ov\fM_{\bh}^I(\be)=
\big\{(u,\y)\!\in\!\ov\fM_{k+1}^I(\be)\!\times\!\Y^I\!:\,\ev^I(u)\!=\!\lr\bh^I(\y)\big\}\,,
\qquad
\ov\fM_{\bh}(\be)\equiv\ov\fM_{\bh}^{\eset}(\be)\,.$$
As sets, these two objects are the same.
For a generic tuple~$\bh$, the restriction of the total evaluation map~\eref{evdfn_e} 
to every stratum of $\ov\fM_{k+1}(\be)$ is transverse to $\lr\bh$ in~$X^{k+1}$
and thus $\ov\fM_{\bh}(\be)$ is a finite collection
of signed weighted points contained in the main stratum of the moduli space.
Since $\bh$ and $\bh^I$ represent the Poincare duals of 
$\mu_0\!\times\!\ldots\!\times\!\mu_k$, the signed weighted cardinalities of 
$\ov\fM_{\bh}(\be)$ and $\ov\fM_{\bh}^I(\be)$ are the numbers
$\lr{\mu_0,\ldots,\mu_k}_{\be}^X$ and $\lr{\mu_0,\ldots,\mu_k}_{\be}^I$, respectively.\\

\noindent
The sign of each element~$(u,\y)$ of $\ov\fM_{\bh}^I(\be)$ is determined by
the orientations of $\ov\fM_{k+1}^I(\be)$, $\Y^I$, and~$X^{k+1}$
via the maps~$\ev^I$ and~$\lr\bh^I$.
It is the sign of the isomorphism
\BE{normbndl_e0} \tnd\big\{\ev^I\!\times\!\lr\bh^I\big\}\!: 
T(\ov\fM_{k+1}^I(\be)\!\times\!\Y^I)|_{(u,\y)}
\lra 
\frac{T(X^{k+1}\!\times\!X^{k+1})|_{\De_{X^{k+1}}}}{T(\De_{X^{k+1}})}
\bigg|_{(\ev^I(u),\lr\bh^I(\y))} \,,\EE
where $\De_{X^{k+1}}\!\subset\!X^{k+1}\!\times\!X^{k+1}$ is the diagonal.
By the chain rule,
$$\tnd\big\{\ev^I\!\times\!\lr\bh^I\big\}  =  
\tnd\big\{\Th^I\!\times\!\Th^I\big\}\circ
\tnd\big\{\ev\!\times\!\lr\bh\big\}\,.$$
The sign of the isomorphism
$$\tnd\big\{\Th^I\!\times\!\Th^I\big\}\!:
\frac{T(X^{k+1}\!\times\!X^{k+1})|_{\De_{X^{k+1}}}}{T(\De_{X^{k+1}})}
\bigg|_{(\ev(u),\lr\bh(\y))}
\lra
\frac{T(X^{k+1}\!\times\!X^{k+1})|_{\De_{X^{k+1}}}}{T(\De_{X^{k+1}})}
\bigg|_{(\ev^I(u),\lr\bh^I(\y))}$$
is $(-1)^{n|I|}$.
The orientations of $\ov\fM_{k+1}^I(\be)$ and $\ov\fM_{k+1}(\be)$
differ by~$(-1)^{|I|}$,
while the orientations of~$\Y^I$ and~$\Y$ differ by
$$(-1)^{n|I|+|\{i\in I:\,\mu_i\in H^*(X)^{\phi}_-\}|}\,.$$
Thus, the signed weighted cardinalities of 
$\ov\fM_{\bh}^I(\be)$ and $\ov\fM_{\bh}(\be)$ differ by the sign $(-1)^{\ve_I(\mu)}$.
\end{proof}

\noindent
We next recall how the main stratum $\fM_{k+1}^{\phi,c}(\be)$ of the moduli space
$\ov\fM_{k+1}(X,\be)^{\phi,c}$ with $c\!=\!\tau,\eta$ is oriented 
if the condition~\tauetacond{c} in Section~\ref{mainthm_sec} is satisfied.
We begin with the case $k\!=\!-1$.
By definition,
$$\fM_0^{\phi,c}(\be)= \cP_0^{\phi,c}(\be)/G_c\,, \qquad g\cdot u=u\circ g,$$
where $\cP_0^{\phi,c}(\be)$ is the space of parametrized $(\phi,c)$-real 
degree~$\be$ $J$-holomorphic maps $\P^1\!\lra\!X$
 and $G_c\!\subset\!\PSL_2\C$ is the subgroup of automorphisms of~$\P^1$
commuting with~$c$.
The latter is oriented by the short exact sequence 
$$0\lra T_{\id}S^1\lra T_{\id}G_c\lra\C\lra 0,$$
where $\C\!=\!T_0\C$ corresponds to shifting the origin and
$S^1\!\subset\!G_c$ is the subgroup of standard rotations of~$\C$,
which we identify with $S^1\!\subset\!\C^*$.
The (virtual) tangent space of~$\cP_0^{\phi,c}(\be)$ at a point $u\!\in\!\cP_0^{\phi,c}(\be)$  
is the index of the linearization~$D_u^c$ of the $\bar\prt$-operator at~$u$.
If $c\!=\!\tau$, we orient this index as
in the proofs of \cite[Corollary~1.8]{Ge} and \cite[Lemma~7.3]{Ge2} from 
a fixed spin structure  on~$TX^{\phi}\!\oplus\!2E^{\ti\phi}$,
with $E$ as in~\tauetacond{\tau}.
If $c\!=\!\eta$, we orient the index via the pinching construction of 
\cite[Lemma~2.5]{Teh} from a fixed spin sub-structure on~$(TX,\tnd\phi)$;
see \cite[Corollary~5.10]{GZ2}.
The orientation of~$\fM_0^{\phi,c}(\be)$ at~$[u]$ is then specified~by
$$ \ind\, D_u^c \approx T_{[u]}\fM_0^{\phi,c}(\be) \oplus T_{\id}G_c\,.$$
The order of the factors on the right-hand side above is motivated
by the choice of the orientation on~$\ov\cM_{0,2}^c$ in Section~\ref{DMrelR_sec}.
For $k\!\ge\!0$, $\ov\fM_{k+1}(X,\be)^{\phi,c}$ is oriented using the first element in 
each conjugate pair~$(z_i,z_{\bar i})$ to orient the general
fibers of the forgetful morphism 
\BE{fMforg_e}\ov\fM_{k+1}(X,\be)^{\phi,c}\lra \ov\fM_0(X,\be)^{\phi,c}\EE
obtained by forgetting the $k$ pairs of conjugate marked points.\\

\noindent
In this paper, we use a different natural construction of orientation on
$\ov\fM_{k+1}(X,\be)^{\phi,c}$ in the stable range, i.e.~$k\!\ge\!1$;
in Lemma~\ref{MapsOrient_lmm}, we show that the two orientations coincide.
It is obtained using the forgetful morphism
$$f\!:\ov\fM_{k+1}(X,\be)^{\phi,c}\lra \ov\cM_{0,k+1}^c$$
and the orientation on $\ov\cM_{0,k+1}^c$ defined in Section~\ref{DMrelR_sec}.
For a general $[u]\!\in\!\ov\fM_{k+1}(X,\be)^{\phi,c}$ in this case,
the domain~$\Si_u$ of~$u$ with its marked points is stable and 
thus $\cC\!=\![\Si_u]$ is the image of~$[u]$ in~$\ov\cM_{0,k+1}^c$.
The (virtual) vertical tangent bundle of~$f$ at such~$[u]$ is
the index of~$D_u^c$.
The orientation of $\ov\fM_{k+1}(X,\be)^{\phi,c}$ is then specified~by
\BE{MapsOrient2_e} T_{[u]}\ov\fM_{k+1}(X,\be)^{\phi,c}\approx
\ind\,D_u^c\oplus T_{[\Si_u]}\ov\cM_{0,k+1}^c\,,\EE
with $\ind\,D_u^c$ oriented as in the previous paragraph.

\begin{lmm}\label{MapsOrient_lmm}
Let $c\!=\!\tau,\eta$, $(X,\om,\phi)$ be a compact real symplectic manifold
satisfying  the condition~\tauetacond{c} in Section~\ref{mainthm_sec},
$k\!\in\!\Z^{\ge0}$, and $\be\!\in\!H_2(X)_{\phi}$.
\begin{enumerate}[label=(\arabic*),leftmargin=*]

\item For every $i\!=\!0,1,\ldots,k$, the automorphism of $\ov\fM_{0,k+1}(\be)^{\phi,c}$
interchanging the marked points in the $i$-th conjugate pair is orientation-reversing.

\item For all $i,j\!=\!0,1,\ldots,k$, the automorphism of $\ov\fM_{0,k+1}(\be)^{\phi,c}$
interchanging the $i$-th and $j$-th conjugate pairs of marked points
is orientation-preserving.

\item If $k\!\ge\!1$, the two orientations on $\ov\fM_{0,k+1}(\be)^{\phi,c}$
described above are the same.

\end{enumerate}
If the conditions~\tauetacond{\tau} and~\tauetacond{\eta} are satisfied, 
the three statements also apply with $\ov\fM_{0,k+1}(\be)^{\phi,c}$ replaced
by $\ov\fM_{0,k+1}(\be)^{\phi}$.
\end{lmm}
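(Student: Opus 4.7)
The plan is to verify (1) and (2) using the first (forgetful-to-$\ov\fM_0$) orientation, where the action of the relevant involutions on tangent spaces is transparent, and then to prove (3) by reducing the comparison of the two orientations to an identity of oriented real vector spaces associated to the moduli of marked real genus-$0$ curves. Given (3), parts (1) and (2) then hold for both orientations, and the extension to the glued moduli space $\ov\fM_{0,k+1}(X,\be)^{\phi}$ follows by treating each of the two pieces separately.

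For (1), fix $i$. The involution $\sigma_i$ interchanging $z_i$ and $c(z_i)$ covers the identity under the forgetful morphism $\ov\fM_{0,k+1}(X,\be)^{\phi,c} \to \ov\fM_0(X,\be)^{\phi,c}$, acting as the identity on every summand of the fiber decomposition $T_{z_0}\P^1 \oplus \ldots \oplus T_{z_k}\P^1$ except $T_{z_i}\P^1$, where it acts by the $\C$-antilinear map $dc|_{z_i}$. Since $dc|_{z_i}$ reverses the complex orientation of the real $2$-dimensional space $T_{z_i}\P^1$, the total action is orientation-reversing. For (2), the involution swapping the $i$-th and $j$-th conjugate pairs transposes two real $2$-dimensional summands of the same decomposition, contributing the sign $(-1)^{2\cdot 2}=+1$, hence is orientation-preserving.

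For (3), substitute $\ind D_u^c \approx T\fM_0^{\phi,c}(\be) \oplus T_{\id}G_c$ into (4.1) and move the even-dimensional factor $T\fM_0^{\phi,c}$ past $T_{z_0}\P^1 \oplus \ldots \oplus T_{z_k}\P^1$ in the first-orientation decomposition (no sign introduced, since $2(k+1)$ is even). The two orientations then agree if and only if
\[
T_{z_0}\P^1 \oplus \ldots \oplus T_{z_k}\P^1 \approx T_{\id}G_c \oplus T\ov\cM_{0,k+1}^c
\]
as oriented real vector spaces under the identification coming from the $G_c$-action on ordered tuples in~$\P^1$. For $k \ge 2$, Section~3 orients the fiber of $\ov\cM_{0,k+1}^c \to \ov\cM_{0,2}^c$ by $T_{z_2}\P^1 \oplus \ldots \oplus T_{z_k}\P^1$; after substitution, the factors $T_{z_i}\P^1$ with $i \ge 2$ appear on both sides, and their reordering past the $1$-dimensional $T\ov\cM_{0,2}^c$ produces only even signs, reducing the identity to the base case $k=1$.

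The base case $T_{z_0}\P^1 \oplus T_{z_1}\P^1 \approx T_{\id}G_c \oplus T\ov\cM_{0,2}^c$ is verified by an explicit calculation: fix a generic representative such as $(z_0,z_1)=(i,2i)$ for $c=\tau$ (and an analogue for $c=\eta$), compute the infinitesimal $G_c$-action on $T_{z_0}\P^1 \oplus T_{z_1}\P^1$ using the splitting $T_{\id}G_c \approx T_{\id}S^1 \oplus T_0\C$, and compare with the complex orientation via the cross-ratio chart on $\ov\cM_{0,2}^c$ from Section~3; the sign $(-1)^c$ in the cross-ratio accounts for the discrepancy between $c=\tau$ and $c=\eta$. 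The glued case follows because the involutions of (1) and (2) preserve the partition $\ov\fM^{\phi}=\ov\fM^{\phi,\tau}\cup\ov\fM^{\phi,\eta}$, while both orientations in (3) are defined piecewise with the same reversal convention on the $\eta$-side, so piecewise agreement implies global agreement. The main technical obstacle is the orientation bookkeeping in the base case, where the $G_c$-orientation convention (chosen differently for $\tau$ and $\eta$) and the cross-ratio sign $(-1)^c$ must conspire to yield matching orientations on the nose.
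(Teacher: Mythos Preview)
Your approach is essentially the paper's, and parts~(1), (2), and the glued case match it directly. For~(3), the paper also reduces to an orientation identity relating $\bigoplus_i T_{z_i}\Si_u$, $T_{\id}G_c$, and $T\ov\cM_{0,k+1}^c$, but its bookkeeping is a bit cleaner and avoids your deferred base-case computation: rather than substituting $\ind D_u^c\approx T\fM_0^{\phi,c}\oplus T_{\id}G_c$ into the second decomposition and canceling, the paper adds $T_{\id}G_c$ on the \emph{right} to both orientation decompositions. After the harmless swap of the even-dimensional $\bigoplus_i T_{z_i}\Si_u$ past $T_{\id}G_c$, the first orientation becomes $\ind D_u^c\oplus\bigoplus_i T_{z_i}\Si_u$, while the second becomes $\ind D_u^c\oplus T\ov\cM_{0,k+1}^c\oplus T_{\id}G_c$; the required identity is then $\bigoplus_i T_{z_i}\Si_u\approx T\ov\cM_{0,k+1}^c\oplus T_{\id}G_c$, which is exactly the quotient-space convention for $\ov\cM_{0,k+1}^c=\mathrm{Conf}^c/G_c$. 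The paper notes that the order in $\ind D_u^c\approx T\fM_0^{\phi,c}\oplus T_{\id}G_c$ was chosen precisely to match the cross-ratio orientation on $\ov\cM_{0,2}^c$, so no separate $k=1$ check is written out. Your route arrives at the identity with the factors in the order $T_{\id}G_c\oplus T\ov\cM_{0,k+1}^c$; since both factors are odd-dimensional, this differs from the paper's by a sign, so your explicit base-case computation is genuinely needed to close the argument, and you should be careful that the two ``$T\fM_0^{\phi,c}$'' summands you cancel really are identified by the underlying geometry. One small slip: $T\fM_0^{\phi,c}(\be)$ is not ``even-dimensional'' in general (its real dimension is $\lr{c_1(TX),\be}+n-3$); your swap is still sign-free, but only for the reason you give in parentheses, namely that $\bigoplus_i T_{z_i}\P^1$ has even dimension $2(k+1)$.
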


\begin{proof}
(1,2) Both automorphisms take a fiber of~\eref{fMforg_e} to the same fiber.
The restriction of the automorphism in~(1) to a fiber of~\eref{fMforg_e} is orientation-reversing,
while the restriction of the automorphism in~(2) to a fiber 
of~\eref{fMforg_e} is orientation-preserving.
This implies the first two statements of the lemma.\\

\noindent
(3) Let $u$ be an element of $\ov\fM_{0,k+1}(\be)^{\phi,c}$ at a point~$u$ 
with smooth domain~$\Si_u$ and $u_0$ be its image of $[u]$ under~\eref{fMforg_e}.
The first orientation of $\ov\fM_{0,k+1}(\be)^{\phi,c}$ described above satisfies
\begin{equation*}\begin{split}
T_{[u]}\ov\fM_{0,k+1}(\be)^{\phi,c}\oplus T_{\id}G_c
&\approx T_{[u_0]}\ov\fM_{0,k+1}(\be)^{\phi,c}
\oplus \bigoplus_{i=0}^k T_{z_i^+}\Si_u \oplus T_{\id}G_c\\
&\approx T_{[u_0]}\ov\fM_{0,k+1}(\be)^{\phi,c}
\oplus T_{\id}G_c \oplus \bigoplus_{i=0}^k T_{z_i^+}\Si_u
\approx \ind\,D_{u_0}^c\oplus  \bigoplus_{i=0}^k T_{z_i^+}\Si_u\,.
\end{split}\end{equation*}
The orientation of $\ov\cM_{0,k+1}^c$ chosen in Section~\ref{DMrelR_sec}
at a smooth curve $\cC\!=\![(z_0^+,z_0^-),\ldots,(z_k^+,z_k^-)]$ is described~by
$$T_{z_0^+}\cC\oplus\ldots\oplus T_{z_k^+}\cC\approx T_{\cC}\ov\cM_{0,k+1}^c 
\oplus T_{\id}G_c\,.$$
Thus, the second orientation of $\ov\fM_{0,k+1}(\be)^{\phi,c}$ described above satisfies
\begin{equation*}\begin{split}
T_{[u]}\ov\fM_{0,k+1}(\be)^{\phi,c}\oplus T_{\id}G_c
&\approx \ind\,D_{u_0}^c\oplus T_{[\Si_0]}\ov\cM_{0,k+1}^c\oplus T_{\id}G_c
\approx \ind\,D_{u_0}^c\oplus  \bigoplus_{i=0}^k T_{z_i^+}\Si_u\,.
\end{split}\end{equation*}
Thus, the two orientations of $T_{[u]}\ov\fM_{0,k+1}(\be)^{\phi,c}$ are the same.
\end{proof}

\noindent
If $c$ is an orientation-reversing involution on a compact orientable surface~$\Si$
of genus~$g$ and $(X,\om,\phi)$ is a compact real symplectic $2n$-manifold such that
$n$ is odd,  $X^{\phi}\!=\!\eset$, and 
$\La_{\C}^{\top}(TX,\tnd\phi)$ admits a real square root,
then the moduli spaces $\ov\fM_{g,k+1}(X,\be)^{\phi,c}$ are oriented 
via the analogue of the morphism~\eref{fMforg_e}.
Thus, the first two statements of Lemma~\ref{MapsOrient_lmm} also hold
if $\ov\fM_{0,k+1}(X,\be)^{\phi,c}$ is replaced by $\ov\fM_{g,k+1}(X,\be)^{\phi,c}$.

\begin{proof}[{\bf\emph{Proof of Theorem~\ref{evenvan_thm}}}]
We denote by $\R\ov\fM_{g,k}(\be)$ the appropriate moduli space of real morphisms,
as determined by the case of Theorem~\ref{evenvan_thm} under consideration.
We can assume that the cohomology degrees of $\mu_1,\ldots,\mu_k$ satisfy
$$\deg\mu_1+\ldots+\deg\mu_k =\dim^{\vir}\R\ov\fM_{g,k}(\be)
=\blr{c_1(X),\be}+(n\!-\!3)(1\!-\!g)+2k\,. $$
Choose $h_i\!:Y_i\!\lra\!X$ as in the proof of Proposition~\ref{twistGW_prp}
and define $\lr\bh$, $\lr\bh^I$, $\R\ov\fM_{\bh}(\be)$, and $\R\ov\fM_{\bh}^I(\be)$, 
for any subset $I\!\subset\!\{1,\ldots,k\}$, as before, but starting 
with the moduli space $\R\ov\fM_{g,k}(\be)$ in the last two cases. 
By exactly the same argument as in the proof of Proposition~\ref{twistGW_prp},
the signed weighted cardinalities of  $\R\ov\fM_{\bh}^I(\be)$ and $\R\ov\fM_{\bh}(\be)$ 
differ by the sign $(-1)^{\ve_I(\mu)}$.\\

\noindent 
If $\mu_{i^*}\!\in\!H^*(X)_+^{\phi}$, we apply the above conclusion with $I\!=\!\{i^*\}$.
The signed weighted cardinalities of $\R\ov\fM_{\bh}^I(\be)$ and $\R\ov\fM_{\bh}(\be)$ 
are then opposite.
Interchanging the points in the $i^*$-th conjugate pair induces
an orientation-preserving isomorphism from $\R\ov\fM_{\bh}^I(\be)$
to $\R\ov\fM_{\bh'}(\be)$, where $\bh'$ is the tuple obtained by 
replacing~$h_{i^*}$ with
$$\ov{h_{i^*}}\!=\!\phi\!\circ\!h_{i^*}:(-1)^n Y_{i^*}\lra X\,;$$ 
this cycle represents the Poincare dual of $\phi^*\mu_{i^*}\!=\!\mu_{i^*}$.
Thus, the signed weighted cardinalities of $\R\ov\fM_{\bh}(\be)$ and
$\R\ov\fM_{\bh'}(\be)$ are opposite.
Since both of them are equal to the real invariant 
$\lr{\mu_1,\ldots,\mu_k}_{\be}^{\phi}$ in question, 
the latter vanishes.
\end{proof}

\noindent
In the remainder of this section, we establish Proposition~\ref{KunnethSplit_prp}.
The key point in its proof is that all orientations are chosen compatibly;
in particular, the oriented normal bundle of~$\Ga_*$ in $\R\ov\cM_{0,3}$ 
and the oriented normal bundle 
of its preimage in $\R\ov\fM_{k+1}(\be)$ are given by the complex line bundle
of smoothings of the node on the bubble containing~$z_0$.
We proceed with the notation and assumptions as in the statement of 
Proposition~\ref{KunnethSplit_prp}.
We will also use the same notation for the uncompactified moduli spaces
(maps only from smooth domains) as we have introduced for the compactified moduli
spaces.\\

\noindent
For $\be\!\in\!H_2(X)_{\phi}$, denote~by $\cN_{\be}\!\subset\!\R\ov\fM_{k+1}(\be)$
the sub-orbifold of maps from domains consisting of precisely three components 
with one invariant bubble 
and two conjugate bubbles with the marked point~$z_0^+$ on one of the conjugate bubbles. 
For $u\!\in\!\cN_{\be}$, denote by~$u^{\C}$ the restriction of~$u$ to the component
containing~$z_0^+$ and by $z^{\C}$ the marked point corresponding to the node on this component; 
denote by $u^{\R}$ the restriction of~$u$ to the invariant component and by $z_{\bu}^+$ 
the marked point on this component corresponding to the same node as~$z^{\C}$.
If $\be\!=\!\fd(\be_1)\!+\!\be_2$ and $\{1,\ldots,k\}\!=\!I^+\!\sqcup\!J\!\sqcup\!I^-$,
let
$$\cN_{\be_1,\be_2;I^+,J,I^-} \subset \cN_{\be}$$
be the subspace of the maps~$u$ so that 
the degrees of~$u^{\C}$ and~$u^{\R}$ are~$\be_1$ and~$\be_2$, respectively,
and the rest of the marked points carried by the component containing~$z_0^+$
are the first elements in the pairs of conjugate points indexed by~$I^+$ 
and the second elements in the pairs indexed by~$I^-$.
If 
\BE{empcond_e}(\be_1,I^+,I^-)\!=\!(0,\eset,\eset)
\qquad\hbox{or}\qquad (\be_2,J)\!=\!(0,\eset),\EE
$\cN_{\be_1,\be_2;I^+,J,I^-}\!=\!\eset$ for stability reasons.\\

\noindent
The restrictions~$u^{\C}$ and~$u^{\R}$ determine an isomorphism
\BE{cNisom_e}\cN_{\be_1,\be_2;I^+,J,I^-}\approx
\big\{(u^{\C}, u^{\R})\in
\fM_{|I^+|+|I^-|+2}^{I^-}(\be_1)\!\times\!\R\fM_{|J|+1}(\be_2)\!:
u^{\C}(z^{\C})\!=\!u^{\R}(z_{\bu}^+)\big\},\EE
with the marked points of the elements of $\fM_{|I^+|+|I^-|+2}^{I^-}(\be_1)$ 
indexed by~$0$, the elements of~$I^+\!\sqcup\!I^-$, and the superscript~$\C$;
under either of the conditions~\eref{empcond_e}, one of the moduli spaces
on the right-hand side of~\eref{cNisom_e} is empty for stability reasons.
The inverse map is obtained by identifying the marked point~$z^{\C}$ of the domain of~$u^{\C}$ 
with the marked point~$z_{\bu}^+$ of the domain of~$u^{\R}$ and the marked point~$c(z^{\C})$ 
of the map $\phi\!\circ\!u^{\C}\!\circ\!c$ with $z_{\bu}^-\!=\!c(z_{\bu}^+)$; 
the marked points of~$u^{\C}$ indexed by~$I^+$ become the first points 
in the corresponding pair of the nodal map, while
those indexed by~$I^-$ become the second.
As in Section~\ref{Rmainpf_sec}, $\fM_{|I^+|+|I^-|+2}^{I^-}(\be_1)$ is oriented by twisting 
the canonical complex orientation of $\fM_{|I^+|+|I^-|+2}(\be_1)$ by $(-1)^{|I^-|}$.
The canonical orientation of~$X$ and the chosen orientations of 
$\fM_{|I^+|+|I^-|+2}^{I^-}(\be_1)$ and $\R\fM_{|J|+1}(\be_2)$
induce an orientation on each component of~$\cN_{\be}$ via the isomorphism~\eref{empcond_e}.\\

\noindent
Let $L^{\C}\!\lra\!\fM_{|I^+|+|I^-|+2}^{I^-}(\be_1)$ and 
$L^{\R}\!\lra\!\R\fM_{|J|+1}(\be_2)$ be the universal tangent line bundles 
at the marked points~$z^{\C}$ and~$z_{\bu}^+$, respectively, and
$$L=\pi_1^*L^{\C}\otimes_{\C} \pi_2^*L^{\R}\lra \cN_{\be}\,,$$
where $\pi_1,\pi_2$ are the component projection maps.
The   line bundle $L\!\lra\!\cN_{\be}$ is the normal bundle 
of $\cN_{\be}$ in~$\R\ov\fM_{k+1}(\be)$.
There is a gluing~map
\BE{RPhidfn_e}\Phi\!:U\lra \R\ov\fM_{k+1}(\be),\EE
where $U\!\subset\!L$ is a neighborhood of the zero set in~$L$;
it is obtained via a $(\phi,c)$-equivariant version of a standard gluing
construction, such as in \cite[Section~3]{LT}.\\

\noindent
If $k\!\ge\!2$ and $|J\!\cap\!\{1,2\}|\!=\!1$,
$\cN_{\be_1,\be_2;I^+,J,I^-}$ is a topological component of the pre-image of~$\mr\Ga$
under the forgetful morphism~$\R f_{012}$ in~\eref{Rforgmap_e} for some 
$\Ga\!=\!\Ga_i,\Ga_{\bar{i}}$, with $i\!=\!1,2$.
In this case, the restriction of~$L$ to $\cN_{\be_1,\be_2;I^+,J,I^-}$
equals $\R f_{012}^*L_{\Ga}$, where $L_{\Ga}\!\lra\!\Ga$ is the complex line bundle
defined in Section~\ref{DMrelR_sec}.
The gluing map~$\Phi$ in~\eref{RPhidfn_e} can be chosen so that its restriction 
to each such component $\cN_{\be_1,\be_2;I^+,J,I^-}$ lifts 
any pre-specified gluing map on~$L_{\Ga;\bh}$ over~$\R f_{012}$.

\begin{lmm}\label{Rgluing0_lmm}
If $k\!\ge\!2$ and $|J\!\cap\!\{1,2\}|\!=\!1$, 
the restriction of the gluing map~\eref{RPhidfn_e} 
to a neighborhood of $\cN_{\be_1,\be_2;I^+,J,I^-}$ in~$L$
is orientation-preserving with respect to 
the complex orientation on~$L$ and the orientation on the base described above.
\end{lmm}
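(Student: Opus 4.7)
The plan is to verify that the differential of the gluing map~\eqref{RPhidfn_e} induces an orientation-preserving isomorphism
$$T\cN_{\be_1,\be_2;I^+,J,I^-} \oplus L \lra T\R\ov\fM_{k+1}(\be)\big|_{\cN_{\be_1,\be_2;I^+,J,I^-}},$$
with $L$ carrying its complex orientation. The argument breaks into a Deligne--Mumford comparison and an index comparison, each building on constructions already set up in this paper.

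For the Deligne--Mumford comparison, I would apply~\eqref{MapsOrient2_e} on both sides: the ambient orientation on $\R\ov\fM_{k+1}(\be)$ decomposes as $\ind\,D_u^c \oplus T_{[\Si_u]}\ov\cM_{0,k+1}^c$, and similarly on each factor in the product description~\eqref{cNisom_e}. Under the forgetful map $\R f_{012}$, the stratum $\cN_{\be_1,\be_2;I^+,J,I^-}$ is the preimage of (a relabeled copy of) the stratum $\mr\Ga\!\subset\!\R\ov\cM_{0,3}^c$ of Section~\ref{DMrelR_sec}, and $L$ is pulled back from the corresponding line bundle~$L_\Ga$. The DM portion of the comparison then reduces to Lemma~\ref{DMgluing_lmm}(3), together with the evident product compatibility of the boundary stratum of $\ov\cM_{0,k+1}^c$ with $\ov\cM_{0,|I^+|+|I^-|+2} \times \ov\cM_{0,|J|+1}^c$.

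For the index comparison, there is a natural short exact sequence
$$0\lra \ind\,D_u^c \lra \ind\,D_{u^{\C}}^{\C}\oplus \ind\,D_{u^{\R}}^c \lra T_{u^{\C}(z^{\C})}X\lra 0,$$
where the right map is the difference of evaluation maps at the two sides of the conjugate pair of nodes; the codimension $2n\!=\!\dim_{\R}X$ this contributes exactly offsets the matching constraint in~\eqref{cNisom_e}. The crucial orientation claim is that the pinching orientation on $\ind\,D_u^c$ agrees with the orientation induced from this sequence, using the complex orientation of $\ind\,D_{u^{\C}}^{\C}$ and of $T_xX$, and the pinching orientation of $\ind\,D_{u^{\R}}^c$ from the same spin structure on $TX^{\phi}\!\oplus\!2E^{\ti\phi}$ (if $c\!=\!\tau$) or the same spin sub-structure on $(TX,\tnd\phi)$ (if $c\!=\!\eta$). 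This should follow from the compatibility of the pinching construction of \cite[Theorem~6.6]{Ge2} and \cite[Lemma~2.2]{Teh} with iterated degeneration: one may first pinch along a conjugate pair of interior circles to split off the bubble carrying~$z_0$ and its conjugate, and then pinch the remaining invariant component in the standard way, producing the same oriented determinant line. The twist $(-1)^{|I^-|}$ built into the orientation of $\fM_{|I^+|+|I^-|+2}^{I^-}(\be_1)$ accounts exactly for the convention that points indexed by $I^-$ appear as the \emph{second} elements of their conjugate pairs in $\R\ov\fM_{k+1}(\be)$, in the same way that analogous signs were tracked in the proof of Proposition~\ref{twistGW_prp}.

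The main obstacle is the pinching compatibility step in the index comparison, especially in the $c\!=\!\tau$ case: the two-step pinching produces a disk with boundary on~$X^\phi$ whose stable spin structure must be traced through the degeneration and identified with the one produced by the one-step pinching. Since the determinant line is continuous in the domain and map, and the connected components of $\cN_{\be_1,\be_2;I^+,J,I^-}$ are controlled by the combinatorial data $(\be_1,\be_2,I^+,J,I^-)$, it suffices to verify the orientation at a single convenient point on each component, reducing the question to a local model computation of the type carried out in \cite[Corollary~5.10]{GZ2}. Once this point is handled, assembling the DM and index comparisons yields the lemma.
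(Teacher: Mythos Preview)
Your proposal is correct and follows essentially the same approach as the paper: both arguments split the orientation comparison into a Deligne--Mumford piece handled via Lemma~\ref{DMgluing_lmm}(3) and an index piece handled via the compatibility of the pinching constructions of \cite{Ge2,Teh,GZ2} with smoothing a conjugate pair of nodes, and both track the $(-1)^{|I^-|}$ twist through the convention that $I^-$ indexes second points of conjugate pairs. The only organizational difference is that the paper first treats $k\!=\!2$ directly (where the forgetful map to $\R\ov\cM_{0,3}$ is already stable and the DM comparison is literally Lemma~\ref{DMgluing_lmm}) and then bootstraps to $k\!\ge\!3$ via a commutative square of forgetful maps, invoking Lemma~\ref{DMorient_lmm} to compare fiber orientations; you instead work at general~$k$ from the outset, which requires the additional ``product compatibility of the boundary stratum'' observation but is otherwise equivalent.
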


\begin{proof}
This follows readily from the definitions of the three orientations above;
we follow the second construction, which is described just before Lemma~\ref{MapsOrient_lmm}.
Let $\Ga$ be as in the preceding paragraph.
If $k\!=\!2$, $\Phi$ can be chosen so that there is a commutative diagram
$$\xymatrix{ U \ar[rr]^{\Phi} \ar[d]_{f_{012}} && \R\ov\fM_3(\be) \ar[d]^{f_{012}}\\
U_{\Ga} \ar[rr]^{\Phi_\Ga}&& \R\ov\cM_{0,3} }$$
with the bottom arrow being some gluing map on a neighborhood of $\mr\Ga$ in $L_{\Ga}$.
By Lemma~\ref{DMgluing_lmm}, $\Phi_{\Ga}$ is orientation-preserving.
Since all domains are stable in this case, the vertical tangent spaces of 
the vertical arrows in the diagram are oriented by orienting the indices of 
the linearized $\dbar$-operators; see \cite[Section~6]{GZ1}. \\

\noindent
The index for the complex moduli space has a canonical orientation; see \cite[p51]{MS}.
The indices for the two real moduli spaces are oriented from either 
the same trivialization of $TX^{\phi}\!\oplus\!2E^{\ti\phi}$ over a loop in~$X^{\phi}$
or from the same trivialization of~$(TX,\tnd\phi)$ over a $\Z_2$-invariant loop in~$X$
by pinching off the relevant vector bundle  onto a conjugate pair of sphere bubbles, as in 
the proofs of \cite[Lemma~7.3]{Ge2} and in \cite[Theorem~1.1]{GZ1};
the index over the first of these bubbles,~$B$, has a canonical complex orientation.
Thus, the index of an element of $\cN_{\be_1,\be_2;I^+,J,I^-}$ is oriented by introducing 
an extra pinching in~$B$ as compared to what is
used to orient nearby real maps from~$\P^1$.
This pinching, which is given by the inverse of~$\Phi$, induces the same canonical orientation 
over~$B$.
Thus, the orientation of the index for a map from~$\P^1$ is equivalent 
to the orientation obtained from the orientation of an element of~$\cN_{\be}$
by smoothing the node.\\

\noindent
If $k\!\ge\!3$, $\Phi$ can be chosen so that there is a commutative diagram
\BE{gluingforg_e2}\begin{split}
\xymatrix{U \ar[rr]^{\Phi} \ar[d] && \R\ov\fM_{k+1}(\be) \ar[d]\\
U' \ar[rr]^{\Phi'} && \R\ov\fM_3(\be) }\end{split}\EE
with the vertical arrows being forgetful maps again.
Since the fibers of the right arrow are oriented by the first points in each conjugate pair
and the orientation of the fibers of the left arrow is based on the number of 
conjugate pairs with the second point carried by~$u^{\C}$,
Lemma~\ref{DMorient_lmm} implies that 
$\Phi$ is again orientation-preserving between the fibers and
thus between the spaces on the first line of~\eref{gluingforg_e2}.
\end{proof}

\begin{rmk}\label{gluing_rmk}
The assumption that $k\!\ge\!2$ and $|J\!\cap\!\{1,2\}|\!=\!1$ in Lemma~\ref{Rgluing0_lmm}
is not necessary, but it simplifies the argument.
The general case is not needed for our purposes.
\end{rmk}

\begin{proof}[{\bf\emph{Proof of Proposition~\ref{KunnethSplit_prp}}}]
We can assume that the cohomology degrees of $\mu_0,\ldots,\mu_k$ satisfy
\BE{mucond_e}\deg\mu_0+\ldots+\deg\mu_k=\dim^{\vir}\R\ov\fM_{k+1}(\be)-2
=\blr{c_1(X),\be}+n\!-\!3+2k\,,\EE
where $2n\!=\!\dim X$.
Choose a generic collection of representatives \hbox{$h_i\!:Y_i\!\lra\!X$} 
for the Poincare duals of $\mu_0,\ldots,\mu_k$, respectively, and define
$$\R\ov\fM_{\bh}(\be)=
\big\{(u,\y)\!\in\!\R\ov\fM_{k+1}(\be)\!\times\!Y_0\!\times\!\ldots\!\times\!Y_k\!:
\ev_i(u)\!=\!h_i(y_i)~\forall\,i\!=\!0,\ldots,k\big\}.$$
If $\be\!=\!\fd(\be_1)\!+\!\be_2$ and $\{1,\ldots,k\}\!=\!I^+\!\sqcup\!J\!\sqcup\!I^-$,
let
\BE{cNhdfn_e}\cN_{\be_1,\be_2;I^+,J,I^-}(\bh)
=\cN_{\be_1,\be_2;I^+,J,I^-}\cap \R\ov\fM_{\bh}(\be).\EE
If the representatives $h_i$ for $\mu_i$ are generic, 
each set $\cN_{\be_1,\be_2;I^+,J,I^-}(\bh)$ is a compact zero-dimensional 
suborbifold of the oriented orbifold $\cN_{\be_1,\be_2;I^+,J,I^-}$
and thus has a well-defined signed weighted cardinality. 
The latter is computed by the usual Kunneth decomposition,
with respect to the specified orientations of 
$\fM_{|I^+|+|I^-|+2}^{I^-}(\be_1)$ and $\R\fM_{|J|+1}(\be_2)$;
this gives the last sum in~\eref{KunnethSplit_e} if $\be_1,\be_2\!\neq\!\eset$, but without 
the restriction  $\ga^i\!\in\!H^{2*}(X)^{\phi}_-$.
Since $\mu_i\!\in\!H^{2*}(X)$ for all~$i$, the complex GW-invariant in~\eref{KunnethSplit_e} 
with $\ga_i\!\in\!H^{2*-1}(X)$ vanishes for dimensional reasons;
the real GW-invariant in~\eref{KunnethSplit_e} with $\ga^i\!\in\!H^*(X)_+^{\phi}$ 
vanishes by Theorem~\ref{evenvan_thm}.
If $\be_1\!=\!0$ and $|I^+\!\sqcup\!I^-|\!\ge\!2$ or 
$\be_2\!=\!0$ and $|J|\!\ge\!1$, $\cN_{\be_1,\be_2;I^+,J,I^-}(\bh)\!=\!\eset$;
otherwise, the marked points on~$u^{\C}$ (in the first case) 
or on~$u^{\R}$ (in the second case) could vary while staying inside of 
the zero-dimensional $\cN_{\be_1,\be_2;I^+,J,I^-}(\bh)$.
The case $\be_1\!=\!0$ and $|I^+\!\sqcup\!I^-|\!=\!1$ 
reduces as usual to an invariant like the first term 
on the right-hand side of~\eref{KunnethSplit_e};
as described below, there is only one decomposition 
$\{1,\ldots,k\}\!=\!I^+\!\sqcup\!J\!\sqcup\!I^-$ with
$|I^+\!\sqcup\!I^-|\!=\!1$ relevant to
each of the four cases of Proposition~\ref{KunnethSplit_prp}.\\

\noindent
For any $\Ga\!\subset\!\R\ov\cM_{0,3}$, define 
\begin{equation*}\begin{split}
Z_{\Ga}&=\big\{\big(u,y_0,\ldots,y_k)\!\in\!
\R f_{012}^{-1}(\Ga)\!\times\!Y_0\!\times\!\ldots\!\times\!Y_k\!:
\ev_i(u)\!=\!h_i(y_i)~\forall\,i\!=\!0,\ldots,k\big\}\\
&= \big(\R f_{012}^{-1}(\Ga)\!\times\!Y_0\!\times\!\ldots\!\times\!Y_k\big)
\cap \R\ov\fM_{\bh}(\be) \,.
\end{split}\end{equation*}
For $j\!=\!1,2$ and generically chosen constraints~$h_i$,  
\BE{ZGajsplit_e}
Z_{\Ga_j}= \bigsqcup_{\begin{subarray}{c}\fd(\be_1)+\be_2=\be\\ 
                      \be_1,\be_2\in H_2(X),\,\be_2\neq0 \end{subarray}}\!
\bigsqcup_{\begin{subarray}{c}I^+\sqcup J\sqcup I^-=\{1,\ldots,k\}\\
  j\in I^+,\,3-j\in J\end{subarray}}
\hspace{-.4in}\cN_{\be_1,\be_2;I^+,J,I^-}(\bh);\EE
this decomposition corresponds to the first two sums in~\eref{KunnethSplit_e}
and the first term on the right-hand side of~\eref{KunnethSplit_e}.
It also holds with $(\Ga_j,j\!\in\!I^+)$ replaced by $(\Ga_{\bar{j}},j\!\in\!I^-)$.\\

\noindent
Let $L_{\Ga;\bh}\!\lra\!Z_{\Ga}$ denote the restriction~of
\BE{pbLdfn_e} \pi_1^*L= L\!\times\!Y_0\!\times\!\ldots\!\times\!Y_k \lra 
\cN_{\be}\!\times\!Y_0\!\times\!\ldots\!\times\!Y_k\,.\EE
As in complex GW-theory, a small modification of the gluing map~\eref{RPhidfn_e} gives rise 
to a gluing~map 
$$\Phi_{\Ga;\bh}\!: U_{\Ga;\bh}\lra   \R\ov\fM_{\bh}(\be),$$
where $U_{\Ga;\bh}\!\subset\!L_{\Ga;\bh}$ is a neighborhood of the zero section in~$L_{\Ga;\bh}$
(a finite collection of disks in this case).
Such a modification can be chosen to be of the form
$$\Phi_{\Ga;\bh}(u,\ups)=\Phi\big(\psi(u,\ups),\ups\big)
\qquad\forall~(u,\ups)\in U_{\Ga;\bh},$$
for some smooth function~$\psi$ on $U_{\Ga;\bh}$ sending $(u,0)$ to~$u$.
Thus, the induced map
$$\tnd(\R f_{012}\!\circ\!\Phi_{\Ga;\bh})\!: \pi_1^*\,\R f_{012}^*L_{\Ga}\lra L_{\Ga} $$
between the normal bundle of~$Z_{\Ga}$ in~$\R\ov\fM_{\bh}(\be)$
and of~$\Ga$ in~$\R\ov\cM_{0,3}$ is the identity.
Since $\Phi_{\Ga;\bh}$ is orientation-preserving by Lemma~\ref{Rgluing0_lmm},
it follows that  
every signed weighted element of~$Z_{\Ga}$ contributes~$+1$ to the number~\eref{RtiNdfn0_e}.
By the last two paragraphs, the signed weighted cardinality of~$Z_{\Ga}$ is given~by
the right-hand side of~\eref{KunnethSplit_e}.
\end{proof}

\section{Alternative proof of Theorem~\ref{main2_thm}}
\label{mainpf_sec}

\noindent
In this section, we give a proof of Theorem~\ref{main2_thm} 
(in effect of a combination of Corollary~\ref{DMrelR_crl} and Proposition~\ref{KunnethSplit_prp})
which bypasses 
the real Deligne-Mumford moduli space $\R\ov\cM_{0,3}$ of Section~\ref{DMrelR_sec}.
We instead pull back the standard relation on~$\ov\cM_{0,4}$ by 
the forgetful morphism
\BE{forgmap0_e}\begin{split}
f_{012\bar{0}}\!: \R\ov\fM_{k+1}(\be)&\lra\ov\cM_{0,4}\,,\\
\big[u,(z_0^+,z_0^-),\ldots,(z_k^+,z_k^-)\big]&\lra[z_0^+,z_1^+,z_2^+,z_0^-],
\end{split}\EE
preserving the marked points $z_0^+,z_1^+,z_2^+,z_0^-$ only (and stabilizing the domain
if necessary).\\

\noindent 
As in Section~\ref{Rmainpf_sec},
we either fix $c\!=\!\tau,\eta$ and assume that the conditions~\tauetacond{c}
and~\eref{bndcond_e} in Section~\ref{mainthm_sec} are satisfied
or assume that 
the conditions~\tauetacond{\tau} and \tauetacond{\eta}, but not necessarily~\eref{bndcond_e},
are satisfied.
In~both cases, we
continue with the abbreviations for moduli spaces of maps introduced 
in Section~\ref{Rmainpf_sec}
(before Corollary~\ref{DMrelR_crl} for the $\R$-spaces and 
before Proposition~\ref{KunnethSplit_prp} for the $\C$-spaces).
We can again assume that~\eref{mucond_e} holds and choose generic
representative $h_i\!:Y_i\!\lra\!X$ for the Poincare duals of 
$\mu_0\!\equiv\!\mu,\mu_1,\ldots,\mu_k$.\\

\noindent
Let $\Om_{0,4}\!\in\!H^2(\ov\cM_{0,4})$ be the Poincare dual of the point class
and
\BE{tiNdfn0_e}\wt{N}_{\be}^{\R}(\mu_0,\ldots,\mu_k)=
\int_{[\R\ov\fM_{k+1}(\be)]^{\vir}}f_{012\bar{0}}^*\Om_{0,4}\,
\ev_0^*\mu_0\,\ldots\,\ev_k^*\mu_k\,.\EE
For any $\la\!\in\!\cM_{0,4}$, define 
\begin{equation*}\begin{split}
Z_{\la}&=\big\{\big(u,y_0,\ldots,y_k)\!\in\!
f_{012\bar{0}}^{-1}(\la)\!\times\!Y_0\!\times\!\ldots\!\times\!Y_k\!:
\ev_i(u)\!=\!h_i(y_i)~\forall\,i\!=\!0,\ldots,k\big\}\\
&\subset\R\ov\fM_{k+1}(\be)\times Y_0\!\times\!\ldots\!\times\!Y_k\,.
\end{split}\end{equation*}
This subset is a compact oriented 0-dimensional suborbifold, i.e.~a finite set 
of weighted points, if $\la$ is generic.
The number~\eref{tiNdfn0_e} is the signed weighted cardinality $|Z_{\la}|^{\pm}$ 
of this set.\\

\noindent
We prove Theorem~\ref{main2_thm} by explicitly describing the elements of 
$Z_{[1,1]}$ and~$Z_{[1,0]}$, with notation as in Figure~\ref{m04_fig},
and determining their contribution to the number~\eref{tiNdfn0_e}.
The domain~$\Si_u$ of each element~$[u]$ of $Z_{[1,1]}$ and~$Z_{[1,0]}$ consists of 
at least two irreducible components.
If \eref{bndcond_e} holds, $\Si_u$ has an odd number of irreducible components;
the involution~$c_u$ associated with~$u$ restricts to~$c$ on one of 
the components and interchanges the others in pairs.
For dimensional reasons, the number of irreducible components of~$\Si_u$ cannot be greater
than~3 and thus must be either~2 or~3.
Each map~$u$ with its marked points is completely determined by its restriction~$u^{\R}$
to the component~$\Si_u^{\R}$ of~$\Si_u$ preserved by~$c_u$ (if the number of irreducible
components is odd) and its restriction~$u^{\C}$ to either of the other components.\\

\noindent
We depict all possibilities for the elements of $Z_{[1,1]}$ and~$Z_{[1,0]}$
in Figures~\ref{LHS0_fig} and~\ref{RHS0_fig}, respectively.
In each of the first three diagrams in these figures, 
the vertical line represents the irreducible component~$\Si_u^{\R}$ of~$\Si_u$
preserved by~$c_u$, while the two horizontal lines represent the components of~$\Si_u$
interchanged by~$c_u$;
in the last diagram in each figure, the two lines represent the components of~$\Si_u$
interchanged by~$c_u$.
The homology classes next to the lines specify the degrees of~$u$ on
the corresponding components.
The larger dots on the three lines indicate the locations of  the marked points
$z_0^+,z_1^+,z_2^+$; 
we label them by the constraints they map~to,
i.e.~$\mu,\mu_1,\mu_2$, in order to make the connection 
with the expression in Theorem~\ref{main2_thm} more apparent.
If a marked point~$z_i^+$ lies on the bottom component, 
its conjugate lies on the top component.
In such a case, we indicate the conjugate point by a small dot on 
the upper component and label it with~$\bar\mu_i$;
the restriction of~$u$ to the upper component maps this point to the image 
of~$\phi\!\circ\!h_i$.
By the definition of~$Z_{[1,1]}$, each diagram in Figure~\ref{LHS0_fig}
contains a node separating the marked points~$z_0^+,z_1^+$ 
(i.e.~the larger dots labeled by $\mu,\mu_1$) from 
the marked points~$z_2^+,z_0^-$ (i.e.~dots labeled by $\mu_2,\bar\mu$).
Similarly, each diagram in Figure~\ref{RHS0_fig} contains a node separating 
the marked points~$z_0^+,z_2^+$ from the marked points~$z_1^+,z_0^-$.
We arrange the diagrams in both cases so that the pair of marked points containing~$z_0^+$
lies above the other pair.
The remaining marked points, $z_3^{\pm},\ldots,z_k^{\pm}$, are distributed between 
the components in some way.
In the case of the first three diagrams in each figure, 
such a distribution is described by a partition of $\{1,\ldots,k\}$
into subsets $I^+,J,I^-$ of plus-decorated marked points on the top, middle, and bottom components,
respectively.\\

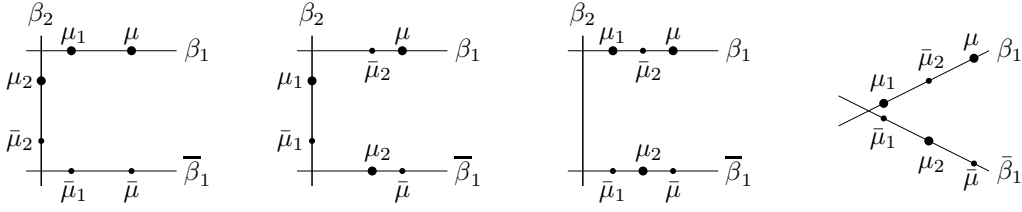
\begin{figure}
\begin{pspicture}(-.5,-.3)(10,2.8)
\psset{unit=.4cm}
\psline[linewidth=.05](3,5)(3,0)
\psline[linewidth=.02](2.5,4.5)(7.5,4.5)\psline[linewidth=.02](2.5,.5)(7.5,.5)
\pscircle*(4,4.5){.15}\pscircle*(6,4.5){.15}\pscircle*(3,3.5){.15}
\rput(4.1,5){\sm{$\mu_1$}}\rput(6,5){\sm{$\mu$}}\rput(2.3,3.5){\sm{$\mu_2$}}
\pscircle*(3,1.5){.1}\rput(2.3,1.5){\sm{$\bar\mu_2$}}
\rput(8.2,4.5){\sm{$\be_1$}}\rput(8.2,.5){\sm{$\ov\be_1$}}\rput(3,5.7){\sm{$\be_2$}}
\pscircle*(4,.5){.1}\pscircle*(6,.5){.1}
\rput(4.1,-.2){\sm{$\bar\mu_1$}}\rput(6,-.2){\sm{$\bar\mu$}}
\psline[linewidth=.05](12,5)(12,0)
\psline[linewidth=.02](11.5,4.5)(16.5,4.5)\psline[linewidth=.02](11.5,.5)(16.5,.5)
\pscircle*(15,4.5){.15}\pscircle*(12,3.5){.15}\pscircle*(14,.5){.15}
\rput(15,5){\sm{$\mu$}}
\rput(11.3,3.5){\sm{$\mu_1$}}\rput(14.2,1.1){\sm{$\mu_2$}}
\pscircle*(12,1.5){.1}\rput(11.3,1.5){\sm{$\bar\mu_1$}}
\rput(17.2,4.5){\sm{$\be_1$}}\rput(17.2,.5){\sm{$\ov\be_1$}}\rput(12,5.7){\sm{$\be_2$}}
\pscircle*(14,4.5){.1}\rput(14.2,3.8){\sm{$\bar\mu_2$}}
\pscircle*(15,.5){.1}\rput(15,-.2){\sm{$\bar\mu$}}
\psline[linewidth=.05](21,5)(21,0)
\psline[linewidth=.02](20.5,4.5)(25.5,4.5)\psline[linewidth=.02](20.5,.5)(25.5,.5)
\pscircle*(22,4.5){.15}\pscircle*(24,4.5){.15}\pscircle*(23,.5){.15}
\rput(22,5){\sm{$\mu_1$}}\rput(24,5){\sm{$\mu$}}
\rput(23.2,1.1){\sm{$\mu_2$}}
\rput(26.2,4.5){\sm{$\be_1$}}\rput(26.2,.5){\sm{$\ov\be_1$}}\rput(21,5.7){\sm{$\be_2$}}
\pscircle*(23,4.5){.1}\rput(23.2,3.8){\sm{$\bar\mu_2$}}
\rput(22,-.2){\sm{$\bar\mu_1$}}\rput(24,-.2){\sm{$\bar\mu$}}
\pscircle*(22,.5){.1}\pscircle*(24,.5){.1}
\psline[linewidth=.02](29.5,2)(34.5,4.5)\psline[linewidth=.02](29.5,3)(34.5,.5)
\pscircle*(31,2.75){.15}\pscircle*(32.5,3.5){.1}\pscircle*(34,4.25){.15}
\pscircle*(31,2.25){.1}\pscircle*(32.5,1.5){.15}\pscircle*(34,.75){.1}
\rput(33.9,4.8){\sm{$\mu$}}\rput(32.6,4.2){\sm{$\bar\mu_2$}}\rput(31,3.45){\sm{$\mu_1$}}
\rput(33.9,.2){\sm{$\bar\mu$}}\rput(32.6,.7){\sm{$\mu_2$}}\rput(31,1.55){\sm{$\bar\mu_1$}}
\rput(35.2,4.5){\sm{$\be_1$}}\rput(35.2,.5){\sm{$\bar\be_1$}}
\end{pspicture}
\caption{Domains of elements of $Z_{[1,1]}$}
\label{LHS0_fig}
\end{figure}

\begin{figure}
\begin{pspicture}(-.5,-.3)(10,2.8)
\psset{unit=.4cm}
\psline[linewidth=.05](3,5)(3,0)
\psline[linewidth=.02](2.5,4.5)(7.5,4.5)\psline[linewidth=.02](2.5,.5)(7.5,.5)
\pscircle*(4,4.5){.15}\pscircle*(6,4.5){.15}\pscircle*(3,3.5){.15}
\rput(4.1,5){\sm{$\mu_2$}}\rput(6,5){\sm{$\mu$}}\rput(2.3,3.5){\sm{$\mu_1$}}
\pscircle*(3,1.5){.1}\rput(2.3,1.5){\sm{$\bar\mu_1$}}
\rput(8.2,4.5){\sm{$\be_1$}}\rput(8.2,.5){\sm{$\ov\be_1$}}\rput(3,5.7){\sm{$\be_2$}}
\pscircle*(4,.5){.1}\pscircle*(6,.5){.1}
\rput(4.1,-.2){\sm{$\bar\mu_2$}}\rput(6,-.2){\sm{$\bar\mu$}}
\psline[linewidth=.05](12,5)(12,0)
\psline[linewidth=.02](11.5,4.5)(16.5,4.5)\psline[linewidth=.02](11.5,.5)(16.5,.5)
\pscircle*(15,4.5){.15}\pscircle*(12,3.5){.15}\pscircle*(14,.5){.15}
\rput(15,5){\sm{$\mu$}}
\rput(11.3,3.5){\sm{$\mu_2$}}\rput(14.2,1.1){\sm{$\mu_1$}}
\pscircle*(12,1.5){.1}\rput(11.3,1.5){\sm{$\bar\mu_2$}}
\rput(17.2,4.5){\sm{$\be_1$}}\rput(17.2,.5){\sm{$\ov\be_1$}}\rput(12,5.7){\sm{$\be_2$}}
\pscircle*(14,4.5){.1}\rput(14.2,3.8){\sm{$\bar\mu_1$}}
\pscircle*(15,.5){.1}\rput(15,-.2){\sm{$\bar\mu$}}
\psline[linewidth=.05](21,5)(21,0)
\psline[linewidth=.02](20.5,4.5)(25.5,4.5)\psline[linewidth=.02](20.5,.5)(25.5,.5)
\pscircle*(22,4.5){.15}\pscircle*(24,4.5){.15}\pscircle*(23,.5){.15}
\rput(22,5){\sm{$\mu_2$}}\rput(24,5){\sm{$\mu$}}\rput(23.2,1.1){\sm{$\mu_1$}}
\rput(26.2,4.5){\sm{$\be_1$}}\rput(26.2,.5){\sm{$\ov\be_1$}}\rput(21,5.7){\sm{$\be_2$}}
\pscircle*(23,4.5){.1}\rput(23.2,3.8){\sm{$\bar\mu_1$}}
\rput(22,-.2){\sm{$\bar\mu_2$}}\rput(24,-.2){\sm{$\bar\mu$}}
\pscircle*(22,.5){.1}\pscircle*(24,.5){.1}
\psline[linewidth=.02](29.5,2)(34.5,4.5)\psline[linewidth=.02](29.5,3)(34.5,.5)
\pscircle*(31,2.75){.15}\pscircle*(32.5,3.5){.1}\pscircle*(34,4.25){.15}
\pscircle*(31,2.25){.1}\pscircle*(32.5,1.5){.15}\pscircle*(34,.75){.1}
\rput(33.9,4.8){\sm{$\mu$}}\rput(32.6,4.2){\sm{$\bar\mu_1$}}\rput(31,3.45){\sm{$\mu_2$}}
\rput(33.9,.2){\sm{$\bar\mu$}}\rput(32.6,.7){\sm{$\mu_1$}}\rput(31,1.55){\sm{$\bar\mu_2$}}
\rput(35.2,4.5){\sm{$\be_1$}}\rput(35.2,.5){\sm{$\bar\be_1$}}
\end{pspicture}
\caption{Domains of elements of $Z_{[1,0]}$}
\label{RHS0_fig}
\end{figure}

\noindent
Each element~$u$ of $Z_{[1,1]}$ and $Z_{[1,0]}$ 
described by the first three diagrams in Figures~\ref{LHS0_fig}
and~\ref{RHS0_fig}, respectively, is an element~of
the subspace
\BE{fMsp_e} \cN_{\be_1,\be_2;I^+,J,I^-}(\bh)\subset 
\cN_{\be_1,\be_2;I^+,J,I^-}\times Y_0\!\times\!\ldots\!\times\!Y_k\subset 
\R\ov\fM_{k+1}(\be)\times Y_0\!\times\!\ldots\!\times\!Y_k\EE
defined in \eref{cNhdfn_e} for some $\be_1,\be_2$ and $I^+,J,I^-$
with
$\be\!=\!\fd(\be_1)\!+\!\be_2$ and $\{1,\ldots,k\}\!=\!I^+\!\sqcup\!J\!\sqcup\!I^-$.
An element~$u$ of the first space in~\eref{fMsp_e}
has a well-defined nonzero weight~$w(u)$ with respect to the orientation
of $\cN_{\be_1,\be_2;I^+,J,I^-}$ described below~\eref{cNisom_e}.
The sum of these weights over all elements~$u$ represented by a fixed diagram
with fixed $(\be_1,\be_2)$ and $(I^+,J,I^-)$ is the signed weighted cardinality 
of $\cN_{\be_1,\be_2;I^+,J,I^-}(\bh)$ computed via the usual Kunneth decomposition;
see the first paragraph in the proof of Proposition~\ref{KunnethSplit_prp}.
As an isolated element of $Z_{[1,1]}$ or $Z_{[1,0]}$,
$u$ has a well-defined contribution~$\ve(u)w(u)$ to the number~\eref{tiNdfn0_e},
i.e.~the signed number of nearby elements of~$Z_{\la}$, 
with $\la\!\in\!\cM_{0,4}$ close to $[1,1]$ or~$[1,0]$.
By Lemma~\ref{sign0_lmm} below, $\ve(u)\!=\!1$ for all elements~$u$ 
represented by the first diagrams in Figures~\ref{LHS0_fig} and~\ref{RHS0_fig}, 
$\ve(u)=-1$ for the second diagrams  in these figures,
and $\ve(u)\!=\!0$ for the third diagrams.
Even if the contributions from the third diagrams were nonzero,
they would have been the same for $Z_{[1,1]}$ and $Z_{[1,0]}$ by symmetry
and so would have had no effect on the recursion of Theorem~\ref{main2_thm}.
The reason behind Lemma~\ref{sign0_lmm} is that the oriented normal bundle of
$\cN_{\be_1,\be_2;I^+,J,I^-}$ inside $\R\ov\fM_{k+1}(\be)$ is given by
the complex line bundle of smoothings of the top node in the first three diagrams,
which is conjugate to the complex line bundle of  
smoothings of the bottom node, while 
the complex tangent bundle of~$[1,1]$ or~$[1,0]$ in~$\ov\cM_{0,4}$
corresponds to the smoothings of the node  separating $\{z_0^+,z_1^+\}$ from 
$\{z_2^+,z_0^-\}$ in the case of~$[1,1]$ and  
$\{z_0^+,z_2^+\}$ from  $\{z_1^+,z_0^-\}$ in the case of~$[1,0]$.\\
 
\noindent
The remaining elements of $Z_{[1,1]}$ and $Z_{[1,0]}$,
i.e.~those described by the last diagrams in Figures~\ref{LHS0_fig}
and~\ref{RHS0_fig}, respectively, form one-dimensional subspaces
$Z_{[1,1]}'\!\subset\!Z_{[1,1]}$ and $Z_{[1,0]}'\!\subset\!Z_{[1,0]}$;
these diagrams appear only if \eref{bndcond_e} is not satisfied.
By Lemma~\ref{sign2_lmm} below, no topological
component of $Z_{[1,1]}'$ or $Z_{[1,0]}'$ contributes 
to the number~\eref{tiNdfn0_e}.

\begin{lmm}\label{sign0_lmm}
Suppose $u\!\in\!Z_{[1,1]}$ and the domain of $u$ contains an irreducible
component~$\Si_u^{\R}$ fixed by the involution~$c_u$.
\begin{enumerate}[label=(\arabic*),leftmargin=*]
\item If $\Si_u^{\R}$ contains the marked point~$z_2^+$, $\ve(u)\!=\!1$.
\item If $\Si_u^{\R}$ contains the marked point~$z_1^+$, $\ve(u)\!=\!-1$. 
\item If $\Si_u^{\R}$ contains neither of the marked points~$z_1^+,z_2^+$, $\ve(u)\!=\!0$. 
\end{enumerate}
The same statements with~1 and~2 interchanged hold for $u\!\in\!Z_{[1,0]}$.
\end{lmm}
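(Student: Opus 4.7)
The plan is to compute $\ve(u)$ as the sign of the differential $df_{012\bar{0}}$ at~$u$ between oriented vector spaces: the oriented normal bundle of $\cN\!\equiv\!\cN_{\be_1,\be_2;I^+,J,I^-}$ in $\R\ov\fM_{k+1}(\be)$, and the tangent space $T_{[1,1]}\ov\cM_{0,4}$. By Lemma~\ref{Rgluing0_lmm}, the former is the complex line bundle~$L$ of smoothings of the ``top'' node of the domain, with its canonical complex orientation; the latter is~$\C$ with its canonical complex orientation. Since the image of $T_u\cN$ under $df_{012\bar{0}}$ lies in the tangent to the boundary stratum of $\ov\cM_{0,4}$ at~$[1,1]$, which is $0$-dimensional, only the restriction of $df_{012\bar{0}}$ to~$L$ matters. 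Thus in each case it suffices to identify which node of the stabilized image~$f_{012\bar{0}}(u)$ gets smoothed when $L$ deforms, and compare the complex structures.

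In case~(1), where $\Si_u^{\R}$ carries~$z_2$, retaining only $z_0,z_1,z_2,c(z_0)$ and stabilizing leaves the ``bottom'' conjugate component with only $c(z_0)$ and a node, so it is contracted; the top node (between the top component and~$\Si_u^{\R}$) survives as the node of the stabilized $[1,1]$ image. Hence $df_{012\bar{0}}$ identifies~$L$ with the smoothing parameter of this node by the identity, the resulting map is complex linear, and so $\ve(u)\!=\!+1$. In case~(2), with $z_1$ on~$\Si_u^{\R}$, it is the top that becomes unstable and is contracted, and the surviving node in the stabilized image is the bottom node. By $c$-equivariance, the identification of the top-node smoothing bundle~$L$ with the bottom-node smoothing bundle is the canonical anti-linear isomorphism $L\!\to\!\bar{L}$, which becomes complex conjugation $\C\!\to\!\C$ after trivialization; since complex conjugation is orientation-reversing on the underlying real 2-plane, $\ve(u)\!=\!-1$.

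In case~(3), $\Si_u^{\R}$ carries none of the marked points $z_0,z_1,z_2,c(z_0)$; after forgetting the other marked points it has only its two nodes and is unstable, so $\Si_u^{\R}$ itself is contracted entirely and the top and bottom nodes collapse to a single node in the 2-component $[1,1]$ image. Smoothing the top node by $s\!\in\!L$ forces, by $c$-equivariance, a conjugate smoothing of the bottom node by~$\bar{s}$, and the resulting neck parameter at the surviving node of~$[1,1]$ is the product $s\bar{s}\!=\!|s|^2\!\in\!\R^{\ge0}$. Thus a neighborhood of~$u$ in~$L$ maps into the real ray in $T_{[1,1]}\ov\cM_{0,4}\!\cong\!\C$, so a generic~$\la$ near~$[1,1]$ has no preimage near~$u$, and $\ve(u)\!=\!0$. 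The statements for $Z_{[1,0]}$ follow from the same analysis with $z_1$ and $z_2$ interchanged throughout, since stabilization at~$[1,0]$ places $\{z_0,z_2\}$ together and $\{z_1,c(z_0)\}$ together.

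The main technical point is the orientation comparison between cases~(1) and~(2): the sign difference stems from the fact that $c$-equivariance identifies the top-node and bottom-node smoothing bundles as complex conjugates, so switching which of them becomes the surviving node in the stabilized image reverses the orientation of the linearized gluing map $L\!\to\!T_{[1,1]}\ov\cM_{0,4}$. Carrying out this comparison cleanly requires tracking the complex structures on both sides and verifying that the pre-specified gluing of Lemma~\ref{Rgluing0_lmm} is compatible with the stabilization procedure on~$\ov\cM_{0,4}$.
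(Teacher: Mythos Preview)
Your proposal is correct and follows essentially the same approach as the paper. The paper phrases the key step for cases~(1) and~(2) by factoring $\tnd\{f_{012\bar0}\circ\Phi_{\bh}\}$ through the complex two-node smoothing $L\oplus L'$ via the embedding $\ups\mapsto(\ups,\tnd c(\ups))$ and observing that only the factor corresponding to the node separating two of $\{z_0,z_1,z_2,z_{\bar0}\}$ maps isomorphically to $T\ov\cM_{0,4}$; your ``which node survives stabilization'' description and the anti-linear identification $L\to\bar L$ amount to exactly this. For case~(3) the paper likewise writes $f_{012\bar0}$ locally as $(\ups,\ups')\mapsto a\ups\ups'$, giving $\ups\mapsto a\ups\bar\ups$ on the real locus, matching your $s\bar s$ computation.
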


\begin{proof} Let $L_{\bh}\lra Z_{[1,1]}\!-\!Z_{[1,1]}',Z_{[1,0]}\!-\!Z_{[1,0]}'$
be the restriction of the line bundle $\pi_1^*L$ defined in~\eref{pbLdfn_e}.
As in complex GW-theory, a small modification of the gluing map~\eref{RPhidfn_e} 
gives rise  to a gluing~map 
$$\Phi_{\bh}\!: U_{\bh}\lra   \R\ov\fM_{\bh}(\be),$$
where $U_{\bh}\!\subset\!L_{\bh}$ is a neighborhood of the zero section in~$L_{\bh}$,
which lifts any pre-specified family of smoothings of the domain.
Over the subsets $\cN_{\be_1,\be_2;I^+,J,I^-}(\bh)$ corresponding to 
the first two diagrams in Figures~\ref{LHS0_fig} and~\ref{RHS0_fig},
$\Phi_{\bh}$ is orientation-preserving by Lemma~\ref{Rgluing0_lmm}.
The differential 
\BE{fgldiff_e} \tnd\big\{f_{012\bar0}\!\circ\!\Phi_{\bh}\big\}\!: 
L\lra  \big\{f_{012\bar0}\!\circ\!\Phi_{\bh}\big\}^*T\cM_{0,4}\EE
is the composition of the differential for smoothing the nodes in 
$\ov\fM_{k+2}(\be)$,
$$\tnd(f_{012\bar0}\!\circ\!\Phi^{\C})\!: L\!\oplus\!L'\lra 
\big\{f_{012\bar0}\!\circ\!\Phi^{\C}\big\}^*T\cM_{0,4}\,,$$
where $L'$ is the analogue of $L$ for the second node, with the embedding
$$L\lra L\oplus L', \qquad \ups\lra \big(\ups,\tnd c(\ups)\big).$$
The restriction of the latter differential to the component, $L$ or~$L'$,
corresponding to the node separating off two of the marked points~$\{z_0^+,z_1^+,z_2^+,z_0^-\}$
is a $\C$-linear isomorphism, while the restriction to the other component is trivial.
Over the subsets $\cN_{\be_1,\be_2;I^+,J,I^-}(\bh)$ corresponding to 
the first diagrams in Figures~\ref{LHS0_fig} and~\ref{RHS0_fig},
the former component is~$L$ and \eref{fgldiff_e} is an orientation-preserving map.
Over the subsets $\cN_{\be_1,\be_2;I^+,J,I^-}(\bh)$ corresponding to 
the second diagrams in Figures~\ref{LHS0_fig} and~\ref{RHS0_fig},
the former component is~$L'$ and \eref{fgldiff_e} is an orientation-reversing map.
This establishes the first two statements of Lemma~\ref{sign0_lmm}.\\

\noindent
Near the spaces $\cN_{\be_1,\be_2;I^+,J,I^-}$ corresponding to 
the second-to-last diagrams in Figures~\ref{LHS0_fig} and~\ref{RHS0_fig}, 
the morphism 
$$f_{012\bar0}\!:\ov\fM_{k+2}(\be)\lra \ov\cM_{0,4}$$
is locally of the form 
$$L\oplus L'\lra \ov\cM_{0,4}, \qquad (\ups,\ups')\lra a\ups\ups',$$
for some $a$ dependent only on~$\cN_{\be_1,\be_2;I^+,J,I^-}$.
Thus, the restriction of~$f_{012\bar0}$ to $\R\ov\fM_{\bh}(\be)$ is locally of 
the~form
$$L\lra \ov\cM_{0,4}, \qquad \ups\lra a\ups\bar\ups\,.$$
The image of this maps is one-dimensional, 
which implies the third claim of Lemma~\ref{sign0_lmm}.
\end{proof}

\begin{lmm}\label{sign2_lmm}
The contribution of every topological component of $Z_{[1,1]}'$
and $Z_{[1,0]}'$ to the number~\eref{tiNdfn0_e} is~0.
\end{lmm}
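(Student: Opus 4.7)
The subspaces $Z'_{[1,1]}$ and $Z'_{[1,0]}$ appear only when~\eref{bndcond_e} fails, so that we are working with the glued moduli $\R\ov\fM^{\phi}$, which has no boundary by~\cite[Theorem~1.6]{Teh}. I focus on $Z'_{[1,1]}$; the argument for $Z'_{[1,0]}$ is identical by symmetry. Let $C$ be a topological component of $Z'_{[1,1]}$. Since $\R\ov\fM_{\bh}(\be)$ has no boundary and $C$ is a compact 1-dimensional sub-orbifold cut out by the constraints, $C$ is a disjoint union of circles. By the definition of $Z'_{[1,1]}$ and the fourth diagram of Figure~\ref{LHS0_fig}, $f_{012\bar{0}}$ contracts $C$ to the single point $[1,1]\in\ov\cM_{0,4}$.

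The normal bundle of $C$ in $\R\ov\fM_{\bh}(\be)$ is the real locus of the complex smoothing line bundle $L=L_{\tn{top}}\otimes_{\C}L_{\tn{bot}}$ at the unique real node, where $L_{\tn{top}}$ and $L_{\tn{bot}}$ are the tangent lines at its two preimages. Since the involution $c_u$ identifies $L_{\tn{bot}}$ anti-holomorphically with $L_{\tn{top}}$, there is a canonical isomorphism $L\cong L_{\tn{top}}\otimes_{\C}\overline{L_{\tn{top}}}$, trivialized by any Hermitian metric on~$L_{\tn{top}}$. Thus $L$ is topologically trivial as a complex line bundle, and $L^{\R}$ is trivial as a real line bundle over~$C$. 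A tubular neighborhood $U$ of~$C$ is therefore diffeomorphic to $C\times(-\delta,\delta)$, with $\ups\in\R$ running over the smoothings from both halves of the glued moduli (positive $\ups$ for one real structure, negative $\ups$ for the other). Along~$U$,
\begin{equation*}
f_{012\bar{0}}(u,\ups)=[1,1]+\ups\cdot b(u)+O(\ups^2),
\end{equation*}
where $b\colon C\to T_{[1,1]}\ov\cM_{0,4}\cong\C$ is a smooth section that is nowhere vanishing for generic~$\bh$.

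The contribution of $C$ to the number~\eref{tiNdfn0_e} equals, for $\la\in\cM_{0,4}$ generic and close to~$[1,1]$, the signed cardinality of $f_{012\bar{0}}^{-1}(\la)\cap U$. By the standard degree-of-a-map principle, this count equals the winding of $f_{012\bar{0}}|_{\prt U}$ about~$[1,1]$, where $\prt U=(C\times\{\delta\})-(C\times\{-\delta\})$ with the induced boundary orientations. The two loops
\begin{equation*}
u\longmapsto[1,1]+\delta\,b(u), \qquad u\longmapsto[1,1]-\delta\,b(u)
\end{equation*}
have identical winding $n$ about~$[1,1]$, since multiplication by~$-1$ is homotopic to the identity in~$\C^*$ and so preserves winding. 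Combined with the opposite boundary orientations, the total winding is $n-n=0$, and hence $C$ contributes~$0$ to~\eref{tiNdfn0_e}.

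The only delicate step is the triviality of $L^{\R}$, which is what guarantees that $U$ has two oppositely-oriented boundary circles rather than a single Möbius-type boundary (the latter would double the winding and give $2n$, not~$0$). This triviality, in turn, is a direct consequence of the canonical self-conjugate structure on~$L$ forced by the anti-holomorphic action of~$c_u$ on the two tangent lines at the node.
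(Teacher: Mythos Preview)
Your proof is correct and is essentially the paper's argument in different dress: both trivialize a tubular neighborhood of $C$ as $C\times(-\delta,\delta)$ and show that $f_{012\bar0}$ has local degree zero there. The paper does this by writing the map explicitly as $(u,t)\mapsto t\,z_1(u)$ and observing that the halves $t\gtrless0$ contribute opposite local degrees, while you phrase the same cancellation as equal winding numbers on oppositely oriented boundary circles; your explicit justification of the triviality of $L^{\R}$ via the self-conjugate structure $L\cong L_{\tn{top}}\otimes_{\C}\overline{L_{\tn{top}}}$ makes visible a point the paper uses without comment.
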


\begin{proof}
If $(X,\om)$ is strongly semi-positive, 
each topological component~$C$ of $Z_{[1,1]}'$ and $Z_{[1,0]}'$ is a circle.
In general, $C$ is obtained by gluing several circles along some intervals
as specified by branching of the multi-section~$\fs$ used to regularize the moduli space.
Along~$C$, $\fs$ can be represented by several single-valued sections obtained
by gluing together local representatives as in~\cite[Section~3]{FO}.
Each such section determines disjoint circles in $Z_{[1,1]}'$ or~$Z_{[1,0]}'$.  
For the purposes of studying the nearby elements of~$Z_{\la}$ that lie
in the zero set of each of these sections, it is sufficient to assume that 
each topological component~$C$ of $Z_{[1,1]}'$ and $Z_{[1,0]}'$ is the circle~$S^1$.\\

\noindent
There is a gluing~map
\BE{Phidfn0_e}\Phi\!:C\!\times\!(-\de,\de)\lra 
\bigcup_{\la\in\ov\cM_{0,4}}\!\!\!\!Z_{\la}\,\EE
for $\de\!\in\!\R^+$ sufficiently small,
which restricts to the identity along~$C\!\times\!\{0\}$;
it is obtained via a $(\phi,c)$-equivariant version of a standard gluing
construction, such as in \cite[Section~3]{LT}, with $c\!=\!\tau,\eta$.
In particular, we can normalize the elements of~$C$
by setting the marked point $z_0^+\!=\!0$ and the node to~$\i$ on one of the components of the domain
and setting $z_2^+\!=\!1$, $z_0^-\!=\!\i$, and the node to~$0$ on the other component.
For each $t\!\in\!\R^*$ sufficiently small, we can define a marked pregluing map
$u_t\!:\P^1\!\lra\!X$ with the same values at the marked points as~$u$
and with the cross-ratio~$f_{012\bar{0}}$ given~by 
$$\la= f_{012\bar{0}}(u_t)=t\,z_1^+(u)\in\C^*\subset\cM_{0,4}$$
in some chart on $\ov\cM_{0,4}$.
This map can then be deformed to an element~$\ti{u}_t$ of~$Z_{\la}$, 
with the same $\la\!\in\!\cM_{0,4}$.
Since~$C$ consists of two-bubble maps (no additional bubbling),
the gluing construction can be carried out on the entire space~$C$ 
in this case.
\\

\noindent
Let $\bar\R^+\!=\!\R^{\ge0}$ and $\bar\R^-\!=\!\R^{\le0}$.
The restriction of $f_{012\bar{0}}\!\circ\!\Phi$ to 
$C\!\times\!((-\de,\de)\!\cap\!\bar\R^{\pm})$ 
is the composition~of the~maps
\begin{alignat*}{2}
C\!\times\!((-\de,\de)\!\cap\!\bar\R^{\pm})&\lra \big\{z\!\in\!\C\!:\,|z|\!<\!\de\big\}, 
&\qquad (e^{\fI\th},t)&\lra |t|e^{\fI\th}\,,\\
\big\{z\!\in\!\C\!:\,|z|\!<\!\de\big\}&\lra\C,&\qquad
re^{\fI\th}&\lra \pm rz_1^+\big(e^{\fI\th}\big).
\end{alignat*}
The two maps, for $\bar\R^+$ and~$\bar\R^-$, described by the first line above have
opposite local degrees, while the two maps described by the second map have 
the same local degrees.
Thus, the local degree of the~map
$$f_{012\bar{0}}\!\circ\!\Phi: C\!\times\!(-\de,\de)\lra \C, \qquad 
(u,t)\lra f_{012\bar{0}}\big(\Phi(u,t)\big)=t\,z_1^+(u),$$
is zero. 
This implies
the claim.
\end{proof}

\begin{proof}[{\bf\emph{Proof of Theorem~\ref{main2_thm}}}]
We compute the number~\eref{tiNdfn0_e} by adding up the contributions 
from the elements represented by the diagrams in Figure~\ref{LHS0_fig}.
We then compute it from the diagrams in Figure~\ref{RHS0_fig}
and compare the two expressions for the number~\eref{tiNdfn0_e}.\\

\noindent
By Lemmas~\ref{sign0_lmm} and~\ref{sign2_lmm}, only the first two 
diagrams in Figure~\ref{LHS0_fig} and~\ref{RHS0_fig} contribute.
By the Kunneth decomposition, as in the first part of the proof of 
Proposition~\ref{KunnethSplit_prp}, and by Proposition~\ref{twistGW_prp},
the signed cardinality of~$\cN_{\be_1,\be_2;I^+,J,I^-}(\bh)$ is given~by
\BE{diagcontr0_e}
\!\big|\cN_{\be_1,\be_2;I^+,J,I^-}(\bh)\big|^{\pm}
=\sum_{\begin{subarray}{c}1\le i\le\ell\\ \ga^i\in H^{2*}(X)^{\phi}_-\end{subarray}}
\!\!\!\!\!\!\!\!
\blr{\mu_0,\mu_{I^+\sqcup I^-},\ga_i}_{\be_1}^X\blr{\mu_J,\ga^i}_{\be_2}^{\R},\EE
where $\lr{\ldots}^{\R}$ denotes $\lr{\ldots}^{\phi,c}$ if \eref{bndcond_e} holds
and $\lr{\ldots}^{\phi}$ otherwise.
If $\be_1\!=\!0$ and the complex invariant in~\eref{diagcontr0_e} is nonzero, 
then $|I^+\sqcup I^-|\!=\!1$ for dimensional reasons.\\

\noindent
By Lemma~\ref{sign0_lmm}(1), the contribution to the number~\eref{tiNdfn0_e} from 
the first diagram in Figure~\ref{LHS0_fig} equals the sum of~\eref{diagcontr0_e}
over all admissible $(\be_1,\be_2)$ and $(I,J)$ with $1\!\in\!I$ and $2\!\in\!J$
and all partitions of $I\!-\!\{1\}$ into two subsets~$I^+$ and~$I^-$.
By Lemma~\ref{sign0_lmm}(2), the contribution to the number~\eref{tiNdfn0_e} from 
the second diagram in Figure~\ref{LHS0_fig} equals the negative of the sum of~\eref{diagcontr0_e}
over all admissible $(\be_1,\be_2)$ and $(I,J)$ with $2\!\in\!I$ and $1\!\in\!J$
and all partitions of $I\!-\!\{2\}$ into two subsets~$I^+$ and~$I^-$.
Thus, the number~\eref{tiNdfn0_e} equals
\begin{equation*}\begin{split}
\blr{\mu\mu_1,\mu_2,\mu_3,\ldots,\mu_k}_{\be}^{\R}
-\blr{\mu_1,\mu\mu_2,\mu_3,\ldots,\mu_k}_{\be}^{\R}
+\sum_{\begin{subarray}{c}\fd(\be_1)+\be_2=\be\\ \be_1,\be_2\in H_2(X)-\{0\} \end{subarray}}
\!\!\!\sum_{I\sqcup J=\{3,\ldots,k\}}\!\!\!
\sum_{\begin{subarray}{c}1\le i\le\ell\\ \ga^i\in H^{2*}(X)^{\phi}_-\end{subarray}}
\!\!\!\!\!\!2^{|I|}\Bigg(\quad&\\ 
\blr{\mu,\mu_1,\mu_I,\ga_i}_{\be_1}^X\!\blr{\mu_2,\mu_J,\ga^i}_{\be_2}^{\R}
-\blr{\mu,\mu_2,\mu_I,\ga_i}_{\be_1}^X\!\blr{\mu_1,\mu_J,\ga^i}_{\be_2}^{\R}
&\Bigg).
\end{split}\end{equation*}
Considering the first two diagrams in Figure~\ref{RHS0_fig},
we similarly find that the number~\eref{tiNdfn0_e} equals
\begin{equation*}\begin{split}
\blr{\mu_1,\mu\mu_2,\mu_3,\ldots,\mu_k}_{\be}^{\R}
-\blr{\mu\mu_1,\mu_2,\mu_3,\ldots,\mu_k}_{\be}^{\R}
+\sum_{\begin{subarray}{c}\fd(\be_1)+\be_2=\be\\ \be_1,\be_2\in H_2(X)-\{0\} \end{subarray}}
\!\!\!\sum_{I\sqcup J=\{3,\ldots,k\}}\!\!\!
\sum_{\begin{subarray}{c}1\le i\le\ell\\ \ga^i\in H^{2*}(X)^{\phi}_-\end{subarray}}
\!\!\!\!\!\!2^{|I|}\Bigg(\quad&\\
\blr{\mu,\mu_2,\mu_I,\ga_i}_{\be_1}^X\!\blr{\mu_1,\mu_J,\ga^i}_{\be_2}^{\R}
-\blr{\mu,\mu_1,\mu_I,\ga^i}_{\be_1}^X\!\blr{\mu_2,\mu_J,\ga^i}_{\be_2}^{\R}
&\Bigg).
\end{split}\end{equation*}
Setting the two expressions equal, we obtain the formula in Theorem~\ref{main2_thm}.
\end{proof}

\section{Miscellaneous odds and ends}
\label{quant_sec}

\noindent
We begin this section by deducing Corollaries~\ref{oddnums_crl} and~\ref{main_crl} 
from Corollary~\ref{main0_crl}.
We then deduce Corollaries~\ref{inveq_crl} and~\ref{inveqCI_crl} 
from Theorems~\ref{main2_thm} and~\ref{evenvan_thm} and
relate the formula of Theorem~\ref{main2_thm} to the quantum
product on the cohomology of the symplectic manifold~$(X,\om)$.
We conclude with tables of counts of real curves in~$\P^3$, $\P^5$, and~$\P^7$
and a discussion of their compatibility.

\begin{proof}[{\bf\emph{Proof of Corollary~\ref{oddnums_crl}}}]
(1) The claim holds for $d,k\!=\!1$, since there is a unique $\phi$-real line 
through any point in~$\P^{2n-1}$.
Modulo~2, the recursion of Corollary~\ref{main0_crl} becomes
\begin{equation*}\begin{split}
&\blr{c_1,c_2,c_3,\ldots,c_k}_d^{\phi}
\cong\blr{c_1\!+\!c_2\!-\!1,c_3,\ldots,c_k}_d^{\phi}
+\sum_{\begin{subarray}{c}2d_1+d_2=d\\ d_1,d_2\ge1 \end{subarray}}
\sum_{\begin{subarray}{c}2i+j=2n-1\\ i,j\ge1 \end{subarray}}\!\!\!
\Bigg(\\
&\hspace{1in}
\blr{c_1\!-\!1,c_2,2i}_{d_1}^{\P^{2n-1}}\!\blr{c_3,\ldots,c_k,j}_{d_2}^{\phi}
+d_1\blr{c_1\!-\!1,2i}_{d_1}^{\P^{2n-1}}\!\blr{c_2,\ldots,c_k,j}_{d_2}^{\phi}\Bigg).
\end{split}\end{equation*}
For dimensional reasons, 
$$\blr{c_1\!-\!1,c_2,2i}_{d_1}^{\P^{2n-1}}=0  \quad\forall~d_1\!\ge\!2,\qquad
\blr{c_1\!-\!1,2i}_{d_1}^{\P^{2n-1}}=0 \quad\forall~d_1\!\ge\!1.$$
Thus, the mod 2 recursion reduces~to
\begin{equation*}\begin{split}
&\blr{c_1,c_2,c_3,\ldots,c_k}_d^{\phi}
\cong\blr{c_1\!+\!c_2\!-\!1,c_3,\ldots,c_k}_d^{\phi}
+\!\!\!\!\sum_{\begin{subarray}{c}2i+j=2n-1\\ i,j\ge1 \end{subarray}}\!\!\!\!\!\!\!\!
\blr{c_1\!-\!1,c_2,2i}_1^{\P^{2n-1}}\!\blr{c_3,\ldots,c_k,j}_{d-2}^{\phi}.
\end{split}\end{equation*}
If $c_1\!+\!c_2\!-\!1\!\le\!2n\!-\!1$, 
$\lr{c_1\!+\!c_2\!-\!1,c_3,\ldots,c_k}_d^{\phi}$ is odd by induction on~$k$
for a fixed $d\!\ge\!1$ odd and
$\lr{c_1\!-\!1,c_2,2i}_1^{\P^{2n-1}}\!=\!0$ for dimensional reasons.
It follows that 
$$\blr{c_1,c_2,c_3,\ldots,c_k}_d^{\phi} \cong1\mod2$$
in this case.
If $c_1\!+\!c_2\!-\!1\!>\!2n\!-\!1$, then 
$$d\ge3, \qquad \lr{c_1\!+\!c_2\!-\!1,c_3,\ldots,c_k}_d^{\phi}=0,$$
and $\lr{c_1\!-\!1,c_2,2i}_1^{\P^{2n-1}}\!=\!0$ if 
$c_1\!+\!c_2\!+\!2i\!\neq\!4n$.
Since the linear span of general $\P^{2n-1-(c_1-1)}$ and $\P^{2n-1-c_2}$,
with 
$$1\le c_1,c_2 \le 2n\!-\!1  \qquad\hbox{and}\qquad c_1\!+\!c_2> 2n,$$ 
in~$\P^{2n-1}$ is a $\P^{4n-c_1-c_2}$, it intersects a general 
$\P^{2n-1-(4n-c_1-c_2)}$ in a single point.
This point lies on the unique line passing through linear subspaces
of~$\P^{2n-1}$ of codimensions $c_1,c_2,2i$ whenever  
$c_1\!+\!c_2\!+\!2i\!=\!4n$.
Thus,
$$\blr{c_1,c_2,c_3,\ldots,c_k}_d^{\phi} \cong
\blr{c_3,\ldots,c_k,2n\!-\!1-(4n\!-\!c_1\!-\!c_2)}_{d-2}^{\phi}
\mod2$$
in this case; the last number is odd by the induction on~$d$.\\

\noindent
(2) The second claim of Corollary~\ref{oddnums_crl} follows from the first and 
\cite[Theorem~1.8]{Teh}; 
the latter is contained in Corollary~\ref{inveqCI_crl} and Theorem~\ref{evenvan_thm}.
\end{proof}

\begin{proof}[{\bf\emph{Proof of Corollary~\ref{main_crl}}}]
The first statement follows immediately from Corollary~\ref{main0_crl} 
and implies~that 
$$N_d^{\R}\cong_4\begin{cases}1,&\hbox{if}~d\in\Z^+\!-\!2\Z;\\
0,&\hbox{if}~d\in2\Z^+\,.
\end{cases}$$
We use simultaneous induction on the degree $d$ to show that 
$$N_d^{\C}\cong_4\begin{cases}1,&\hbox{if}~d\in\Z^+\!-\!2\Z;\\
0,&\hbox{if}~d\in2\Z^+\,;
\end{cases}
\quad\hbox{and}\qquad
\wt{N}_d^{\C}\cong_4\begin{cases}1,&\hbox{if}~d\in\Z^+\!-\!2\Z~\hbox{or}~d\!=\!2;\\
2,&\hbox{if}~d\!=\!4;\\
0,&\hbox{if}~d\in2\Z^+\!-\!\{2,4\}\,;
\end{cases}$$
from the base case $N_1^{\C}\!=\!1$ (the number of lines through 2 points in~$\P^3$).
By \cite[Theorem~10.4]{RT},
\BE{RTrec_e}\begin{split}
N_d^{\C}&=\sum_{\begin{subarray}{c}d_1+d_2=d\\ d_1,d_2\ge1\end{subarray}}
\!\!\Bigg(\!d_2^2\binom{2d\!-\!3}{2d_1\!-\!2}
  -d_1d_2\binom{2d\!-\!3}{2d_1\!-\!1}\!\!\!\Bigg)\wt{N}_{d_1}^{\C}N_{d_2}^{\C}\,,\\
\wt{N}_d^{\C}&=dN_d^{\C}+
\sum_{\begin{subarray}{c}d_1+d_2=d\\ d_1,d_2\ge1\end{subarray}}\!\!
\Bigg(\!d_1d_2^2\binom{2d\!-\!2}{2d_1\!-\!1}
  -d_2^3\binom{2d\!-\!2}{2d_1\!-\!2}\!\!\!\Bigg)\wt{N}_{d_1}^{\C}N_{d_2}^{\C}\,.
\end{split}\EE
By \eref{RTrec_e}, $\ti{N}_1^{\C},N_3^{\C}\!=\!1$ and $\wt{N}_3^{\C}\!=\!5$.
Modulo~4, the summands  in~\eref{RTrec_e} with~$d_2$ even  vanish
(by Corollary~\ref{oddnums_crl}, $N_{d_2}^{\C}\!\in\!2\Z$ if $d_2\!\in\!2\Z$).
Thus, by the inductive assumption only the summands with $d_1\!=\!2,4$ 
may be nonzero in either sum in~\eref{RTrec_e} with $d\!\ge\!5$ odd.
These two summands contribute $d\!-\!2$ and $d\!-\!3$, respectively,
i.e.~1 together, to the first sum.
They contribute $d\!-\!1$ and $0$, respectively,
i.e.~again~1 together with the term $dN_d^{\C}$, to the second sum.\\

\noindent
For $d\!\in\!2\Z^+$, \eref{RTrec_e} and the inductive assumptions give
\BE{RTrec_e2}\begin{split}
N_d^{\C}&\cong_4\!\!
\sum_{\begin{subarray}{c}d_1+d_2=d\\ d_1,d_2\ge1\tn{~odd}\end{subarray}}
\!\!\!\!\!\Bigg(\!\!\!\binom{2d\!-\!3}{2d_1\!-\!2}
  -(d\!-\!1)\binom{2d\!-\!3}{2d_1\!-\!1}\!\!\!\Bigg)
= (2\!-\!d)\!\!\sum_{\begin{subarray}{c}d_1+d_2=d\\ d_1,d_2\ge1\tn{~odd}\end{subarray}}
\!\!\!\!\!\binom{2d\!-\!3}{2d_1\!-\!1} \,,\\
\wt{N}_d^{\C}&\cong_4\!\!
\sum_{\begin{subarray}{c}d_1+d_2=d\\ d_1,d_2\ge1\tn{~odd}\end{subarray}}\!\!
\!\!\!\Bigg(\!d_1\binom{2d\!-\!2}{2d_1\!-\!1}
  -d_2\binom{2d\!-\!2}{2d_1\!-\!2}\!\!\!\Bigg)
  =\frac12\sum_{\begin{subarray}{c}d_1+d_2=d\\ d_1,d_2\ge1\tn{~odd}\end{subarray}}\!\!
\!\!\!\binom{2d\!-\!2}{2d_1\!-\!1}\,.
\end{split}\EE
By symmetry, the last expression on the first line above equals
$$\frac{2\!-\!d}{2}
\!\!\sum_{\begin{subarray}{c}d_1+d_2=d\\ d_1,d_2\ge1\tn{~odd}\end{subarray}}
\!\!\!\!\!\Bigg(\!\!\binom{2d\!-\!3}{2d_1\!-\!1}\!+\!
\binom{2d\!-\!3}{2d_2\!-\!1}\!\!\Bigg)
=\frac{2\!-\!d}{2}
\!\!\sum_{\begin{subarray}{c}d_1+d_2=d\\ d_1,d_2\ge1\tn{~odd}\end{subarray}}
\!\!\!\!\!\binom{2d\!-\!2}{2d_1\!-\!1}\,.$$
Each of the last binomial coefficients is even.
If in addition $d\!\in\!4\Z$, these coefficients come in pairs:
the one for $d_1$ and $d\!-\!d_1$ are the same.
This shows that $N_d^{\C}\!\in\!4\Z$ if $d\!\in\!2\Z$.\\

\noindent
By~\eref{RTrec_e2}, $\wt{N}_2^{\C}\!\cong_4\!1$ and $\wt{N}_4^{\C}\!\cong_4\!2$.
Suppose $d\!=\!2(d'\!+\!1)$ with $d'\!\ge\!2$, $d_1\!=\!2d_1'\!+\!1$, and 
$d_2\!=\!2d_2'\!+\!1$ (so that $d_1'\!+\!d_2'\!=\!d'$).
By Kummer's Theorem, 
the highest power of~$2$ that divides half of the last binomial coefficient 
in~\eref{RTrec_e2} is the number~$c_2(d_1',d_2')$ of carries in the addition 
of $d_1'$ and~$d_2'$ modulo~2.
The pairs $(d_1',d_2')$ for which $c_2(d_1',d_2')\!=\!0$ are obtained from~$d'$ 
by distributing the 1's in the binary representation of~$d'$ between~$d_1'$ and~$d_2'$.
Thus, the number of such pairs~$(d_1',d_2')$ is $2^{\#}$, where $\#$ is the number
of 1's in the binary representation of~$d'$.
Since $d_1'\!\neq\!d_2'$ for such pairs,
the contribution from $(d_1',d_2')$ and $(d_2',d_1')$ to
the last expression in~\eref{RTrec_e2}, including the half factor, is~2 modulo~4.
Thus, the contribution from all such pairs to the last expression in~\eref{RTrec_e2}
is~$2^{\#}$.
The contribution from any other pair $(d_1',d_2')$ is divisible by~2, 
since $c_2(d_1',d_2')\!\ge\!1$, and such pairs come in pairs giving 
the same contribution to~\eref{RTrec_e2}, unless $d_1'\!=\!d_2'$.
If $d_1'\!=\!d_2'$ and thus $d'\!\in\!2\Z^+$, $c_2(d_1',d_2')\!=\!1$ if and only 
if $\#\!=\!1$.
Thus, if $d'\!\in\!2\Z^+$, the total contribution to~\eref{RTrec_e2} from the terms with 
$c_2(d_1',d_2')\!=\!0$ and the term with $d_1'\!=\!d_2'$ is~0 modulo~4.
If $d'\!\not\in\!2\Z^+$, $\#\!\ge\!2$, since $d'\!\ge\!2$, and so
this contribution is still~0 modulo~4. 
This shows that $\wt{N}_d^{\C}\!\in\!4\Z$ if $d\!\in\!2\Z$ and $d\!\ge\!6$.
\end{proof}

\begin{proof}[{\bf\emph{Proof of Corollary~\ref{inveq_crl}}}]
We can assume that $X$ is connected and thus $H^0(X)^{\phi}_-\!=\!\{0\}$.
By Gromov's compactness theorem, we can rescale~$\om$ so that 
$$\inf\!\big\{\om(\be)\!: \be\!\in\!H_{\eff}(X)_{\phi}\big\}=1.$$
We prove the claim by induction on the~number 
$$\lr{\be}_k\equiv \om(\be)\!+\!k\in(1,\i)
\qquad\forall\,k\!\in\!\Z^+,\,\be\!\in\!H_{\eff}(X)_{\phi}\,.$$
Let $\{\ga_i\}_{i\le\ell}$ and $\{\ga^i\}_{i\le\ell}$ be as in Theorem~\ref{main2_thm}.
By the divisor relation,
\BE{inveq_e5b}\begin{split}
\blr{\mu_1,\ldots,\mu_k}_{\!\be}^{\!\phi,c}&=\lr{\mu_2,\be}
\blr{\mu_1,\mu_3,\ldots,\mu_k}_{\!\be}^{\!\phi,c}, \\
\blr{\mu_1,\ldots,\mu_k}_{\!\be}^{\!\phi}&=\lr{\mu_2,\be}
\blr{\mu_1,\mu_3,\ldots,\mu_k}_{\!\be}^{\!\phi}
\end{split}\qquad\forall\,\mu_2\!\in\!H^2(X),\EE
whenever these invariants are defined, $\be\!\neq\!0$, and $k\!\ge\!2$.\\

\noindent
By the linearity of real genus~0 GW-invariants and 
Theorem~\ref{evenvan_thm}\ref{van0_it}, 
it is sufficient to construct the maps~$P_{\be;\be'}$ on the direct sum 
of $(H^{2*}(X)^{\phi}_-)^{\otimes k}$ so that these maps satisfy~\eref{inveq_e0}
with $H^{2*}(X)$ replaced by $H^{2*}(X)^{\phi}_-$.
For \hbox{$\be,\be'\!\in\!H_{\eff}(X)_{\phi}$}, define
\begin{gather*}
P_{\be';\be}\!: H^{2*}(X)^{\phi}_-\lra H^{2*}(X)^{\phi}_-,\qquad
P_{\be';\be}(\mu)=\begin{cases}\mu,&\hbox{if}~\be'\!=\!\be;\\
0,&\hbox{if}\,\be'\!\neq\!\be;\end{cases}\\
P_{\be';\be}\!=\!0\!:
\bigoplus_{k=1}^{\i}\big(H^{2*}(X)^{\phi}_- \big)^{\otimes k} \lra H^{2*}(X)^{\phi}_-
\qquad\hbox{if}\quad \be\!-\!\be'\not\in H_{\eff}(X)_{\phi}\!\cup\!\{0\}\,. 
\end{gather*}
These linear maps satisfy the $k\!=\!1$ case of~\eref{inveq_e0} under 
the assumptions in~\ref{inveq_it1}
and of~\eref{inveq_e0}  with $\lr{\cdot}^{\phi,c}$ replaced by~$\lr{\cdot}^{\phi}$
under the assumptions in~\ref{inveq_it2}.\\

\noindent
Suppose $M\!\in\!\Z^+$ and for every pair $(k,\be)$ in $\Z^+\!\times\!H_{\eff}(X)_{\phi}$
with \hbox{$\lr{\be}_k\!<\!M$}  there exist linear maps
\BE{inveq_e9}P_{\be';\be}\!: \big(H^{2*}(X)^{\phi}_-\big)^{\otimes k}\lra H^{2*}(X)^{\phi}_-
\qquad\hbox{with}\quad \be'\!\in\!H_{\eff}(X)_{\phi}\EE
that satisfy~\eref{inveq_e0} under the assumptions in~\ref{inveq_it1}
and~\eref{inveq_e0}  with $\lr{\cdot}^{\phi,c}$ replaced by~$\lr{\cdot}^{\phi}$
under the assumptions in~\ref{inveq_it2}.
Let $(k,\be)$ be a pair in $\Z^+\!\times\!H_{\eff}(X)_{\phi}$ such that
$$k\!>\!1  \qquad\hbox{and}\qquad    
M\le\lr{\be}_k< M\!+\!1.$$
Choose a basis~$\cB_k$~for $(H^{2*}(X)^{\phi}_-)^{\otimes k}$
consisting of products of homogeneous elements 
(each factor~$\mu_i$ lies in $H^c(X)^{\phi}_-$ for  some $c\!\in\!2\Z^+$).\\

\noindent
Let $\mu_1\!\otimes\!\ldots\!\otimes\!\mu_k\!\in\!\cB_k$.
By the divisibility assumption, there exist 
$\mu\!\in\!H^{2*}(X)_+^{\phi}$ and $\mu_2'\!\in\!H^2(X)_-^{\phi}$ such that 
$\mu_2\!=\!\mu\mu_2'$.
For each $\be'\!\in\!H_{\eff}(X)_{\phi}$ such that 
$\be\!-\!\be'\!\in\!H_{\eff}(X)_{\phi}\!\cup\!\{0\}$, define
\begin{equation*}\begin{split}
P_{\be';\be}(\mu_1,\ldots,\mu_k)
=\lr{\mu_2',\be}P_{\be';\be}\big(\mu\mu_1,\mu_3,\ldots,\mu_k\big)+
\sum_{\begin{subarray}{c}\fd(\be_1)+\be_2=\be\\ 
\fd(\be_1),\be_2\in H_{\eff}(X)_{\phi}\end{subarray}}
\!\!\!\sum_{I\sqcup J=\{3,\ldots,k\}}\!\!\!
\sum_{\begin{subarray}{c}1\le i\le\ell\\ \ga^i\in H^{2*}(X)^{\phi}_-\end{subarray}}
\!\!\!\!\!\!\!\!\!\!\!2^{|I|}\!\Bigg(\quad&\\
\blr{\mu,\mu_1,\mu_I,\ga_i}_{\be_1}^X
\!P_{\be';\be_2}\!\big(\mu_2',\mu_J,\ga^i\big)
-\blr{\mu,\mu_2',\mu_I,\ga_i}_{\be_1}^X
\!P_{\be';\be_2}\big(\mu_1,\mu_J,\ga^i\big)&\Bigg).
\end{split}\end{equation*}
The values of $P_{\be';\be}$ and $P_{\be';\be_2}$ above are well-defined because
$$\lr{\be}_{k-1}=\lr{\be}_k\!-\!1<M, \quad
\lr{\be_2}_{|J|+2}=\lr{\be}_k-\om\big(\fd(\be_1)\big)-|I|
\le\lr{\be}_k\!-\!1 <M\,.$$

\vspace{.2in}

\noindent
By~\eref{inveq_e5b}, the equation 
in Theorem~\ref{main2_thm} with $\mu_2$ replaced by~$\mu_2'$ is equivalent~to
\begin{equation*}\begin{split}
\blr{\mu_1,\mu_2,\mu_3,\ldots,\mu_k}_{\be}^{\phi,c}
=\lr{\mu_2',\be}\blr{\mu\mu_1,\mu_3,\ldots,\mu_k}_{\be}^{\phi,c}
+\sum_{\begin{subarray}{c}\fd(\be_1)+\be_2=\be\\ \fd(\be_1),\be_2\in H_{\eff}(X)_{\phi} \end{subarray}}
\!\!\!\sum_{I\sqcup J=\{3,\ldots,k\}}\!\!\!
\sum_{\begin{subarray}{c}1\le i\le\ell\\ \ga^i\in H^{2*}(X)^{\phi}_-\end{subarray}}
\!\!\!\!\!\!\!\!\!\!\!2^{|I|}\!\Bigg(\quad&\\
\blr{\mu,\mu_1,\mu_I,\ga_i}_{\be_1}^X
\!\blr{\mu_2',\mu_J,\ga^i}_{\be_2}^{\phi,c}
-\blr{\mu,\mu_2',\mu_I,\ga_i}_{\be_1}^X
\!\blr{\mu_1,\mu_J,\ga^i}_{\be_2}^{\phi,c}&\Bigg)
\end{split}\end{equation*}
under the assumptions in~\ref{inveq_it1}
and  with $\lr{\cdot}^{\phi,c}$ replaced by~$\lr{\cdot}^{\phi}$
under the assumptions in~\ref{inveq_it2}.
Along with the assumption on~\eref{inveq_e9}, this implies that the elements
$P_{\be';\be}(\mu_1,\ldots,\mu_k)$ of $H^{2*}(X)^{\phi}_-$ satisfy~\eref{inveq_e0}
under the assumptions in~\ref{inveq_it1}
and~\eref{inveq_e0}  with $\lr{\cdot}^{\phi,c}$ replaced by~$\lr{\cdot}^{\phi}$
under the assumptions in~\ref{inveq_it2}.
This completes the inductive step of the~proof.
\end{proof}

\begin{proof}[{\bf\emph{Proof of Corollary~\ref{inveqCI_crl}}}]
By the discussion above the statement of this corollary and 
immediately after~\tauetacond{\tau} and~\tauetacond{\eta}
in Section~\ref{mainthm_sec}, the real genus~0 GW-invariants
$\lr{\ldots}_d^{\phi}$ and $\lr{\ldots}_d^{\phi,c}$ of~$(X,\phi)$ are defined
in all cases considered in the statement of Corollary~\ref{inveqCI_crl}.
By the sentence below~\eref{dimcong_e}, we can assume that the complex dimension
of~$X$ is odd.
By the Lefschetz Theorem on Hyperplane Sections \cite[p156]{GH} and Poincare Duality,
$H^{2*}(X)$ is then generated by~$H^2$ over~$\Q$ and 
$H^{4*}(X)\!=\!H^{4*}(X)^{\phi}_+$.
In light of Theorem~\ref{evenvan_thm}, this implies that 
the real genus~0 degree~$d$ GW-invariants of $(X,\om_n|_X,\phi)$
with any insertion $\mu_j\!\in\!H^{4*}(X)$ vanish.\\

\noindent
If $\phi_{\P^{n-1}}\!=\!\eta_n$ and $c\!=\!\tau$, the moduli spaces
in~\eref{fMbeX_e} are empty for any $J\!\in\!\cJ_{\om}^{\phi}$ 
because $X^{\phi}\!=\!\eset$.
The same is the case if ($\tau_n\eta$) holds because 
the involution~$\tau_n$ lifts to an involution~$\wt\tau_n$ on
the line bundle $\cO_{\P^{n-1}}(1)$,
a real degree~$d$ map from $(\P^1,\eta)$ to $(\P^{n-1},\tau_n)$ pulls back 
$(\cO_{\P^{n-1}}(1),\wt\tau_n)$ to a degree~$d$ line bundle over~$\P^1$
with an involution lifting~$\eta$,
and only even-degree line bundles over~$\P^1$ admit such lifts. 
This establishes the vanishing claim if either $\phi_{\P^{n-1}}\!=\!\eta_n$ and $c\!=\!\tau$
or ($\tau_n\eta$) holds.
The assumption that the degrees of~$\mu_j$ are even is not necessary in these~cases.\\

\noindent
By the real version of Quantum Lefschetz Hyperplane Theorem (as in \cite[Proposition~7.7]{Ge2}), 
the real genus~0 degree~$d$ GW-invariants of $(X,\om_n|_X,\phi)$ with insertions
$\mu_j\!=\!H^{c_j}$ for some $c_j\!\in\!\Z^{\ge0}$
are equal to the real genus~0 GW-invariants
of $(\P^{n-1},\phi_{\P^{n-1}})$ twisted by the Euler class of a vector bundle.
If either $a_i\!\in\!2\Z$ for some~$i$ or $d\!\in\!2\Z$ and $\ell\!\in\!\Z^+$,
then this bundle contains a subbundle of odd rank and
the invariants of $(X,\om_n|_X,\phi)$ vanish.\\

\noindent
Suppose $d\!\in\!2\Z$ and $\ell\!=\!0$, i.e.~$X_{n;\a}\!=\!\P^{n-1}$.
If $[\P^1]$ is the generator of $H_2(\P^{n-1})$, then
$$c_{\min}^{\phi}\big(d[\P^1]\big)=2n>(\dim_{\R}\!X)/2+1=n.$$
Thus, the real genus~0 degree~$d$ GW-invariants of $(\P^{n-1},\om_n,\phi_{\P^{n-1}})$
with $d\!\in\!2\Z$ vanish by the last statement of Corollary~\ref{inveq_crl2}.
This concludes the proof of Corollary~\ref{inveqCI_crl}\ref{CIvan_it}.\\

\noindent
It remains to establish Corollary~\ref{inveqCI_crl}\ref{CIred_it}.
We assume~that 
$$\dim_{\C}\!X=n\!-\!1\!-\!\ell\ge0.$$
Along with the assumption on~$|\a|$, this implies that $n\!>\!|\a|$.
By Corollary~\ref{inveqCI_crl}\ref{CIvan_it} and the reasoning above, 
we can also assume that $a_i\!\not\in\!2\Z$ for every~$i$, $d\!\not\in\!2\Z$,
and $X$ is odd-dimensional.
The last assumption implies that $H^{2*}(X)$ is generated by $H^2(X)^{\phi}_-$
as an algebra over~$\Q$, \hbox{$H_2(X)\!=\!H_2(X)_{\phi}$} is one-dimensional,
and $(X,\om_n|_X,\phi)$ is a real Fano symplectic manifold.
Along with the middle assumption, it implies~that 
\begin{equation*}\begin{split}
c_{\min}^{\phi}\big(d[\P^1]\big)&=n\!-\!|\a|=\lr{\a}_n-\big((\dim_{\R}\!X)/2\!-\!1\big),\\
c_+^{\phi}\big(d[\P^1]\big)&=3\big(n\!-\!|\a|\big)>n\!-\!\ell
=(\dim_{\R}\!X)/2+1.
\end{split}\end{equation*}
Corollary~\ref{inveqCI_crl}\ref{CIred_it} thus follows from 
the first statement of Corollary~\ref{inveq_crl2} and
the linearity of real genus~0 GW-invariants.
\end{proof}

\noindent
Analogously to the situation in complex GW-theory, 
Theorem~\ref{main2_thm} is related to the quantum cohomology of~$(X,\om)$.
Let~$(X,\om,\phi)$ be a real symplectic manifold. 
Suppose that either
\begin{enumerate}[label=(C\arabic*),leftmargin=*]

\item\label{Rinv_it} the conditions
\tauetacond{\tau} and \tauetacond{\eta} in Section~\ref{mainthm_sec} hold or 

\item\label{cinv_it} $c\!\in\!\{\tau,\eta\}$ is fixed and \tauetacond{c} holds.

\end{enumerate}
In the first case, let 
$$H_2(X)_{\phi}^{\star}=H_2(X)_{\phi}\!-\!\{0\}\,.$$
In the second case, let $H_2(X)_{\phi}^{\star}$ be as above if $X^{\phi}\!=\!\eset$ 
and $H_2(X)_{\phi}\!-\!\Im(\fd)$ if $X^{\phi}\!\neq\!\eset$.
For each $\be\!\in\!H_2(X)_{\phi}^{\star}$,
denote by $\lr{\ldots}_{\be}^{\phi}$ the real invariant~\eref{phinums_e2}
in the case~\ref{Rinv_it} and the real invariant~\eref{phinums_e} 
in the case~\ref{cinv_it}.\\

\noindent
Choose bases $\{\ga_i\}_{i\le\ell}$ and $\{\ga^i\}_{i\le\ell}$ for $H^*(X)$ so that 
$$\PD_{X^2}(\De_X)=\sum_{i=1}^{\ell}\ga_i\!\times\!\ga^i\in H^*(X^2),$$
as before.
Let $q$ denote
the formal variable in the Novikov ring~$\La$ on $H_2(X;\Z)$ and set
$$\wt\La=\La[q^{1/2}], \quad
\wt{Q}H^*(X)= H^*(X)\otimes \wt\La, \quad
\wt{Q}H^*(X)_{\pm}^{\phi}= H^*(X)_{\pm}^{\phi}\otimes \wt\La= 
QH(X)^{\phi}_{\pm}[q^{1/2}]\,;$$
see \cite[Section~11.1]{MS}.
We define a homomorphism of modules over~$\ti\La$~by
$$\fR_{\phi}\!: \wt{Q}H^*(X)\lra \wt{Q}H^*(X)^{\phi}_{(-1)^{n+1}}, \quad
\fR_{\phi}\mu=\sum_{\be\in H_2(X)_{\phi}^{\star}}
\sum_{i=1}^{\ell}
\blr{\mu,\ga_i}_{\be}^{\phi}\ga^iq^{\be/2}
\quad\forall~\mu\!\in\!H^*(X)\,,$$
where $2n\!=\!\dim X$.
By Theorems~\ref{evenvan_thm} and~\ref{main2_thm}, 
$$\fR_{\phi}\mu=0\quad\forall~\mu\in \wt{Q}H^*(X)^{\phi}_+
\qquad\hbox{and}\qquad
\fR_{\phi}\mu_1*\mu_2=\mu_1*\fR_{\phi}\mu_2\quad\forall~\mu_1,\mu_2\in \wt{Q}H^*(X)^{\phi}_-\,,$$
respectively, where $*$ is the quantum product.
If in addition $\lr{c_1(X),\be}\!\in\!2\Z$ for all $\be\!\in\!H_2(X)$ that can
be represented by $J$-holomorphic spheres for a generic $J\!\in\!\cJ_{\om}^{\phi}$, then 
$$\fR_{\phi}\mu_-*\mu_+=\fR_{\phi}\big(\mu_-*\mu_+\big)
\qquad\forall~\mu_-\in\wt{Q}H^*(X)^{\phi}_-\,, \mu_+\in\wt{Q}H^*(X)^{\phi}_+\,;$$
this can be seen by an argument similar to the proof of Proposition~\ref{twistGW_prp}.\\

\begin{table}
\begin{center}
\begin{tabular}{||c|c||}
\hline\hline
$d$& $N_d^{\R}$\\
\hline
1& 1\\
\hline
3& 1\\
\hline
5& 5\\
\hline
7& 85\\
\hline
9& 1993\\
\hline
11& 136457\\
\hline
13& 3991693\\
\hline
15& 1580831965\\
\hline
17& -129358296175\\
\hline
19& 106335656443537\\
\hline
21& -39705915765949931\\
\hline
23& 27364388694945255653 \\
\hline
25&  -19263282511829476981415\\
\hline
27& 17458116427845844069499545\\
\hline
29&  -18101279473337469331178336611\\
\hline
31& 22138019795038729862257691515501\\
\hline\hline
\end{tabular}
\end{center}
\caption{The number $N_d^{\R}$ of degree~$d$ real rational curves 
through $d$ non-real points in~$\P^3$.}
\label{P3nums_tbl}
\end{table}

\noindent
We conclude with some counts of real curves in~$\P^3$, $\P^5$, and~$\P^7$;
see Tables~\ref{P3nums_tbl} and~\ref{Pnnums_tbl}.
These numbers are consistent with basic algebro-geometric 
considerations \cite[p177]{GH}.
\begin{enumerate}[label=(\arabic*),leftmargin=*]
\item Every degree~1 curve lies in a~$\P^1$, 
every non-real point~$p$ in $\P^{2n-1}$ determines a real $\P^1\!\subset\!\P^{2n-1}$,
and a real line passing through~$p$ lies in this~$\P^1$.
Thus, $N_1^{\R}$, $\lr{5^13^0}_1^{\tau_5}$, and $\lr{7^15^03^0}_1^{\tau_7}$
should equal~1, at least in the absolute value.

\item Every degree~3 curve lies in a~$\P^3$, 
every two general non-real points~$p_1$ and~$p_2$ in $\P^{2n-1}$ determine 
a real $\P^3\!\subset\!\P^{2n-1}$, for $n\!\ge\!2$, and
a real degree~3 curve passing through~$p_1$ and~$p_2$ lies in this~$\P^3$.
Thus, a real degree~3 curve in~$\P^5$ passing through two general points~$p_1$ and~$p_2$
and a general plane~$\pi$ lies in the real~$\P^3$ determined by these 
two points and passes through the point $\pi\!\cap\!\P^3$; so the number 
$\lr{5^23^1}_3^{\tau_5}$ should equal~$N_3^{\R}$, at least in the absolute value.
By the same reasoning, the number $\lr{7^25^03^1}_3^{\tau_7}$
should also equal~$N_3^{\R}$.

\item Every degree~5 curve lies in a~$\P^5$,
every three non-real points~$p_1$, $p_2$, and~$p_3$ in $\P^7$ determine 
a real~$\P^5$,  and
a real degree~5 curve passing through~$p_1$, $p_2$, and~$p_3$ lies in this~$\P^5$.
Thus, the numbers $\lr{7^35^13^0}_5^{\tau_7}$ and $\lr{7^35^03^2}_5^{\tau_7}$
should equal 
$\lr{5^43^0}_5^{\tau_5}$ and $\lr{5^33^2}_5^{\tau_5}$, respectively.
\end{enumerate}

\begin{table}
\begin{center}
\begin{tabular}{||c|c|c||}
\hline\hline
$d$& cond& $\lr{5^a3^b}_d^{\tau_5}$\\
\hline
1& $5^1 3^0$& 1\\
\hline
1& $5^0 3^2$& 1\\
\hline
3& $5^2 3^1$& -1\\
\hline
3& $5^1 3^3$& -3\\
\hline
3& $5^0 3^5$& -5\\
\hline
5& $5^4 3^0$& 1\\
\hline
5& $5^3 3^2$& 1\\
\hline
5& $5^2 3^4$& -7\\
\hline
5& $5^1 3^6$& 93\\
\hline
5& $5^0 3^8$& 12417\\
\hline
7& $5^5 3^1$& -23\\
\hline
7& $5^4 3^3$& -213\\
\hline
7& $5^3 3^5$& -2679\\
\hline
7& $5^2 3^7$& -23001\\
\hline
7& $5^1 3^9$& 874089\\
\hline
7& $5^0 3^{11}$& 90271011\\
\hline
9& $5^7 3^0$& 21\\
\hline
9& $5^6 3^2$& -503\\
\hline
9& $5^5 3^4$& -16399\\
\hline
9& $5^4 3^6$& -394863\\
\hline
9& $5^3 3^8$& -6924579\\
\hline
9& $5^2 3^{10}$& 69060873\\
\hline
9& $5^1 3^{12}$& 19824606009\\
\hline
9& $5^0 3^{14}$& 1811570349393\\
\hline\hline
\end{tabular}
\hspace{1in}
\begin{tabular}{||c|c|c||}
\hline\hline
$d$& cond& $\lr{7^a5^b3^c}_d^{\tau_7}$\\
\hline
1& $7^1 5^0 3^0$& 1\\
\hline
1& $7^0 5^1 3^1$& 1\\
\hline
1& $7^0 5^0 3^3$& 1\\
\hline
3& $7^2 5^0 3^1$& -1\\
\hline
3& $7^1 5^2 3^0$& -1\\
\hline
3& $7^1 5^1 3^2$& -3\\
\hline
3& $7^1 5^0 3^4$& -5\\
\hline
3& $7^0 5^3 3^1$& -3\\
\hline
3& $7^0 5^2 3^3$& -1\\
\hline
3& $7^0 5^1 3^5$& 89\\
\hline
3& $7^0 5^0 3^7$& 1155\\
\hline
5& $7^3 5^1 3^0$& 1\\
\hline
5& $7^3 5^0 3^2$& 1\\
\hline
5& $7^2 5^2 3^1$& -3\\
\hline
5& $7^2 5^1 3^3$& -27\\
\hline
5& $7^2 5^0 3^5$& -175\\
\hline
5& $7^1 5^4 3^0$& -11\\
\hline
5& $7^1 5^3 3^2$& -71\\
\hline
5& $7^1 5^2 3^4$& -239\\
\hline
5& $7^1 5^1 3^6$& 2181\\
\hline
5& $7^1 5^0 3^8$& 75405\\
\hline
5& $7^0 5^5 3^1$& -55\\
\hline
5& $7^0 5^4 3^3$& 349\\
\hline
5& $7^0 5^3 3^5$& 20589\\
\hline
5& $7^0 5^2 3^7$& 438481\\
\hline
5& $7^0 5^1 3^9$& 7937169\\
\hline
5& $7^0 5^0 3^{11}$& 139758309\\
\hline\hline
\end{tabular}
\end{center}
\caption{The numbers $\lr{5^a3^b}_d^{\tau_5}$ and $\lr{7^a5^b3^c}_d^{\tau_7}$
of degree~$d$ real rational curves  through $a$~non-real points and 
$b$~non-real planes in~$\P^5$
and through $a$ non-real points, $b$ non-real planes, and 
$c$ non-real linear $\P^4$'s in~$\P^7$, respectively.}
\label{Pnnums_tbl}
\end{table}

\clearpage

\vbox{
\noindent
{\it Department of Mathematics, Princeton University, Princeton, NJ 08544}\\
{\it Current address: Institut de Math\'ematiques de Jussieu - Paris Rive Gauche,
Universit\'e Pierre et Marie Curie,  4~Place Jussieu,
75252 Paris Cedex 5, France\\
penka.georgieva@imj-prg.fr}\\

\noindent
{\it Department of Mathematics, Stony Brook University, Stony Brook, NY 11794\\
azinger@math.stonybrook.edu}}

\end{document}